\numberwithin{equation}{section}
\numberwithin{equation}{subsection}
\newtheorem{thm}{Theorem}[section]
\newtheorem{proposition}[thm]{Proposition}
\newtheorem{corollary}[thm]{Corollary}
\newtheorem{lemma}[thm]{Lemma}
\newtheorem*{remark*}{Remark}
\newtheorem{remark}[thm]{Remark}
\newcommand{\ind}{{\text{Ind}}}
\newcommand{\Hom}{{\mathrm{Hom}}}
\newcommand{\tra}{{\text{tra}}}
\newcommand{\res}{{\text{res}}}
\newcommand{\Ho}{{\mathrm{H}}}
\newcommand{\bZ}{{\mathbb Z}}
\newcommand{\bC}{{\mathbb C}}
\newcommand{\bb}[1]{\mathbb{#1}}
\newcommand{\fpp}{\mathbb{F}_p[t]/(t^2)}
\newcommand{\fpk}{{\mathbb{F}_p[t]/(t^2-k)}}
\newcommand{\hfpp}{\bb{H}_3\big(\fpp\big)}
\newcommand{\hfpk}{\bb{H}_3\big(\fpk\big)}
\newcommand{\zpp}{\mathbb{Z}/p^2 \bb{Z}}
\newcommand{\hzpp}{\bb{H}_3\big(\zpp\big)}
\newcommand{\irr}{\mathrm{Irr}}
\title[]{Projective representations of  Heisenberg \\ groups over the rings of order  $p^2$}
\author{Sumana Hatui}
\address{ SH \& EKN: Department of Mathematics,
	Indian Institute of Science,
	Bangalore 560012, India. }
\email{sumanahatui@iisc.ac.in}
\author{E. K. Narayanan}
\email{naru@iisc.ac.in}
\author{Pooja Singla}
\address{ PS: Department of Mathematics and Statistics, Indian Institute of Technology Kanpur, Kanpur 208016, India. }
\email{psingla@iitk.ac.in}
\begin{document}

\begin{abstract}
In this article we describe the 2-cocycles, Schur multiplier and representation group of discrete Heisenberg groups over the unital rings of order $p^2$. We describe all projective representations of Heisenberg groups with entries from the rings $\mathbb Z/p^2\mathbb Z$ and $ \fpp$ and obtain a classification of their degenerate and  non-degenerate 2-cocycles.
\end{abstract}

\subjclass[2010]{20C25, 20G05, 20F18}
\keywords{Schur multiplier, Projective representations, Representation group, Heisenberg group over rings}

\maketitle

\section{Introduction}
The theory of projective representations of finite groups  was first studied by Schur in a series of papers \cite{IS4,IS7,IS11}.
A projective representation  of a group $G$ is a homomorphism from $G$ to projective general linear group $\mathrm{PGL}(V)$, where $V$ is a complex vector space. So $\rho:G \to \mathrm{GL}(V)$ is a map such that $\rho(1)=id_V$ and  there is a $2$-cocycle $\alpha:G \times G \to \mathbb C^\times$ satisfying  $$\rho(xy)=\alpha(x,y)\rho(x)\rho(y).$$ Then, we say $\rho$ is an $\alpha$-representation.

 For cyclic groups the irreducible projective representations are same as the ordinary representations (up to equivalence) and hence one dimensional. However, this is not true in general for abelian groups. For abelian groups, these have been studied by several authors in  \cite{RJ}, \cite{Moa}, \cite{Moa2}.

Recently, the first named and third named authors studied the projective representations of the discrete Heisenberg group over cyclic rings in \cite{PS}. This paper may be considered as a continuation of \cite{PS}, where we study the projective representations of discrete Heisenberg groups over the unital rings $R$ of order $p^2,$ where $p$ is an odd prime. The discrete Heisenberg group of rank one over $R$, denoted by $\mathbb{H}_{2n+1}(R)$, is the set $ R \times R^n \times R^n$ with the multiplication given by
\[
\begin{array}{l}
(c_1, b_1, \ldots, b_n, a_1, \ldots, a_n) (c'_1, b'_1, b'_2, \ldots, b'_n, a'_1, a'_2, \ldots, a'_n) \\
= (c_1+c'_1+ \sum_{i=1}^n a_i*b_i', a_1 + b'_1, \ldots, b_n + b'_n, b_1 + a'_1, \ldots, a_n + a'_n),
\end{array}
\]
where $a*b$ denotes the multiplication in $R$.

Now onwards, we consider $R$ to be a ring of order $p^2$ with unity. It is easy to prove that any such ring $R$ is commutative and  is isomorphic to one of the following:

\begin{itemize}

\item $\mathbb Z/p^2\mathbb Z,$

\item $\mathbb Z/p\mathbb Z \times \mathbb Z/p\mathbb Z,$

\item $\fpp,$

\item $\mathbb{F}_{p^2}.$
\end{itemize}
 We remark that the results in \cite{PS} take care of the case $\mathbb H_{2n+1}(R) $ for $n > 1.$ Indeed, one of the main results in \cite{PS} is that, for $n>1,$ every irreducible projective representation of $\mathbb{H}_{2n+1}(R)$ (where $R$ is any commutative ring) is obtained from an irreducible projective representation of the abelian group $R^{2n}$ via inflation. Therefore, in this article we will focus on the case $n = 1,$ that is on $\mathbb H_3(R)$.

In the study of the projective representations of a finite group $G$, the main ingredients are to describe the Schur multiplier of $G$, to determine a representation group $G^\star$ of $G$ and then describe the ordinary representations of $G^\star$. We refer the reader to \cite{Karpilovsky, GK85} for any unexplained terms or notation in this article.
Now, let $R$ be one of the rings mentioned above, of order $p^2.$ Main results obtained in this article can be summarized as follows:

\begin{itemize}

\item[(a)] Description of Schur multiplier $\Ho^2\big(\mathbb{H}_{3}(R),\bC^\times\big),$
\item[(b)] Explicit description of $2$-cocycles of $\mathbb H_3(R),$
\item[(c)] Construction of a representation group $G^{\star}$ for $\mathbb H_3(R)$ and its irreducible representations,
\item[(d)] Description of non-degenerate $2$-cocycles of $\mathbb{H}_{3}\big(\fpp \big)$ and $\mathbb{H}_{3}(\bZ/p^2\bZ).$
\end{itemize}

We end this section by providing the statements of the results:

\begin{thm} \label{SchurM}
Suppose $p$ is an odd prime and $R$ be a unital ring of order $p^2$, where $p$ is an odd prime. Then
		\[
		\Ho^2\big(\mathbb{H}_{3}(R),\bC^\times\big)=
		\begin{cases}
		(\bZ/p\bZ)^{8} & \text{if } R \cong \fpp,\\
		(\bZ/p\bZ)^{8} & \text{if } R \cong \mathbb{F}_{p^2},\\
	        (\bZ/p\bZ)^{8} & \text{if }  R \cong \mathbb Z/p\mathbb Z \times \mathbb Z/p\mathbb Z,\\
		(\bZ/p^2\bZ)^{2} & \text{if } R\cong \mathbb Z/p^2\mathbb Z.\\
		\end{cases}
		\]
	\end{thm}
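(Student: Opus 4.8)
The plan is to exploit the central extension
$1 \to R \to \mathbb{H}_3(R) \to R^2 \to 1$,
in which the copy of $R$ in the first coordinate is exactly the centre, the quotient is the abelian group $R^2$, and the commutator pairing is the $R$-bilinear symplectic form $\beta\big((b,a),(b',a')\big)=ab'-a'b\in R$. In particular $\mathbb{H}_3(R)$ has nilpotency class $2$, and since $p$ is odd the three rings of characteristic $p$ yield a group of exponent $p$ while $\bZ/p^2\bZ$ yields a group of exponent $p^2$; in every case the class is $2<p$.

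Because the class is smaller than $p$, I would pass through the Lazard correspondence to the associated Heisenberg Lie ring $\mathfrak h=\mathfrak h_3(R)$ over the relevant base ($\mathbb F_p$ for the first three rings, $\bZ/p^2\bZ$ for the last), under which the Schur multiplier is identified with the second Lie-ring homology, $\Ho^2(\mathbb{H}_3(R),\bC^\times)\cong \Ho_2(\mathfrak h)$. I would then compute $\Ho_2(\mathfrak h)$ from the Chevalley--Eilenberg complex, i.e. as $\ker\big(\partial_2\colon \wedge^2\mathfrak h\to\mathfrak h\big)\big/\operatorname{im}\big(\partial_3\colon\wedge^3\mathfrak h\to\wedge^2\mathfrak h\big)$, where $\partial_2(x\wedge y)=[x,y]$ and $\partial_3(x\wedge y\wedge z)=[x,y]\wedge z-[x,z]\wedge y+[y,z]\wedge x$.

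For the three characteristic-$p$ rings, write $\mathfrak h=V\oplus W$ with $V\cong R^2$ (dimension $4$) and centre $W\cong R$ (dimension $2$), the bracket being $\beta$. Since $W$ is central, $\partial_2$ vanishes on $(V\otimes W)\oplus\wedge^2 W$ and equals $\beta$ on $\wedge^2 V$, so (as $\beta$ is onto $W$) $\ker\partial_2$ has dimension $(6-2)+8+1=13$. Likewise $\operatorname{im}\partial_3$ splits as the image of $\wedge^3 V\to V\otimes W$ together with the full line $\wedge^2 W$. The decisive, ring-dependent point is that $\partial_3|_{\wedge^3 V}$ is injective of rank $4$: this I would verify directly from the structure constants $e_ie_j$ of $R$ in each of the cases $\fpp$, $\mathbb F_{p^2}$, and $\bZ/p\bZ\times\bZ/p\bZ$. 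Thus $\dim\operatorname{im}\partial_3=4+1=5$ and $\Ho_2(\mathfrak h)=13-5=8$, giving $(\bZ/p\bZ)^8$. For $R\cong\bZ/p^2\bZ$ the ring $\mathfrak h$ is free of rank $3$ on $z,a,b$ with $[a,b]=z$; here $\partial_2(a\wedge b)=z$ generates a free summand, so $\ker\partial_2=\langle a\wedge z,\,b\wedge z\rangle\cong(\bZ/p^2\bZ)^2$, while $\partial_3(a\wedge b\wedge z)=z\wedge z=0$ forces $\operatorname{im}\partial_3=0$, yielding $(\bZ/p^2\bZ)^2$.

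As an independent check on the split case I would use that $\mathbb{H}_3(R_1\times R_2)\cong\mathbb{H}_3(R_1)\times\mathbb{H}_3(R_2)$ together with the formula $M(A\times B)\cong M(A)\oplus M(B)\oplus\big(A^{\mathrm{ab}}\otimes B^{\mathrm{ab}}\big)$ and the classical value $M(\mathbb{H}_3(\mathbb F_p))\cong(\bZ/p\bZ)^2$ for the extraspecial group of order $p^3$ and exponent $p$: this returns $(\bZ/p\bZ)^2\oplus(\bZ/p\bZ)^2\oplus(\bZ/p\bZ)^4=(\bZ/p\bZ)^8$. The main obstacle, and the step I would be most careful about, is the justification of the identification $\Ho^2(\mathbb{H}_3(R),\bC^\times)\cong\Ho_2(\mathfrak h)$ at the integral level; should the Lazard route be inconvenient, the same numerics can be reproduced intrinsically through Hopf's formula $M(G)\cong (F'\cap S)/[F,S]$ for a free presentation $G=F/S$, where the commutator calculus in the free class-two quotient reproduces exactly the maps $\partial_2,\partial_3$ above.
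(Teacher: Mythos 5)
Your plan has one genuine gap, and it sits exactly where you flagged it: the identification $\Ho^2\big(\mathbb{H}_3(R),\bC^\times\big)\cong \Ho_2(\mathfrak h)$ of the Schur multiplier with the Chevalley--Eilenberg homology of the Lazard Lie ring is assumed, not proved, and it is not a routine step. The standard way to establish such a transfer is to match stem extensions (covering groups) across the correspondence; but a covering group of a class-$2$ group has nilpotency class $3$ (indeed the representation group $G^\star$ constructed in this paper for $\hfpp$ has class exactly $3$), so one needs the Lazard correspondence in class $3$, i.e.\ $p\geq 5$. The theorem is asserted for all odd primes, and at $p=3$ your route breaks down: the group itself is within Baer/Lazard range, but its stem extensions are not. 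Your fallback --- Hopf's formula plus the remark that ``the commutator calculus in the free class-two quotient reproduces exactly the maps $\partial_2,\partial_3$'' --- is a restatement of what must be proven, not a proof; carried out rigorously for $p$-groups of class $2$ with elementary abelian $G/G'$, it is precisely the theorem of Blackburn and Evens that the paper invokes (Proposition~\ref{blackburn}). Note also that the Blackburn--Evens formula contains a term invisible in your Lie-algebra picture: the subspace $U_2$ spanned by the elements $\bar g\otimes g^p$, which vanishes here only because these groups have exponent $p$; this is exactly why the $\bZ/p^2\bZ$ case cannot be treated by the same $\mathbb F_p$-linear algebra and is handled in the paper by citing Urban.

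That said, your linear algebra is the paper's computation in disguise, and the numerics are all correct. Your $\partial_3(\wedge^3 V)$ is literally Blackburn--Evens' subspace $U_1$ spanned by the Jacobi elements $\bar g_1\otimes[g_2,g_3]+\bar g_2\otimes[g_3,g_1]+\bar g_3\otimes[g_1,g_2]$; your claim that $\partial_3|_{\wedge^3 V}$ is injective of rank $4$ is the paper's computation $|U|=p^4$ (which the paper carries out explicitly from the presentations, while you defer it ``to be verified''); and your count $13-5=8$ matches $p^{12}/|U|=p^8$, with Lemma~\ref{lemma: elementary-abelian-shur-multiplier} then pinning down the isomorphism type $(\bZ/p\bZ)^8$ rather than just the order. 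Your cross-check for $R\cong\bZ/p\bZ\times\bZ/p\bZ$, via $\mathbb{H}_3(R_1\times R_2)\cong\mathbb{H}_3(R_1)\times\mathbb{H}_3(R_2)$ and Schur's direct-product formula (Theorem~\ref{directproduct}), is a complete proof of that case and coincides with the paper's. The repair is therefore clear: for the two exponent-$p$ cases $\fpp$ and $\mathbb{F}_{p^2}$ replace the Lazard/Chevalley--Eilenberg framing by a direct appeal to Blackburn--Evens, keep the direct-product argument for $\bZ/p\bZ\times\bZ/p\bZ$, and cite (or reprove separately) the multiplier of $\mathbb{H}_3(\bZ/p^2\bZ)$; with that substitution your computations go through verbatim for every odd $p$, including $p=3$.
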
	
The proof of this result is given in Section \ref{Schur-Multiplier}. For $\mathbb{H}_3(\bZ/p^2\bZ)$, this result follows from  \cite[Theorem 1.1]{Urban}. For the remaining cases, we mainly use the results of Blackburn and Evens \cite{Blackburn}. Since $G = \mathbb H_3(R)$ is a $p$-group of nilpotency class $2$ with the property that $G/G'$ (where $G'$ is the commutator subgroup of $G$) is an elementary abelian group, we can apply the main results from \cite{Blackburn}.
Our next result is the description of the 2-cocycles of the group $\mathbb H_3(R).$  We prove the following for $\mathbb{H}_{3}\big(\fpp\big)$.

\begin{thm} \label{2cocycle}
	Up to cohomologous, every $2$-cocycle $\alpha$ of $\mathbb{H}_{3}\big(\fpp\big)$ is of the following form:
	\begin{eqnarray*}
		&&\alpha\big(x_1^{a_1}x_2^{a_2}y_1^{b_1}y_2^{b_2}z_1^{c_1}z_2^{c_2}, x_1^{a_1'}x_2^{a_2'}y_1^{b_1'}y_2^{b_2'}z_1^{c_1'}z_2^{c_2'}\big) =\mu_1^{a_2a_1'} \mu_2^{b_2a_1'}\mu_3^{b_2a_2'} \mu_4^{b_2 b_1'}  \mu_5^{b_1{a_1' \choose 2}+a_1'c_1}\\
		&&\hspace{5cm}  \mu_6^{b_2{a_1' \choose 2}+a_1'c_2+a_2'c_1} \mu_7^{ a_1' {b_1\choose 2}-b_1c_1'} \mu_8^{a_2'{b_1\choose 2}-b_2c_1'-b_1c_2'},
	\end{eqnarray*}
	where  $\mu_i \in \bC^\times$ such that $\mu_i^p=1$.	
\end{thm}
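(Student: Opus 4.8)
The plan is to leverage the order $\left|\Ho^2\big(\mathbb{H}_3(\fpp),\bC^\times\big)\right| = p^8$ supplied by Theorem \ref{SchurM}: I would construct eight explicit, cohomologically independent cocycle classes and then close the argument by a counting bijection. First I would fix the presentation of $G=\mathbb{H}_3(\fpp)$ on the generators $x_1,x_2,y_1,y_2,z_1,z_2$, where $x_i,y_i,z_i$ are the images of the $\bZ/p\bZ$-basis $\{1,t\}$ of $\fpp$ in the $a$-, $b$- and central $c$-slots respectively. The relevant relations are $[x_1,y_1]=z_1$, $[x_1,y_2]=[x_2,y_1]=z_2$, $[x_2,y_2]=1$, with $z_1,z_2$ central of order $p$ and every generator of order $p$, giving the unique normal form $x_1^{a_1}x_2^{a_2}y_1^{b_1}y_2^{b_2}z_1^{c_1}z_2^{c_2}$. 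The indispensable input is the multiplication law in these coordinates: the non-central exponents simply add, while the central exponents $c_1(gh),c_2(gh)$ pick up commutator corrections of the shape $a_1'b_1$ and $a_1'b_2+a_2'b_1$ obtained by collecting the $x$'s of $h$ past the $y$'s of $g$. Getting the signs of these corrections right (they depend on the commutator convention and the chosen order of the normal form) is what later makes the displayed exponents into genuine cocycles.

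Next I would split $\Ho^2$ along the central extension $1\to Z\to G\to \bar G\to 1$ with $Z=G'=\langle z_1,z_2\rangle$ and $\bar G=G/Z\cong(\bZ/p\bZ)^4$. The inflation--restriction sequence identifies the image of $\inf\colon\Ho^2(\bar G,\bC^\times)\to\Ho^2(G,\bC^\times)$ with $\Ho^2(\bar G,\bC^\times)/\mathrm{im}(\tau)$, where the transgression $\tau\colon\widehat Z\to\wedge^2\widehat{\bar G}$ is dual to the commutator map $\partial\colon\wedge^2\bar G\to Z$. Since $\partial$ sends $\bar x_1\wedge\bar y_1\mapsto z_1$ and both $\bar x_1\wedge\bar y_2,\ \bar x_2\wedge\bar y_1\mapsto z_2$, a one-line linear-algebra computation gives $\mathrm{im}(\tau)=\langle\, \bar x_1^\ast\wedge\bar y_1^\ast,\ \bar x_1^\ast\wedge\bar y_2^\ast+\bar x_2^\ast\wedge\bar y_1^\ast\,\rangle$. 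The four alternating forms underlying $\mu_1,\dots,\mu_4$, which pair $(x_1,x_2),(x_1,y_2),(x_2,y_2),(y_1,y_2)$, are then independent modulo $\mathrm{im}(\tau)$ and so form a basis of this $4$-dimensional inflated part; note this is exactly how the factor $\mu_2$ (whose pair $(x_1,y_2)$ does \emph{not} commute) is still detected, namely as a nonzero class in the quotient rather than by a commutator pairing on $G$. These four factors of the formula are manifestly $2$-cocycles, being inflations of bilinear forms on $\bar G$.

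The heart of the argument is the complementary $4$-dimensional part carried by $\mu_5,\dots,\mu_8$, which I would detect by the cohomology invariant $\theta_\alpha(z,g)=\alpha(z,g)/\alpha(g,z)$ for $z\in Z$; this is well defined because $Z$ is central and is unchanged by coboundaries. Direct evaluation reads off $\mu_5=\theta(z_1,x_1)$, $\mu_6=\theta(z_2,x_1)=\theta(z_1,x_2)$, $\mu_7=\theta(z_1,y_1)$ and $\mu_8=\theta(z_1,y_2)=\theta(z_2,y_1)$, so these four classes are mutually independent and independent of $\mu_1,\dots,\mu_4$ (for which $\theta\equiv1$). The main obstacle is verifying that the displayed exponents really are cocycles: each of $\mu_5,\dots,\mu_8$ couples a central-mixing term such as $a_1'c_1$ with a binomial term such as $b_1\binom{a_1'}{2}$. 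Using the multiplication law, the cobounding defect of a bare term $c_1(g)a_1(h)$ is a cubic expression $\pm a_1'a_1''b_1$, while the binomial term contributes the opposite cubic defect by virtue of $\binom{a'+a''}{2}=\binom{a'}{2}+\binom{a''}{2}+a'a''$ together with $\binom{(a'+a'')\bmod p}{2}\equiv\binom{a'+a''}{2}\pmod p$; the two cancel precisely, and the analogous bookkeeping disposes of $\mu_6,\mu_7,\mu_8$. I expect the delicate part to be confirming that each binomial correction is attached to the correct coordinate and sign so that it kills the cubic defect produced by the commutator corrections to $c_1(gh),c_2(gh)$.

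Finally, the two detections above show the assignment $(\mu_1,\dots,\mu_8)\mapsto[\alpha]$ is injective: applying $\theta$ forces $\mu_5=\cdots=\mu_8=1$ on any coboundary, after which the surviving inflated part is trivial in $\Ho^2(G,\bC^\times)$ only if its class lies in $\mathrm{im}(\tau)$, forcing $\mu_1=\cdots=\mu_4=1$. Hence the family realizes $p^8$ pairwise non-cohomologous classes; since $\Ho^2\big(\mathbb{H}_3(\fpp),\bC^\times\big)$ has order exactly $p^8$ by Theorem \ref{SchurM}, the assignment is a bijection, and therefore every $2$-cocycle of $\mathbb{H}_3(\fpp)$ is cohomologous to exactly one member of the displayed family, as claimed.
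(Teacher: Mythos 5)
Your overall architecture is sensible and is in fact close to the paper's own: the paper likewise splits $\Ho^2(G,\bC^\times)$ into a part inflated from $G/G'$ (your $\mu_1,\dots,\mu_4$) and a complementary part detected by the Iwahori--Matsumoto map $\chi([\alpha])(gG'\otimes a)=\alpha(g,a)\alpha(a,g)^{-1}$, which is exactly your invariant $\theta$; your identification of $\ker(\inf)=\mathrm{im}(\tau)$ and your final count against $|\Ho^2(G,\bC^\times)|=p^8$ from Theorem \ref{SchurM} are both legitimate. The genuine gap is in the step you yourself call the heart of the argument: the claim that the displayed exponents are $2$-cocycles and that ``the analogous bookkeeping disposes of $\mu_6,\mu_7,\mu_8$.'' It does not. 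With $E_7(g,g')=a_1'\binom{b_1}{2}-b_1c_1'$ and the group law $c_1(gg')=c_1+c_1'+b_1a_1'$, the cocycle defect is
\[
E_7(g,g')+E_7(gg',g'')-E_7(g,g'g'')-E_7(g',g'')=2\,b_1b_1'a_1''\not\equiv 0 \pmod p,
\]
where $b_1,b_1',a_1''$ are coordinates of $g,g',g''$. Concretely, for the triple $(y_1,y_1,x_1)$ and $\mu_7\neq 1$ with all other $\mu_i=1$, one gets $\alpha(y_1,y_1)\alpha(y_1^2,x_1)=\mu_7$ but $\alpha(y_1,y_1x_1)\alpha(y_1,x_1)=\mu_7^{-1}$, so the $2$-cocycle identity fails. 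The cancellation mechanism you describe is asymmetric: the correction $b_1a_1'$ in $c_1(gg')$ couples the \emph{first} factor's $y$-coordinate to the \emph{second} factor's $x$-coordinate, so the binomial defect $-b_1a_1'a_1''$ cancels the defect of $a_1'c_1$ (your $\mu_5$ case is correct) but \emph{reinforces} the defect of $-b_1c_1'$ (the $\mu_7,\mu_8$ cases), while for $\mu_6$ the cross terms $b_1(a_1''a_2'+a_2''a_1')$ survive. Flipping your commutator convention only moves the failure from $\{\mu_7,\mu_8\}$ to $\{\mu_5,\mu_6\}$; no choice of signs makes all eight exponents cocycles simultaneously.

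This cannot be repaired inside your framework as stated: since the exponents attached to $\mu_6,\mu_7,\mu_8$ are not cocycles at all, they represent no cohomology class, so neither the $\theta$-detection nor the counting bijection can get started with these representatives. The paper's proof takes a different route precisely here: it realizes the classes $\mu_5,\dots,\mu_8$ as transgressions $\tra(f)(g,g')=f\bigl(s(g)s(g')s(gg')^{-1}\bigr)$ from the explicit class-three central extension $\tilde{G}$ of order $p^{10}$ in \eqref{tildeG}, which are cocycles by construction; the resulting exponents carry additional cubic correction terms, namely $k_6+b_1a_1'a_2'$, $k_7-b_1^2a_1'$ and $k_8-b_1(b_1a_2'+b_2a_1')$, and it is exactly these corrections that cancel the defects above (e.g.\ the defect of $b_1^2a_1'$ is also $2b_1b_1'a_1''$). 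These corrections are not coboundaries (they are not even cocycles), so they cannot simply be dropped; a correct version of your family must use the corrected exponents, or representatives cohomologous to them such as $-a_1'\binom{b_1}{2}-b_1c_1'$ in the $\mu_7$ slot. With that replacement, your inflation-part analysis, the $\theta$-detection, and the count against $p^8$ do assemble into a valid proof.
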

Here $x_1^{a_1}x_2^{a_2}y_1^{b_1}y_2^{b_2}z_1^{c_1}z_2^{c_2}$ denotes a general element in the group $\mathbb{H}_{3}\big(\fpp\big).$ See subsection \ref{subsection-presentation} for more details.
The description of $2$-cocycles of $\mathbb{H}_{3}(\bZ/p^2\bZ )$ has already appeared in \cite{PS}.
For other groups, the description of $2$-cocycles is given in Section \ref{section-cocycle-description}.


  As is well known, the projective representations of a group are obtained from ordinary representations of its representation group. To study the projective representations of $\mathbb H_3(R)$ we construct a representation group of $\mathbb H_3(R).$ In this direction, we have the following result for $\mathbb{H}_{3}\big(\fpp\big)$. For a group $G$ and $x,y \in G$, the commutator $x^{-1}y^{-1}xy$ is denoted by $[x,y]$. Whenever we write a presentation of a group, we assume all the commutators $[x,y]$ for generators $x,y$,  which are not explicitly stated in the presentation, are trivial.

\begin{thm}\label{representation-group}
A representation group of $G = \mathbb{H}_{3}\big(\fpp\big)$ is given by
\begin{eqnarray*}
	G^\star&=&\langle  x_i,y_i , i=1,2 \mid [y_1,x_1]=z_1,  [y_1,x_2]=z_2, [x_2,x_1]=w_1,[y_2,x_1]z_2^{-1}=w_2,
	\\
	&&  [y_2,x_2]=w_3,[y_2,y_1]=w_4,  [z_1, x_i]=v_i,  [z_1, y_i]=u_i, [z_2,x_1]=[z_1, x_2],   \\
	&&  [z_2,y_1]=[z_1, y_2], x_i^p=y_i^p=z_i^p=w_j^p=1, 1 \leq j \leq 4  \rangle.
\end{eqnarray*}
\end{thm}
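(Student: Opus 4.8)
The plan is to verify directly that the presented group $G^\star$ is a \emph{stem cover} of $G = \mathbb{H}_{3}(\fpp)$, leaning on the already-established value $\Ho^2(G,\bC^\times)\cong(\bZ/p\bZ)^8$ from Theorem \ref{SchurM}. Recall that a representation group of $G$ is a central extension $1\to A\to G^\star\to G\to 1$ with $A\subseteq Z(G^\star)\cap[G^\star,G^\star]$, and that for any such stem extension the five-term homology sequence forces a surjection $\Ho^2(G,\bC^\times)\twoheadrightarrow A$, hence $|A|\le |\Ho^2(G,\bC^\times)|$, with $G^\star$ a representation group precisely when equality holds. Since $|G|=p^6$ and $|\Ho^2(G,\bC^\times)|=p^8$, the whole task reduces to producing a stem extension of $G$ of order exactly $p^{14}$.

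First I would set $A=\langle w_1,w_2,w_3,w_4,v_1,v_2,u_1,u_2\rangle$ and identify it as $\ker(G^\star\to G)$. Reducing the defining relations of $G^\star$ modulo $A$, the identifications $[x_2,x_1]=[y_2,x_2]=[y_2,y_1]=[z_1,x_i]=[z_1,y_i]=1$ and $[y_2,x_1]=z_2$ collapse exactly to the presentation of $G$ recorded in Subsection \ref{subsection-presentation}, so $G^\star/A\cong G$. The containment $A\subseteq[G^\star,G^\star]$ is immediate, as each generator of $A$ is a commutator of generators (for $w_2$ one uses $w_2=[y_2,x_1]z_2^{-1}=[y_2,x_1][y_1,x_2]^{-1}$). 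Centrality of $A$ follows from the convention on unstated commutators together with the relations $[z_2,x_1]=[z_1,x_2]$ and $[z_2,y_1]=[z_1,y_2]$, which guarantee that every commutator of the form $[z_i,x_j]$, $[z_i,y_j]$ already lies in $A$; thus $A\subseteq Z(G^\star)\cap[G^\star,G^\star]$ and $G^\star$ is a stem extension of $G$, giving the upper bound $|A|\le p^8$. The same bound is confirmed by a commutator-collection argument: $G^\star$ is generated by $x_1,x_2,y_1,y_2$ and is nilpotent of class $3$ with $\gamma_2(G^\star)=\langle z_1,z_2\rangle A$ and $\gamma_3(G^\star)=\langle v_1,v_2,u_1,u_2\rangle$, so (using the exponent-$p$ relations and $p$ odd) every element admits the normal form
\[
x_1^{a_1}x_2^{a_2}y_1^{b_1}y_2^{b_2}z_1^{c_1}z_2^{c_2}w_1^{d_1}w_2^{d_2}w_3^{d_3}w_4^{d_4}v_1^{e_1}v_2^{e_2}u_1^{f_1}u_2^{f_2},\qquad \text{all exponents in }\{0,\dots,p-1\},
\]
whence $|G^\star|\le p^{14}$.

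The main obstacle is the matching lower bound $|A|\ge p^8$, i.e.\ proving that the eight central generators are genuinely independent and that the presentation does not collapse. For this I would exhibit a concrete central extension realizing all the relations. Taking the eight cohomology classes that span $\Ho^2(G,\bC^\times)$, represented by the parameters $\mu_1,\dots,\mu_8$ of the cocycle $\alpha$ in Theorem \ref{2cocycle}, I assemble them into a single $(\bZ/p\bZ)^8$-valued cocycle and form the associated central extension $E$ of $G$ by $(\bZ/p\bZ)^8$, which has order exactly $p^{14}$ by construction. Computing the commutator pairing of $E$ via $\alpha(g,h)\alpha(h,g)^{-1}$, read off from the explicit formula of Theorem \ref{2cocycle}, reproduces precisely the relations among $w_j,v_i,u_i$; hence the universal property of the presentation yields a surjection $G^\star\twoheadrightarrow E$. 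Because the eight parameters are independent classes in $\Ho^2(G,\bC^\times)$, the central $(\bZ/p\bZ)^8$ of $E$ survives, forcing $|A|\ge p^8$.

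Combining the two bounds gives $|A|=p^8=|\Ho^2(G,\bC^\times)|$, so the surjection $\Ho^2(G,\bC^\times)\twoheadrightarrow A$ is an isomorphism and the stem extension $G^\star$ is of maximal order; by the criterion above it is a representation group of $G$. The delicate point throughout is that the two identification relations $[z_2,x_1]=[z_1,x_2]$ and $[z_2,y_1]=[z_1,y_2]$ are exactly the constraints needed so that the surviving central quotient has rank $8$ and not more: verifying that these, and no further, relations are imposed in $E$ is where the precise shape of $\alpha$ (in particular the coupled terms $\mu_6,\mu_7,\mu_8$ mixing the $b_i$, $c_i$ and $\binom{a_i'}{2}$ contributions) is essential. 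An alternative to the cocycle construction would be to apply the Blackburn--Evens description of covers of class-$2$ $p$-groups with $G/G'$ elementary abelian and simplify the resulting presentation; I expect the cocycle route to be the most self-contained given Theorems \ref{SchurM} and \ref{2cocycle}.
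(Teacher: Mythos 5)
Your overall framework (exhibit a stem extension $1\to A\to G^\star\to G\to 1$, use the five-term sequence to get $|A|\le|\Ho^2(G,\bC^\times)|$, and conclude that equality makes $G^\star$ a representation group) is the same as the paper's, and your upper bound $|G^\star|\le p^{14}$ via collection, the identification $G^\star/A\cong G$, and the stem property of $A$ are all fine. The genuine gap is in your lower bound, and it is a circularity: you propose to realize the eight central generators inside an explicit central extension $E$ built from the cocycle $\alpha$ of Theorem \ref{2cocycle}. But in this paper Theorem \ref{2cocycle} is not available before Theorem \ref{representation-group}: Step 2 of its proof computes transgression from the nilpotency class three group $\tilde{G}$ of (\ref{tildeG}), and the fact that $\tilde{G}$ has order $p^{10}$ --- equivalently, that the exponents $k_5,\dots,k_8$ involving $\binom{a_1'}{2}$, $\binom{b_1}{2}$ actually define $2$-cocycles and the classes $\mu_5,\dots,\mu_8$ exist at all --- is exactly the corollary extracted from the proof of Theorem \ref{representation-group}, since $\tilde{G}\cong H_5$ is one of the intermediate quotients constructed there. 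So the input you treat as a black box already contains the non-collapse statement you are trying to prove; nothing in your proposal independently certifies that the displayed formula satisfies the cocycle identity, or that the class-three presentation does not collapse. The non-collapse is the entire content of the theorem, and it cannot be imported from a result that is downstream of it.

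The repair is to supply the lower bound without quoting Theorem \ref{2cocycle}. Two honest options: (a) verify by direct (tedious but finite) computation that the $(\bZ/p\bZ)^8$-valued function whose coordinates are $a_2a_1'$, $b_2a_1'$, $b_2a_2'$, $b_2b_1'$, $k_5,\dots,k_8$ satisfies the $2$-cocycle identity; granting that, your construction of $E$, the verification of the relations via the pairing $\alpha(g,h)\alpha(h,g)^{-1}$, and the resulting surjection $G^\star\twoheadrightarrow E$ do go through, and this would be a legitimate, self-contained alternative proof; or (b) do what the paper does: work in the free group $F$ on $x_1,x_2,y_1,y_2$, pass through the chain of quotients $H_1,\dots,H_5,K_1,\dots,K_4$, and use the Hall--Witt identity to derive the relations $[z_2,x_1]=[z_1,x_2]$, $[z_2,y_1]=[z_1,y_2]$, $[z_2,x_2]=[z_2,y_2]=1$ modulo $\gamma_4(F)$, so that each central generator is seen to survive one step at a time and $|G^\star|=p^{14}$ follows. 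As written, your proposal correctly isolates the lower bound as "the main obstacle" but then resolves it by appeal to a theorem that, in the paper's logical order, is a consequence of the one being proved.
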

The parallel results for $\mathbb{H}_3(\bZ/p^2 \bZ)$ have already appeared in \cite{PS}.
For other groups, a description of their representation group is included in Section \ref{section-representation group}. In Section \ref{Non-degenerate-cocycles} we give a construction of all projective representation of  $\mathbb{H}_{3}\big(\fpp\big)$ and $\mathbb{H}_{3}\big(\mathbb Z/p^2\mathbb Z\big)$. Following \cite{MR2534251}, we say a 2-cocycle $[\alpha] \in \Ho^2(G, \mathbb C^\times)$ is  non-degenerate if the twisted group algebra $\mathbb C^\alpha[G]$ is a simple algebra. In this case, the group $G$ is called of central type. These groups  play an important role in the classification of semisimple triangular complex Hopf algebras, see \cite{Etingof-gelaki2000}. We use
our construction of projective representations to classify the degenerate and non-degenerate cocycles of $\mathbb{H}_{3}\big(\fpp\big)$ and $\mathbb{H}_{3}\big(\mathbb Z/p^2\mathbb Z\big)$. More specifically we prove the following result.
\begin{thm}
	\label{thm:number-non-degenerate} For a finite group $G$, let $X(G) = \{ [\alpha] \in \Ho^2(G, \mathbb C^\times) \mid [\alpha] \mathrm{\,\,is \,\, non \,\, degenerate} \}$. Then
	\[
	|X(G)| = \begin{cases}
	 p^2(p-1)^2, & G = \mathbb{H}_{3}\big(\mathbb Z/p^2\mathbb Z\big)  \\
	    p^5(p-1)^2(p+1), &  G = \mathbb{H}_{3}\big(\fpp\big).	
	\end{cases}
	\]
\end{thm}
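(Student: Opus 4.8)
The plan is to use the standard criterion that $[\alpha]$ is non-degenerate precisely when the twisted group algebra $\bC^\alpha[\mathbb{H}_3(R)]$ has a unique simple module, equivalently when $\mathbb{H}_3(R)$ has a single $\alpha$-regular conjugacy class. Since the identity is always $\alpha$-regular and $|\mathbb{H}_3(R)| = p^6$, this is the same as saying that the unique irreducible $\alpha$-representation furnished by the construction of Section~\ref{Non-degenerate-cocycles} has degree $p^3$, so I would read off non-degeneracy directly from the list of degrees produced there. The convenient computational handle is the pairing $\beta_\alpha(g,h) = \alpha(g,h)\alpha(h,g)^{-1}$: for each $g$ the map $h \mapsto \beta_\alpha(g,h)$ is a linear character of the centralizer $C_{\mathbb{H}_3(R)}(g)$, and $g$ is $\alpha$-regular exactly when this character is trivial. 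Thus $[\alpha]$ is non-degenerate iff for every $g \neq 1$ there is some $h$ commuting with $g$ with $\beta_\alpha(g,h) \neq 1$.

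First I would compute $\beta_\alpha$ explicitly. Writing a general element in the $R$-coordinates $(c,b,a)$ (so that the central generators $z_i$ record $c$, and $x_i,y_i$ record $a,b$) and inserting the cocycle of Theorem~\ref{2cocycle}, and for $\bZ/p^2\bZ$ the cocycle recorded in \cite{PS}, the values of $\beta_\alpha$ become monomials in the parameters $\mu_1,\dots,\mu_8$ (respectively the two $\bZ/p^2\bZ$-parameters) with exponents linear in the coordinates, bimultiplicative along centralizers. Because $\mathbb{H}_3(R)$ is of class $2$ with $Z = Z(\mathbb{H}_3(R)) = \mathbb{H}_3(R)'$, the centralizer of $g=(c,b,a)$ is cut out by the single commutator equation $ab'=a'b$ in $R$, so I can analyze $\alpha$-regularity in two layers: central $g \in Z$, whose centralizer is all of $\mathbb{H}_3(R)$ and which gives the cleanest linear conditions on the $\mu_i$, and then non-central $g$. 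The set $\mathrm{Reg}_\alpha$ of $\alpha$-regular elements is a subgroup, and non-degeneracy is the statement $\mathrm{Reg}_\alpha = \{1\}$.

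The decisive step is to repackage this triviality condition as an invertibility condition over the ring $R$ itself. For $R=\fpp$ the eight parameters $(\mu_1,\dots,\mu_8)$ are naturally the $\mathbb{F}_p$-coordinates of four elements of $R$, that is, of a matrix $M_\alpha \in M_2(R)$, consistent with $|M_2(R)|=p^8=|\Ho^2|$ from Theorem~\ref{SchurM}, and I expect the vanishing of $\mathrm{Reg}_\alpha$ to be equivalent to $M_\alpha \in \mathrm{GL}_2(R)$; for $R=\bZ/p^2\bZ$ the two parameters should each have to be units of $R$. Granting this, the count is purely arithmetic: using that $\fpp$ and $\bZ/p^2\bZ$ are local rings with residue field $\mathbb{F}_p$, one gets $|X(\mathbb{H}_3(\fpp))| = |\mathrm{GL}_2(\fpp)| = p^5(p-1)^2(p+1)$ and $|X(\mathbb{H}_3(\bZ/p^2\bZ))| = |R^\times|^2 = (p(p-1))^2 = p^2(p-1)^2$.

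The main obstacle is precisely this repackaging over the non-reduced ring $R=\fpp$: one must show that $\mathrm{Reg}_\alpha$ is trivial not merely modulo the maximal ideal $(t)$ but exactly, so the delicate point is to rule out the intermediate degeneracies in which the associated form is non-degenerate over $\mathbb{F}_p$ yet $\mathrm{Reg}_\alpha$ picks up a nonzero $t$-torsion element, equivalently $M_\alpha$ has unit determinant mod $(t)$ but fails to be invertible over $R$. Handling the non-central $g$ uniformly, namely showing their $\alpha$-regularity imposes nothing strictly weaker than the central conditions once $M_\alpha$ is invertible, is the part that genuinely needs the explicit form computation rather than a soft argument. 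The $\bZ/p^2\bZ$ case is similar but easier, since its center is cyclic and so the non-scalarity needed to reach degree $p^3$ must come entirely from the cross terms $\beta_\alpha(z,g)$ with $z \in Z$ and $g \notin Z$, which is what forces the two parameters to be units.
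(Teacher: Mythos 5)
Your general framework (counting $\alpha$-regular classes via the pairing $\beta_\alpha$) is a legitimate dual to the paper's method, which instead computes inertia groups and dimensions of $\irr(G^\star\mid\chi)$ inside the representation group and invokes Lemma~\ref{lemma:non-degenerate-cocycles}; and your conclusion for $\hzpp$ (both parameters must be units, i.e.\ of order $p^2$) agrees with the paper. The gap is in your ``decisive step'' for $R=\fpp$: the claim that non-degeneracy of $\alpha$ is equivalent to $M_\alpha\in\mathrm{GL}_2(R)$ for a matrix $M_\alpha$ built \emph{linearly} from $(\mu_1,\dots,\mu_8)$ is false, and it is exactly the step that produces your count. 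Since $R$ is local with maximal ideal $(t)$, a matrix over $R$ is invertible if and only if its reduction modulo $(t)$ is invertible over $\mathbb{F}_p$ (this also makes your ``delicate point'' --- unit determinant mod $(t)$ yet non-invertible over $R$ --- vacuous: no such matrices exist). Consequently, under any linear identification of the parameter space $\Ho^2(G,\bC^\times)\cong(\bZ/p\bZ)^8$ with $M_2(R)$, your criterion would factor through a $4$-dimensional linear quotient, i.e.\ it could depend on only four of the eight $\mathbb{F}_p$-parameters.

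The true non-degeneracy locus admits no such factorization. By Theorem~\ref{nondegenerate}, when $\mu_6\neq1$ and $\mu_8\neq1$ (writing $\mu_i=\mu_6^{r_i}$) the condition is $r_1-r_2r_8-r_3(r_7r_8-r_5)+r_4r_8^2+{r_8\choose 2}\neq 0$: it is cubic and inhomogeneous (note the ${r_8\choose 2}$ term, which arises from class-three commutator identities in $G^\star$), and it flips when $\mu_2$ alone is varied with the other parameters fixed; when $\mu_6=1,\mu_8\neq1$ the condition is instead $t_4\neq t_3t_7$. All eight parameters genuinely matter, so no determinant-of-a-linear-matrix criterion can reproduce this case structure. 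The identity $|X(\hfpp)|=|\mathrm{GL}_2(\fpp)|=p^5(p-1)^2(p+1)$ is a correct numerical coincidence, but it cannot be \emph{derived} your way: the count must be assembled from the three cases, $p^5(p-1)^2+p^5(p-1)^2+p^5(p-1)^3=p^5(p-1)^2(p+1)$, and obtaining those three conditions requires the non-central analysis (your $\beta_\alpha$ computation on centralizers carried to completion, equivalently the inertia-group and character-extension computations of Sections~\ref{section:projection-function-field} and~\ref{nondegco1}), which your proposal explicitly defers. A minor additional caveat: your assertion that the set of $\alpha$-regular elements is a subgroup is unjustified for nonabelian $G$, though nothing essential hinges on it.
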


For a proof of the above, see Section \ref{Non-degenerate-cocycles}. In fact, we obtain the above from a construction of the irreducible representations of $G^{\star},$ see Sections~ \ref{nondegco1} and ~\ref{nondegco2}.

\section{Preliminaries}
In this section we fix the notation and recall a few results which will be used in the upcoming sections. For a group $G$, we use $G'$ to denote its commutator subgroup and $Z(G)$ denotes the center of $G$. The first result, due to Schur, describes the Schur multiplier of a direct product in terms of the Schur multipliers of its components, see \cite[Theorem 2.2.10]{Karpilovsky}.
\begin{thm}\label{directproduct}
For finite groups $G_1$ and $G_2$, $$\Ho^2(G_1\times G_2,\mathbb C^\times)\cong \Ho^2(G_1,\mathbb C^\times)\times \Ho^2( G_2,\mathbb C^\times) \times \Hom(\frac{G_1}{G_1'}\otimes \frac{G_2}{G_2'},\mathbb C^\times).$$
\end{thm}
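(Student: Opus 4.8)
The plan is to reduce the statement to the Künneth formula for integral group homology, exploiting that $\bC^\times$ is a divisible (hence injective) abelian group. First I would pass from cohomology with coefficients in $\bC^\times$ to the integral homology groups $H_n(G,\bZ)$. Since $\bC^\times$ is divisible, it is an injective $\bZ$-module, so the $\mathrm{Ext}$ term in the universal coefficient theorem vanishes and one obtains a natural isomorphism $\Ho^2(G,\bC^\times)\cong \Hom\big(H_2(G,\bZ),\bC^\times\big)$ for every finite group $G$; here $H_2(G,\bZ)$ is the Schur multiplier. I would also record the standard identification $H_1(G,\bZ)\cong G/G'$.

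Next I would apply the Künneth formula to the product. Realizing the homology through the tensor product of bar complexes (equivalently, via $BG_1\times BG_2$ and the topological Künneth theorem), degree $2$ gives
\[
H_2(G_1\times G_2,\bZ)\cong H_2(G_1,\bZ)\oplus H_2(G_2,\bZ)\oplus\big(H_1(G_1,\bZ)\otimes H_1(G_2,\bZ)\big),
\]
where the potential $\mathrm{Tor}$ summands $\mathrm{Tor}\big(H_i(G_1,\bZ),H_j(G_2,\bZ)\big)$ with $i+j=1$ drop out because $H_0(G,\bZ)\cong\bZ$ is free. This is the crux of the argument.

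Finally I would apply $\Hom(-,\bC^\times)$. As $\bC^\times$ is injective this functor is exact and additive, so it sends the direct sum above to a direct product; combined with the identifications of the first step, the three summands become $\Ho^2(G_1,\bC^\times)$, $\Ho^2(G_2,\bC^\times)$, and $\Hom\big(G_1/G_1'\otimes G_2/G_2',\bC^\times\big)$, which is exactly the claimed decomposition. The only genuine obstacle is the degree-$2$ Künneth isomorphism together with the vanishing of its $\mathrm{Tor}$ correction; the rest is formal manipulation with exact functors. Alternatively one can argue entirely with cocycles: restrict $\alpha$ to the two factors to obtain $[\alpha_1]$ and $[\alpha_2]$, extract the third invariant as the commutator pairing $\beta(g_1,g_2)=\alpha\big((e,g_2),(g_1,e)\big)/\alpha\big((g_1,e),(e,g_2)\big)$, which is bilinear and descends to $G_1/G_1'\otimes G_2/G_2'$, and then verify that $[\alpha]\mapsto([\alpha_1],[\alpha_2],\beta)$ is a bijection by reconstructing a representative cocycle from these three data; in that route the main work shifts to checking the cocycle identities and well-definedness on cohomology classes.
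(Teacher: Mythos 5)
Your proposal is correct, but note that the paper contains no proof of this statement: it is Schur's classical theorem, quoted with a citation to Karpilovsky, Theorem 2.2.10, so the natural comparison is with the standard cocycle-theoretic proof found there. Your main route is genuinely different and entirely sound: the identification $\Ho^2(G,\bC^\times)\cong \Hom\big(H_2(G,\bZ),\bC^\times\big)$ is valid because $\bC^\times$ is divisible, hence injective, so the $\mathrm{Ext}$ term of the universal coefficient sequence vanishes; the K\"unneth formula for $B(G_1\times G_2)\simeq BG_1\times BG_2$ in degree $2$ has correction term $\bigoplus_{i+j=1}\mathrm{Tor}\big(H_i(G_1,\bZ),H_j(G_2,\bZ)\big)$, which vanishes since $H_0(G,\bZ)\cong\bZ$ is free, exactly as you say; and dualizing by the exact functor $\Hom(-,\bC^\times)$ together with $H_1(G,\bZ)\cong G/G'$ yields the three factors. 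What this buys is conceptual economy and generality (nothing about finiteness is used until one wants to identify $\Hom(H_2,\bC^\times)$ with $H_2$ itself), at the cost of invoking topological or homological machinery and of producing no explicit cocycle representatives. The classical proof you sketch as an alternative --- restriction to the two factors plus the commutator pairing $\beta(g_1,g_2)=\alpha\big((e,g_2),(g_1,e)\big)\alpha\big((g_1,e),(e,g_2)\big)^{-1}$, which is bimultiplicative on commuting subgroups and descends to $G_1/G_1'\otimes G_2/G_2'$ --- is essentially Karpilovsky's argument, and its pairing is precisely the Iwahori--Matsumoto-type map $\chi$ that the paper itself employs in the exact sequence \eqref{exact-sequence}; that route is more hands-on (one must verify the cocycle identities and reconstruct a representative from the triple, e.g.\ via $\alpha\big((g_1,g_2),(h_1,h_2)\big)=\alpha_1(g_1,h_1)\alpha_2(g_2,h_2)\beta(h_1,g_2)$), but it delivers the explicit splitting that is useful when, as in this paper, one wants concrete $2$-cocycles for $\mathbb{H}_3(\bZ/p\bZ\times\bZ/p\bZ)$.
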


A central extension,
\begin{equation}\label{equation-stem}
 1\to A \to G \to G/A \to 1
\end{equation} is called a {\it stem} extension, if $A \subseteq Z(G) \cap G'$. For a central extension (\ref{equation-stem}), the map $\tra:\Hom( A,\bC^\times) \to \Ho^2(G/A, \bC^\times) $ given by $f \mapsto [\tra(f)]$,
$$\tra(f)(\overline{x},\overline{y}) = f(\mu (\overline{x})\mu(\bar{y})\mu(\bar{xy})^{-1}),\,\, \overline{x}, \overline{y} \in G/A, $$
for a section $\mu: G/A \rightarrow G$ is a group homorphism and is called the transgression homomorphism. The map $\inf : \Ho^2(G/A, \bC^\times)   \to  \Ho^2(G, \bC^\times) $ given by $[\alpha] \mapsto [\inf(\alpha)]$, where $\inf(\alpha)(x,y) = \alpha(xA,yA)$, is a group homomorphism and is called the inflation homomorphism, The spectral sequence
for cohomology of groups yields the following exact sequence.
\begin{equation}\label{exact-sequence}
1 \rightarrow \Hom( A,\bC^\times) \xrightarrow[]{\tra} \Ho^2(G/A, \bC^\times)  \xrightarrow{\inf} \Ho^2(G, \bC^\times)  \xrightarrow{(\res,\chi)}   \Ho^2(A, \bC^\times) \oplus \Hom(G/G' \otimes A,  \bC^\times),
\end{equation}
where the map $\chi$, defined by  Iwahori and Matsumoto  \cite{IM}, is given by $\chi([\alpha])(gG'\otimes a)=\alpha(g,a)\alpha(a,g)^{-1}$ for $g\in G, a \in A$.

\begin{lemma}[Hall–Witt identity]
Let $G$ be a finite group of nilpotency class $3$. For $x,y,z \in G$,  we have
	\[
	[x,y^{-1},z][y,z^{-1},x][z,x^{-1},y]=1.
	\]
\end{lemma}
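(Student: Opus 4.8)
The plan is to deduce the stated identity from the unconditional Hall--Witt identity combined with the class-$3$ hypothesis. Recall that in an \emph{arbitrary} group the Hall--Witt (or group-theoretic Jacobi) identity holds in the form
\[
[x,y^{-1},z]^{y}\,[y,z^{-1},x]^{z}\,[z,x^{-1},y]^{x}=1,
\]
where $[a,b,c]$ abbreviates $[[a,b],c]$ and $g^{h}=h^{-1}gh$ (the precise placement of the conjugating elements depends only on the commutator convention $[x,y]=x^{-1}y^{-1}xy$ fixed in the paper, and each of the three factors is conjugated by exactly one of $x,y,z$). My first step is to establish this general identity. It is a purely formal statement about words in $x,y,z$ and their inverses: once each doubly-iterated commutator is written out, the three conjugated factors telescope and every letter cancels. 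I would verify it either by direct expansion or, more cleanly, by introducing a single well-chosen auxiliary word and showing the product collapses to the identity; in any case this is the classical Hall--Witt identity and may be cited from any standard treatment of commutator calculus.

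The second step, which is where the hypothesis enters, is to remove the conjugation exponents. Write $\gamma_2(G)=G'$ and $\gamma_3(G)=[\gamma_2(G),G]$ for the lower central series. Nilpotency class $3$ means $\gamma_4(G)=[\gamma_3(G),G]=1$, which is equivalent to $\gamma_3(G)\subseteq Z(G)$. Each factor occurring in the identity has the shape $[[a,b],c]$ with $[a,b]\in G'=\gamma_2(G)$, and hence lies in $[\gamma_2(G),G]=\gamma_3(G)\subseteq Z(G)$. In particular the three elements $[x,y^{-1},z]$, $[y,z^{-1},x]$ and $[z,x^{-1},y]$ are all central in $G$.

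Since a central element is fixed by every conjugation, we get $[x,y^{-1},z]^{y}=[x,y^{-1},z]$, and likewise $[y,z^{-1},x]^{z}=[y,z^{-1},x]$ and $[z,x^{-1},y]^{x}=[z,x^{-1},y]$. Substituting these three equalities into the general Hall--Witt identity erases all the conjugation exponents and leaves precisely
\[
[x,y^{-1},z]\,[y,z^{-1},x]\,[z,x^{-1},y]=1,
\]
which is the assertion of the lemma.

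I expect the only genuine obstacle to be the verification of the unconditional identity: it is entirely mechanical but bookkeeping-heavy, so the elegant route is the telescoping argument with an auxiliary word rather than a brute expansion. Once that identity is available, the reduction is immediate, and the class-$3$ hypothesis is used for nothing more than forcing the three double commutators into $\gamma_3(G)\subseteq Z(G)$ so that the conjugations can be dropped.
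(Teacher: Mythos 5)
Your proof is correct: the paper states this lemma without proof (as a standard fact), and your argument — invoking the unconditional Hall--Witt identity $[x,y^{-1},z]^{y}[y,z^{-1},x]^{z}[z,x^{-1},y]^{x}=1$, then observing that class $3$ forces each double commutator into $\gamma_3(G)\subseteq Z(G)$ so the conjugations can be dropped — is exactly the standard route the authors implicitly rely on. The only point to be careful about is that the placement of the conjugating exponents must match the paper's convention $[x,y]=x^{-1}y^{-1}xy$, which your stated form does.
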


\subsection{Projective representations of a finite group} In this section we include the results that we require regarding the projective representations of a finite group. We will use these in Section~\ref{Non-degenerate-cocycles}.

Let $G$ be a finite group. We use $Z^2(G, \mathbb C^\times)$ to denote the set of all $2$-cocycles of $G$. For $\alpha \in Z^2(G, \mathbb C^\times)$,  we use $\mathrm{Irr}^\alpha(G)$ to denote the set of equivalence classes of irreducible $\alpha$-representations of $G$. For $\alpha = 1$, we use $\mathrm{Irr}(G)$ instead of $\mathrm{Irr}^\alpha(G)$ and call these to be the set of ordinary irreducible representations of $G$. Let $G^\star$ be a representation group of $G$ with $A\cong \Ho^2(G, \mathbb C^\times)$ such that
\begin{eqnarray}
	\label{stem-extension-G}
	1 \rightarrow A \rightarrow G^\star \rightarrow  G \rightarrow 1
\end{eqnarray}
is a stem extension. The following well known result relates the projective representations of $G$ and the ordinary ones of $G^\star$.
\begin{thm}\label{inf}
	Let $\alpha$ be a $2$-cocycle of $G$. Let $\chi \in \Hom(A,\mathbb C^\times)$ be such that $\mathrm{tra}(\chi) = [\alpha]$.
	There is a bijective correspondence between
	$$\irr^{\alpha}(G)\leftrightarrow \irr{(G^\star \mid \chi )}$$ obtained  via inflation. In particular, we obtain the following bijection via inflation.
	$$\bigcup_{[\alpha] \in \Ho^2(G,\bC^\times)} \irr^{\alpha}(G)\leftrightarrow \irr{(G^\star )}$$
	
\end{thm}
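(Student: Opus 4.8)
The plan is to exploit that $A$ is central in $G^\star$, so that Schur's lemma lets me sort the ordinary irreducible representations of $G^\star$ by the scalar character through which $A$ acts. For $\rho\in\irr(G^\star)$ the restriction $\rho|_A$ is a scalar character $\chi_\rho\in\Hom(A,\bC^\times)$, which gives a disjoint decomposition $\irr(G^\star)=\bigsqcup_{\chi}\irr(G^\star\mid\chi)$, where $\irr(G^\star\mid\chi)$ consists of those $\rho$ with $\rho(a)=\chi(a)\,\mathrm{id}$ for all $a\in A$. It therefore suffices to produce, for each fixed $\chi$ with $\tra(\chi)=[\alpha]$, a bijection $\irr(G^\star\mid\chi)\leftrightarrow\irr^{\alpha}(G)$; the ``in particular'' statement then follows by assembling over $\chi$, using that for a representation group $\tra$ is an isomorphism, so the classes $\tra(\chi)$ run exactly once over all of $\Ho^2(G,\bC^\times)$ as $\chi$ ranges over $\Hom(A,\bC^\times)$.

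To build the bijection I would fix a set-theoretic section $\mu\colon G\to G^\star$ with $\mu(1)=1$ and record the factor set $f(g,h)=\mu(g)\mu(h)\mu(gh)^{-1}\in A$. In the forward direction, given $\rho\in\irr(G^\star\mid\chi)$ I set $\tilde\rho(g)=\rho(\mu(g))$ (inflation along $\mu$). Using $\mu(g)\mu(h)=f(g,h)\mu(gh)$ together with $\rho(a)=\chi(a)\,\mathrm{id}$, one computes directly that $\tilde\rho$ is a projective representation whose cocycle is, up to the standard inverse convention, $(g,h)\mapsto\chi(f(g,h))$; this is precisely the transgression $\tra(\chi)$, hence a representative of $[\alpha]$ after the routine coboundary adjustment to the chosen representative $\alpha$. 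In the reverse direction, given an irreducible $\alpha$-representation $\sigma$ of $G$ I set $\rho(a\mu(g))=\chi(a)\,\sigma(g)$, which is well defined because every element of $G^\star$ is uniquely $a\mu(g)$ with $a\in A=\ker(G^\star\to G)$ central; here the cocycle identity for $\sigma$ is exactly what is needed to verify that $\rho$ is an honest homomorphism of $G^\star$.

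I would then check that these two assignments are mutually inverse and descend to equivalence classes. Conjugating $\rho$ by an intertwining operator conjugates $\tilde\rho$ by the same operator, and conversely, so equivalence is preserved both ways; and since $\rho(G^\star)$ and $\{\tilde\rho(g):g\in G\}$ have the same $\bC$-linear span in $\mathrm{End}(V)$, the representation $\tilde\rho$ is irreducible exactly when $\rho$ is. This gives $\irr(G^\star\mid\chi)\leftrightarrow\irr^{\alpha}(G)$, and summing over $\chi$ yields the displayed global correspondence.

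The main obstacle I anticipate is bookkeeping rather than conceptual. One must track the cocycle and inverse conventions carefully so that the cocycle of $\tilde\rho$ lands on the prescribed representative $\alpha$ of $\tra(\chi)$ and not merely on a cohomologous cocycle, and confirm that the coboundary adjustment relating cohomologous cocycles is compatible with the equivalence-class bijection. The other point needing care is the ``exactly once'' claim in the global statement, which rests on $\tra$ being an isomorphism; this is the defining feature of $G^\star$ as a representation group realizing the stem extension (\ref{stem-extension-G}) with $A\cong\Ho^2(G,\bC^\times)$, and I would invoke it through the exact sequence (\ref{exact-sequence}).
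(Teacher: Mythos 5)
Your proposal is correct, and it is essentially the paper's approach: the paper gives no written-out argument, deferring to \cite[Chapter 3, Section 3]{GK85} and \cite[Theorem~3.2]{PS}, and what you write — Schur's lemma to decompose $\irr(G^\star)$ by the central character of $A$, the section/factor-set dictionary $\rho \mapsto \rho\circ\mu$ and $\sigma \mapsto \big(a\mu(g)\mapsto \chi(a)\sigma(g)\big)$ to match $\irr(G^\star\mid\chi)$ with $\irr^{\alpha}(G)$, and injectivity of $\tra$ for stem extensions plus $|A|=|\Ho^2(G,\bC^\times)|$ to get the global statement — is precisely the standard proof underlying those citations. The one wrinkle, which you correctly flag, is that with the paper's convention $\rho(xy)=\alpha(x,y)\rho(x)\rho(y)$ inflation literally produces the class $\tra(\chi)^{-1}=\tra(\chi^{-1})$ rather than $\tra(\chi)$, an inverse-convention mismatch internal to the paper's definitions that does not affect the substance of the bijection.
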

A proof of this follows from the proof of \cite[Chapter 3, Section 3]{GK85}, see also \cite[Theorem~3.2]{PS}. Therefore, to determine the projective representations of   $\hfpp$ and $\hzpp$, it suffices to determine the ordinary representations of their representation groups. We first discuss a general method that will work in our situation.

Let $G$ be a finite group with an abelian normal subgroup $N$ such that $G/ N $ is abelian. Let $\chi: N \rightarrow \mathbb C^\times$ be a one dimensional irreducible representation of $N$ and let  $I_G(\chi) = \{ g \in G \mid \chi^g = \chi\}$ be the inertia group of $\chi$ in $G$. Let $\mathrm{Irr}(G \mid \chi)$ denote the set of inequivalent irreducible representations of $G$ lying above $\chi$, that is $\rho \in
\mathrm{Irr}(G \mid \chi)$ if and only if $\mathrm{Hom}_{N}(\rho|_{N}, \chi)$ is non-trivial.

By Clifford theory, the map $\rho \mapsto \ind_{I_G(\chi)}^{G}(\rho)$ gives a bijection between $\mathrm{Irr}(I_G(\chi) \mid \chi)$ and $\mathrm{Irr}(G \mid \chi)$. Therefore it suffices to determine $\mathrm{Irr}(I_G(\chi) \mid \chi)$ for every $\chi$. Below we mention a method that helps us in this direction.

Let $K$ be a finite group with an abelian normal subgroup $N$ such that $K/ N $ is abelian. Let $\chi: N \rightarrow \mathbb C^\times$ be a one dimensional irreducible representation of $N$ such that $\chi^k = \chi$ for all $k \in K$, that is $I_K(\chi) = K$. Let $T$ be a fixed set of left coset representatives of $N$ in $K$. Define a map $\chi': K \rightarrow \mathbb C^\times$ by $\chi'(kn) = \chi(n)$ for all $k\in T$ and $n \in N$. Following \cite[Chapter~11]{Isaacs-book}, let $\alpha \in Z^2(K, \mathbb C^\times)$ be a 2-cocycle of $K$ associated to $\chi'$ and $\beta \in Z^2(K/N, \mathbb C^\times)$ be defined by $\beta(gN, hN) = \alpha(g,h)$ for $g,h \in K$.
\begin{lemma}
	\label{lemma:extension-condition}
	The following are equivalent.
	\begin{enumerate}
		\item The character $\chi$ extends to $K$.
		\item $[K,K] \subseteq \mathrm{Ker}(\chi)$.
		\item $[\beta] = 1$.
	\end{enumerate}
\end{lemma}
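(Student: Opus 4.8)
The plan is to prove the three statements equivalent by establishing the cycle $(1)\Rightarrow(2)\Rightarrow(3)\Rightarrow(1)$, working throughout with the setup from Isaacs' book that ties the cocycle $\alpha$ to the chosen section-based function $\chi'$. Recall that $\chi'$ is the function on $K$ defined by $\chi'(kn)=\chi(n)$ for $k\in T$, $n\in N$; it agrees with $\chi$ on $N$ but is generally not a homomorphism, and $\alpha$ precisely measures its failure to be one, via $\alpha(g,h)=\chi'(g)\chi'(h)\chi'(gh)^{-1}$. The key observation is that because $\chi$ is $K$-invariant (i.e.\ $I_K(\chi)=K$), the cocycle $\alpha$ descends to a well-defined cocycle $\beta$ on $K/N$, which is exactly the statement that $\beta(gN,hN)=\alpha(g,h)$ is independent of coset representatives.

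For $(1)\Rightarrow(2)$: if $\chi$ extends to a genuine linear character $\hat\chi$ of $K$, then $\hat\chi$ is a homomorphism into the abelian group $\mathbb C^\times$, so $[K,K]\subseteq\mathrm{Ker}(\hat\chi)$; restricting to $N$ recovers $\chi$, and since $[K,K]\subseteq\mathrm{Ker}(\hat\chi)$ and $\hat\chi|_N=\chi$, one checks $[K,K]\cap N\subseteq\mathrm{Ker}(\chi)$, but in fact $[K,K]\subseteq N$ already (as $K/N$ is abelian), so $[K,K]\subseteq\mathrm{Ker}(\chi)$ directly. For $(2)\Rightarrow(3)$: assuming $[K,K]\subseteq\mathrm{Ker}(\chi)$, I would show $\chi$ descends to a character of the abelianization, and then argue that the obstruction $\beta$ is cohomologically trivial — concretely, one produces a function $\mu:K/N\to\mathbb C^\times$ whose coboundary is $\beta$, built from $\chi'$ and the extension of $\chi$ to $[K,K]N$; the condition $[K,K]\subseteq\mathrm{Ker}(\chi)$ is exactly what makes this correction function well-defined on cosets. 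For $(3)\Rightarrow(1)$: if $[\beta]=1$, write $\beta=\partial\mu$ for some $\mu:K/N\to\mathbb C^\times$; then the function $g\mapsto\chi'(g)\mu(gN)^{-1}$ is a homomorphism $K\to\mathbb C^\times$ restricting to $\chi$ on $N$ (since $\mu$ is trivial on the identity coset up to normalization), giving the desired extension.

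The main obstacle I expect is the $(2)\Leftrightarrow(3)$ link, since $(1)\Leftrightarrow(2)$ is essentially formal. The subtlety is that $\beta$ lives on $K/N$ while the hypothesis $[K,K]\subseteq\mathrm{Ker}(\chi)$ is a statement about $K$ and $N$; bridging them requires carefully tracking how $\chi'$ behaves on commutators and verifying that the coset function $\mu$ is genuinely well-defined. I would lean on the general theory in \cite[Chapter~11]{Isaacs-book}, where this correspondence between extendibility of an invariant character and triviality of the associated factor set is developed — in fact this lemma is close to standard Clifford-theoretic results on character extension, so the cleanest route is to cite the relevant machinery (the theory of character triples and the bijection between extensions of $\chi$ and the obstruction cocycle $\beta$) and then verify that our explicitly defined $\alpha,\beta,\chi'$ match that framework. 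The only real work is checking the normalization conventions align, after which all three equivalences follow from the standard statement that an invariant character extends if and only if its canonically associated element of $\Ho^2(K/N,\mathbb C^\times)$ is trivial.
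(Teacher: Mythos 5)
Your cycle $(1)\Rightarrow(2)\Rightarrow(3)\Rightarrow(1)$ has two sound links and one broken one. The sound ones: $(1)\Rightarrow(2)$ is the formal argument you give, and your $(3)\Rightarrow(1)$ construction $g\mapsto\chi'(g)\mu(gN)^{-1}$ does work --- writing $\beta=\partial\mu$ forces $\mu(N)=\beta(N,N)=\chi'(1)=1$ (with the standard normalization $1\in T$), so the resulting homomorphism restricts to $\chi$ on $N$; this makes explicit the direction that the paper simply cites from \cite[Theorem~11.7]{Isaacs-book}. The genuine gap is $(2)\Rightarrow(3)$. Your recipe is to build the correcting function $\mu:K/N\to\mathbb C^\times$ ``from $\chi'$ and the extension of $\chi$ to $[K,K]N$'' --- but, as you yourself note when proving $(1)\Rightarrow(2)$, $K/N$ abelian gives $[K,K]\subseteq N$, hence $[K,K]N=N$ and that ``extension'' is just $\chi$ itself. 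No new data is produced, and $\chi'$ together with $\chi$ alone cannot trivialize $\beta$: the function you need must encode character values on all of $K$, not just on $N$. What is actually required is an extension of $\chi$ from $N$ to $K$, i.e.\ statement (1), and that is exactly the step your proposal never establishes in either of the forms $(2)\Rightarrow(1)$ or $(2)\Rightarrow(3)$.

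The missing step is easy, and it is precisely the paper's argument for $(1)\Leftrightarrow(2)$: if $[K,K]\subseteq\mathrm{Ker}(\chi)$, then $\chi$ descends to a character of $N/[K,K]$, a subgroup of the finite abelian group $K/[K,K]$; since $\mathbb C^\times$ is divisible, characters of subgroups of finite abelian groups extend, and pulling back gives $\hat\chi$ on $K$ extending $\chi$. Once $\hat\chi$ exists, your own machinery finishes $(2)\Rightarrow(3)$: the function $\mu(gN):=\chi'(g)\hat\chi(g)^{-1}$ is well defined on cosets (for $g=kn$ with $k\in T$, $n\in N$ it equals $\hat\chi(k)^{-1}$) and satisfies $\partial\mu=\beta$ because $\hat\chi$ is a homomorphism. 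Note also that your fallback --- citing the character-triple machinery of \cite[Chapter~11]{Isaacs-book} --- only yields $(1)\Leftrightarrow(3)$, which is what the paper does for that equivalence; it does not supply $(2)\Rightarrow(1)$, so the closing claim that ``all three equivalences follow'' from that citation is an overclaim. With the divisibility argument inserted, your proof closes and is essentially the paper's proof, differing only in that you verify the cocycle direction by hand rather than by citation.
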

\begin{proof}
	The equivalence of (1) and (2) follows from the fact that $[K, K] \subseteq N$ and $N/[K,K]$ is an abelian group. The equivalence of (1) and (3) follows from \cite[Theorem~11.7]{Isaacs-book}.
\end{proof}
With the above notations, the following result can be obtained from \cite[Theorem~4.2]{GK85}.
\begin{lemma}
	\label{lemma:quotient-group-bijection}
	There exists a dimension preserving bijection between the sets $\mathrm{Irr}(K \mid \chi)$ and $\irr^{\beta^{-1}}(K/N)$.
\end{lemma}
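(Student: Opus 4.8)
The plan is to realize both sides of the asserted bijection as twists of one another by a single fixed one-dimensional projective representation of $K$, namely $\chi'$ itself. Following Isaacs \cite[Chapter~11]{Isaacs-book}, the function $\chi'$ is precisely the projective representation of $K$ associated to the invariant linear character $\chi$: it satisfies $\chi'|_N = \chi$ and $\chi'(x)\chi'(y) = \alpha(x,y)\,\chi'(xy)$, so $\chi'$ is an $\alpha$-representation of $K$ of degree $1$. Because $I_K(\chi) = K$, the factor set $\alpha$ is constant on the cosets of $N \times N$ in $K \times K$; this is exactly what makes the formula $\beta(gN,hN) = \alpha(g,h)$ well defined, and it yields $\inf(\beta) = \alpha$ as cocycles of $K$. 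The degree-one property of $\chi'$ is the source of the dimension-preserving statement we must establish.

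With this in hand I would write down the two maps explicitly. In one direction, given $\sigma \in \irr^{\beta^{-1}}(K/N)$, I inflate it to a projective representation $\inf(\sigma)$ of $K$ with factor set $\inf(\beta^{-1}) = \alpha^{-1}$, and form $\chi' \otimes \inf(\sigma)$; its factor set is $\alpha \cdot \alpha^{-1} = 1$, so this is an \emph{ordinary} representation of $K$, and since $\inf(\sigma)$ is scalar on $N$ its restriction to $N$ is a multiple of $\chi$, placing it in $\mathrm{Irr}(K\mid\chi)$, of degree $\dim\sigma$ because $\chi'$ has degree $1$. In the reverse direction, given $\rho \in \mathrm{Irr}(K\mid\chi)$, the invariance of $\chi$ forces $\rho|_N$ to be a multiple of $\chi$ by Clifford theory, so $(\chi')^{-1}\otimes\rho$ is trivial on $N$ and, being an $\alpha^{-1}$-representation inflated from $K/N$, descends to a $\beta^{-1}$-representation $\sigma$ of $K/N$. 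I would then check these two assignments are mutually inverse and carry irreducibles to irreducibles.

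The bijectivity and the preservation of irreducibility under this tensoring construction are exactly the content of \cite[Theorem~4.2]{GK85}, so once the data $(\chi',\alpha,\beta)$ is matched to the hypotheses there, the result follows; dimension preservation is then immediate from $\deg\chi' = 1$. I expect the only real point of care, rather than a conceptual obstacle, to be the bookkeeping of the cocycle direction: one must confirm that $\alpha$ genuinely descends to $K/N$ (where the hypothesis $I_K(\chi)=K$ is used) and that $\inf(\beta)=\alpha$, so that twisting by $\chi'$ converts $\beta^{-1}$-representations of $K/N$ into honest representations of $K$ and back. Aligning our normalization of $\chi'$ and its factor set with the conventions of \cite{GK85} and \cite[Chapter~11]{Isaacs-book} is the main thing to get right.
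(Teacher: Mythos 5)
Your proposal is correct and takes essentially the same route as the paper: the paper's entire proof of this lemma is an appeal to \cite[Theorem~4.2]{GK85} with exactly the data $(\chi',\alpha,\beta)$ you set up, and your explicit maps $\sigma \mapsto \chi' \otimes \inf(\sigma)$ and $\rho \mapsto (\chi')^{-1}\otimes \rho$ (descended to $K/N$) are precisely the twisting construction underlying that citation. The one point to watch is the cocycle-direction bookkeeping you already flag: the paper's definition of an $\alpha$-representation, $\rho(xy)=\alpha(x,y)\rho(x)\rho(y)$, is the inverse of Isaacs' factor-set convention $X(x)X(y)=\alpha(x,y)X(xy)$, which is exactly why $\beta^{-1}$ rather than $\beta$ appears in the statement.
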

Hence it boils down to understand the projective representations of the quotient group $K/N$. For our case, this quotient group will turn out to be an abelian group. The projective representations of abelian groups are well studied, and we use some of these results. In particular, we will use the following lemma without further reference.
\begin{lemma}
	\label{lemma:dimension-of-abelian-representations}
	Let $G$ be a finite abelian group. Then the following hold:
	\begin{enumerate}
		\item For any fixed $\alpha \in  Z^2(G, \mathbb C^\times)$, all irreducible representations in $\irr^\alpha(G)$ have equal dimension.
		\item For $|G| \in \{p^2, p^3\}$ and $1 \neq [\alpha ] \in \Ho^2(G, \mathbb C^\times)$, every $\rho \in \irr^\alpha(G)$ has dimension $p$.
		\item For $|G| = p^4$ and $1 \neq [\alpha ] \in \Ho^2(G, \mathbb C^\times)$, every $\rho \in \irr^\alpha(G)$ has dimension either $p$ or $p^2$.
	\end{enumerate}
	
\end{lemma}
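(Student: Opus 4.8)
The plan is to deduce all three parts from a single structural fact about abelian groups: for a finite abelian group $G$ and $\alpha\in Z^2(G,\bC^\times)$, every irreducible $\alpha$-representation has dimension $\sqrt{[G:\rad(\beta)]}$, where $\beta\colon G\times G\to\bC^\times$, $\beta(x,y)=\alpha(x,y)\alpha(y,x)^{-1}$, is the (alternating) commutator pairing attached to $\alpha$ and $\rad(\beta)=\{x\in G\mid \beta(x,y)=1 \text{ for all } y\in G\}$ is its radical. Granting this fact, part (1) is immediate; parts (2) and (3) will then follow once one observes that $[G:\rad(\beta)]$ is always a perfect square.

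To establish the structural fact I would work inside the semisimple twisted group algebra $\bC^\alpha[G]$, whose simple modules are exactly the irreducible $\alpha$-representations of $G$. Writing $u_g$ for the standard basis, one has $u_gu_h=\beta(g,h)\,u_hu_g$, so $u_g$ is central precisely when $g\in R:=\rad(\beta)$; hence $Z(\bC^\alpha[G])=\bC^\alpha[R]$ and the number of simple blocks equals $|R|$. Since $\beta|_R$ is trivial, $\alpha|_R$ is a symmetric cocycle, and because $\mathrm{Ext}^1(R,\bC^\times)=0$ (as $\bC^\times$ is divisible) it is a coboundary; thus $\bC^\alpha[R]\cong\bC[R]\cong\bC^{|R|}$. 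Decomposing the identity into the associated central idempotents $e_\theta$, indexed by $\theta\in\widehat R$, each block $e_\theta\bC^\alpha[G]$ is isomorphic to a twisted group algebra of $G/R$ whose commutator pairing is the form $\bar\beta$ induced by $\beta$, and $\bar\beta$ is nondegenerate by the very definition of $R$.

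The key subpoint is that a nondegenerate commutator pairing forces each block to be a full matrix algebra: the center of the twisted group algebra of $G/R$ is spanned by its $\alpha$-regular elements, which now form only the trivial subgroup, so the block is central simple, i.e.\ $\cong M_d(\bC)$ with $d^2=|G/R|=[G:R]$. In particular each of the $|R|$ blocks contributes a single irreducible of dimension $d=\sqrt{[G:R]}$, which proves (1) and simultaneously shows $[G:R]$ is a perfect square. Since $[\alpha]=1$ if and only if $\beta$ is trivial if and only if $R=G$, the hypothesis $[\alpha]\neq 1$ gives $[G:R]>1$. The proof then finishes by listing the perfect squares strictly greater than $1$ dividing $|G|$: for $|G|\in\{p^2,p^3\}$ the only possibility is $[G:R]=p^2$, whence $d=p$ and (2) holds; for $|G|=p^4$ the possibilities are $p^2$ and $p^4$, whence $d\in\{p,p^2\}$ and (3) holds.

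I expect the main obstacle to be the clean identification $Z(\bC^\alpha[G])=\bC^\alpha[R]$ together with the claim that every central block is a matrix algebra over $\bC$; both rest on standard but slightly delicate facts about twisted group algebras of abelian groups (semisimplicity, the counting of blocks by $\alpha$-regular classes, and the coboundary-triviality of symmetric cocycles). Once these are in place, the perfect-square reduction and the case analysis of (2)--(3) are purely combinatorial and require no further input.
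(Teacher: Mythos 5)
Your proof is correct, but it takes a genuinely different route from the paper. The paper disposes of the lemma in two lines by citation: part (1) is quoted from Backhouse's theorem on projective representations of abelian groups, and parts (2) and (3) follow from the standard facts that any $\rho\in\irr^\alpha(G)$ with $[\alpha]\neq 1$ satisfies $1<\dim(\rho)\leq\sqrt{|G|}$ and that $\dim(\rho)$ divides $|G|$, after which the case analysis on $|G|\in\{p^2,p^3,p^4\}$ is immediate. You instead prove, self-contained, the sharper structural statement $\dim(\rho)=\sqrt{[G:\rad(\beta)]}$ for every $\rho\in\irr^\alpha(G)$, via the identification $Z(\bC^\alpha[G])=\bC^\alpha[\rad(\beta)]\cong\bC[\rad(\beta)]$ (using that symmetric cocycles die against the divisible group $\bC^\times$), the block decomposition indexed by characters of $\rad(\beta)$, and the observation that each block is a twisted group algebra of $G/\rad(\beta)$ with nondegenerate pairing, hence a full matrix algebra; your case analysis then rests on $[G:\rad(\beta)]$ being a perfect square rather than on the divisibility bound. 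All the delicate steps you flag (semisimplicity, center equals span of $u_g$ with $g$ in the radical, blocks of a semisimple $\bC$-algebra counted by the dimension of the center) are correct as stated. What your approach buys is considerably more than the lemma: it also gives $|\irr^\alpha(G)|=|\rad(\beta)|$ and the criterion that $\alpha$ is non-degenerate exactly when $\rad(\beta)$ is trivial, which is in fact the content the paper needs later (Lemma~\ref{lemma:non-degenerate-cocycles} and the counts in Theorem~\ref{thm:number-non-degenerate}); what the paper's route buys is brevity, at the cost of leaning on two external results.
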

\begin{proof} Here (1) follows from \cite[Theorem~1]{Backhouse1972}. The assertions (2) and (3) follow from the fact that any $\rho \in \irr^\beta(G)$ for $[\beta] \neq 1$ satisfies $1 < \dim(\rho) \leq \sqrt{|G|}$ and $\dim(\rho)$ divides the order of $G$.	
\end{proof}

At this point, we mention the ways to identify the non-degenerate cocycles of a group $G$ by using the ordinary representations of its representation group $G^\star$.  Consider the stem extension (\ref{stem-extension-G}). Let  $\alpha$ be a $2$-cocycle of $G$ and $\chi \in \Hom(A,\mathbb C^\times)$  such that $\mathrm{tra}(\chi) = [\alpha]$.
\begin{lemma}
	\label{lemma:non-degenerate-cocycles}
	The following conditions are equivalent.
	\begin{enumerate}
		\item The 2-cocycle $\alpha$ is non-degenerate.
		\item $|\mathrm{Irr}^\alpha(G)| = 1$
		\item There exists $\rho \in \mathrm{Irr}^\alpha(G)$ such that $\dim(\rho) = \sqrt{|G|}$.
		\item  There exists $\rho \in \mathrm{Irr}(G^\star \mid \chi)$ such that $\dim(\rho) = \sqrt{|G|}$.
		\item $|\mathrm{Irr}(G^\star \mid \chi)| = 1$.
	\end{enumerate}
\end{lemma}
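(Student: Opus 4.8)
The plan is to split the five conditions into two clusters---(1)--(3), which live entirely in the twisted group algebra $\mathbb{C}^\alpha[G]$ and its irreducible $\alpha$-representations, and (4)--(5), which live in the ordinary representation theory of $G^\star$ above $\chi$---and then to bridge the two clusters using the dimension-preserving bijection of Theorem~\ref{inf}.

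First I would recall the Wedderburn structure of the twisted group algebra. For any $\alpha \in Z^2(G,\mathbb{C}^\times)$, the algebra $\mathbb{C}^\alpha[G]$ is semisimple of dimension $|G|$, and it decomposes as $\mathbb{C}^\alpha[G] \cong \bigoplus_{\rho \in \irr^\alpha(G)} M_{\dim\rho}(\mathbb{C})$, whence $\sum_{\rho \in \irr^\alpha(G)} (\dim\rho)^2 = |G|$. Condition (1), the simplicity of $\mathbb{C}^\alpha[G]$, is then equivalent to the presence of exactly one Wedderburn block, that is, to $|\irr^\alpha(G)| = 1$, which is condition (2). For (2) $\Leftrightarrow$ (3) I would use a short counting argument: if $|\irr^\alpha(G)| = 1$, the unique summand satisfies $(\dim\rho)^2 = |G|$, so $\dim\rho = \sqrt{|G|}$; conversely, if some $\rho$ has $\dim\rho = \sqrt{|G|}$, then $(\dim\rho)^2 = |G| = \sum_{\rho'} (\dim\rho')^2$ forces $\rho$ to be the only irreducible $\alpha$-representation. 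This disposes of (1) $\Leftrightarrow$ (2) $\Leftrightarrow$ (3).

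Next I would transport these statements across Theorem~\ref{inf}. Since $\tra(\chi) = [\alpha]$, that theorem furnishes a bijection $\irr^\alpha(G) \leftrightarrow \irr(G^\star \mid \chi)$ realized by inflation. The point I would make explicit is that this bijection is dimension-preserving: an ordinary representation of $G^\star$ lying above $\chi$ and its associated $\alpha$-representation of $G$ act on the same vector space and hence have equal degree. Granting this, the counting statement (2) matches (5), and the existence statement (3) matches (4), which closes the cycle of equivalences.

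The argument is essentially formal once these two ingredients are assembled. The only delicate point, and the one I expect to be the main (if mild) obstacle, is confirming that the correspondence of Theorem~\ref{inf} preserves dimensions; I would settle it by tracing through the inflation construction and observing that the lift of a projective $\alpha$-representation $\rho : G \to \mathrm{GL}(V)$ is a genuine representation of $G^\star$ on the same space $V$, so no change of degree occurs.
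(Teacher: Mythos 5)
Your proposal is correct and follows essentially the same route as the paper: the equivalences (1) $\Leftrightarrow$ (2) $\Leftrightarrow$ (3) come from the definition of non-degeneracy (simplicity of $\mathbb{C}^\alpha[G]$, which you unpack via the Wedderburn decomposition and the sum-of-squares count), and (2) $\Leftrightarrow$ (5), (3) $\Leftrightarrow$ (4) come from the inflation bijection of Theorem~\ref{inf}. The paper states these two steps without elaboration, so your only additions are the explicit counting argument and the observation that the bijection preserves dimensions, both of which are exactly the details the paper leaves implicit.
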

\begin{proof}
	Equivalence of (1), (2), and (3) follows from the definition of a non-degenerate $2$-cocycle. Equivalence of (2) and (5) as well as of (3) and (4) follows from Theorem~\ref{inf}.
\end{proof}
We will use this result to classify the non-degenerate cocycles of $\hfpp$ and $\hzpp$.

\subsection{Presentation and matrix form of groups $\mathbb H_3(R)$}\label{subsection-presentation}

The groups $\hfpp$ is of nilpotency class $2$ and have the following presentation:
\[
\hfpp=\langle  x_1, x_2, y_1, y_2 \mid [y_1,x_1]=z_1,  [y_1,x_2]=[y_2,x_1]=z_2, x_i^p=y_i^p=1, i=1,2   \rangle.
\]

For $\mathbb{F}_{p^2}$, let $k \in F_{p^2} \setminus \mathbb F_{p}$. Then $\mathbb{F}_{p^2} \cong \mathbb{F}_p[t]/(t^2-k)$ and therefore
\[
\bb{H}_{3}(\mathbb{F}_{p^2}) \cong \bb{H}_{3}(\mathbb{F}_p[t]/(t^2-k)).
\]
Now onwards, we will keep the $k$ as above fixed for $\mathbb{F}_{p^2}$ and use this wherever required. We have the following presentation of $\hfpk$:

\begin{eqnarray*}
	{\bb{H}}_{3}\Big(\frac{\mathbb{F}_p[t]}{(t^2-k)}\Big)=\langle  x_1, x_2, y_1, y_2\mid [y_1,x_1]=z_1, [y_2,x_2]=z_1^k,   [y_1,x_2]=[y_2,x_1]=z_2, \\
	x_i^p=y_i^p=1, i=1,2   \rangle.
\end{eqnarray*}

It is helpful to think of $\bb{H}_3(R)$ in its matrix form, that is as a group of $3 \times 3$ matrices with entries from the ring $R$. In the above two presentations, the element $x_1^{a_1}x_2^{a_2}y_1^{b_1}y_2^{b_2}z_1^{c_1}z_2^{c_2}$ corresponds to
$$
\begin{pmatrix}
	1 & (b_1,b_2) & (c_1,c_2)\\
	0 & 1& (a_1,a_2)\\
	0 & 0& 1
\end{pmatrix}
$$
in the matrix form of $\bb{H}_3(R)$.

\subsection{Schur multiplier of certain $p$-groups}
\label{section: Schur-multiplier-p-groups}
Let  $G$ be a $p$-group of nilpotency class $2$ with elementary abelian $G/G'$. In this section we recall the theory given in \cite[Section 3]{Blackburn} to compute the Schur multiplier of $G$.

Consider $G/G'$ and $G'$ as vector spaces over $\mathbb F_p$, denoted by $V$ and $W$ respectively.
For $g_1,g_2,g_3\in G$, let $U_1$ be the subspace of $V\otimes W$ spanned by the elements of the form
$$
\bar{g}_1\otimes [g_2,g_3]+ \bar{g}_2\otimes [g_3,g_1]+\bar{g}_3\otimes [g_1,g_2],
$$
where $\bar{g}_i=g_i G' \in V$, for $ i=1,2,3$. Let $U_2$ be the subspace of  $V\otimes W$ spanned by all $\bar{g}_1\otimes g_1^p$.
Now consider $U =U_1+U_2$.

We have the following result from  \cite[Theorem 3.1]{Blackburn}.
\begin{proposition}\label{blackburn}
Let $G$ be a $p$-group of nilpotency class $2$ such that $G/G'$ is  elementary abelian. Then
 $|\Ho^2(G,\mathbb C^\times)|=|\frac{V \wedge V}{W}||\frac{V \otimes W}{U}|$.
\end{proposition}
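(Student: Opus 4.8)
The plan is to apply the exact sequence (\ref{exact-sequence}) to the stem extension $1 \to W \to G \to V \to 1$, where $W = G'$ and $V = G/G'$; this is a stem extension because $G$ has class $2$, so $W \subseteq Z(G)\cap G'$. First I would record that $W$ is automatically elementary abelian: for $x,y\in G$ one has $[x,y]^p = [x^p,y] = 1$, since $x^p\in G'$ (as $G/G'$ is elementary abelian) and $G'\subseteq Z(G)$. Hence $V$ and $W$ are $\mathbb F_p$-vector spaces, $\Ho^2(V,\bC^\times)\cong V\wedge V$, $\Ho^2(W,\bC^\times)\cong W\wedge W$, and $|\Hom(W,\bC^\times)| = |W|$. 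Exactness of (\ref{exact-sequence}) at $\Ho^2(G,\bC^\times)$ then gives the factorisation
\[
|\Ho^2(G,\bC^\times)| = |\mathrm{im}(\inf)|\cdot|\mathrm{im}(\res,\chi)|,
\]
and the whole proof reduces to computing the two factors.

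The first factor is straightforward. Since (\ref{exact-sequence}) exhibits $\tra$ as an injection (a feature of stem extensions, as every homomorphism $G\to\bC^\times$ vanishes on $G'=W$), I get $|\ker(\inf)| = |\mathrm{im}(\tra)| = |\Hom(W,\bC^\times)| = |W|$. Using that the commutator map $V\wedge V\to W$, $\bar g\wedge\bar h\mapsto [g,h]$, is a well-defined surjection (by class $2$), this yields
\[
|\mathrm{im}(\inf)| = \frac{|\Ho^2(V,\bC^\times)|}{|W|} = \frac{|V\wedge V|}{|W|} = \Big|\tfrac{V\wedge V}{W}\Big|.
\]

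The heart of the argument, and where I expect the real difficulty, is to show $|\mathrm{im}(\res,\chi)| = |V\otimes W/U|$. Recalling that $\chi([\alpha])(\bar g\otimes a) = \alpha(g,a)\alpha(a,g)^{-1}$ defines an element of $(V\otimes W)^\ast\cong\Hom(V\otimes W,\bC^\times)$, the first goal is to show that $\chi([\alpha])$ must annihilate $U = U_1 + U_2$. The relations spanning $U_1$ should come from the Hall--Witt identity applied to the central commutator values $[g_i,g_j]\in W$, and those spanning $U_2$ from evaluating the cocycle relations against the $p$-th power map $g\mapsto g^p$; in both cases the identity translates into the statement that $\chi([\alpha])$ kills the corresponding spanning vectors. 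The converse --- that every functional annihilating $U$ is realised by some $2$-cocycle on $G$ --- is the harder half: I would build such cocycles explicitly from a section $V\to G$, so that the associativity obstruction lives in $\Ho^3(V,\bC^\times)$, and then use the Hall--Witt and $p$-power relations to see that this obstruction vanishes precisely on the annihilator $U^\perp$.

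The subtlest point, and the main obstacle, is the interplay between the two coordinates of $(\res,\chi)$: the restriction coordinate lands in $\Ho^2(W,\bC^\times)=W\wedge W$, and one must determine exactly how much of it survives rather than simply adding a factor of $|W\wedge W|$. In spectral-sequence terms this is the computation of the surviving graded pieces $E_\infty^{1,1}$ (cut out from $\Hom(V\otimes W,\bC^\times)$ by the differential into $\Ho^3(V,\bC^\times)$, encoding $U_1$) and $E_\infty^{0,2}$ (cut out from $W\wedge W$ by the higher differentials, encoding $U_2$), and showing that the product of their orders collapses to $|V\otimes W|/|U|$. Establishing this matching --- that $U_1$ and $U_2$ account for all the relevant differentials --- is the technical core of the statement. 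Granting it, multiplying the two factors gives $|\Ho^2(G,\bC^\times)| = \big|\tfrac{V\wedge V}{W}\big|\,\big|\tfrac{V\otimes W}{U}\big|$.
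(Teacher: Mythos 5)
Your first factor is correct and complete: $W=G'$ is elementary abelian (since $[x,y]^p=[x^p,y]=1$ in a class-$2$ group with elementary abelian $G/G'$), the extension $1\to W\to G\to V\to 1$ is a stem extension, so $\tra$ in \eqref{exact-sequence} is injective and $|\mathrm{im}(\inf)|=|V\wedge V|/|W|$. The problem is the second factor. The equality $|\mathrm{im}(\res,\chi)|=|V\otimes W|/|U|$ is not a technical step you may defer --- it \emph{is} the proposition --- and your proposal never proves it: you sketch what ought to be true and then write ``granting it.'' Note also that the paper offers no internal proof to compare against: Proposition \ref{blackburn} is quoted from Blackburn and Evens \cite[Theorem 3.1]{Blackburn}, so what you are attempting is a reproof of that cited theorem, and your attempt stops exactly where its actual content begins. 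The sequence \eqref{exact-sequence} cannot supply what is missing, because it has no term to the right of $\Ho^2(A,\bC^\times)\oplus\Hom(G/G'\otimes A,\bC^\times)$: exactness at $\Ho^2(G,\bC^\times)$ identifies $\mathrm{im}(\res,\chi)$ with $\Ho^2(G,\bC^\times)/\mathrm{im}(\inf)$, but gives no description of which subgroup of the right-hand term this image is, hence no bound in either direction on its order.

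A complete argument along your lines needs three statements. (i) $\mathrm{im}(\chi)\subseteq\mathrm{Ann}(U)$: your Hall--Witt idea does work here; writing $\chi([\alpha])(\bar g\otimes a)=[\hat g,\hat a]$ for lifts in a central extension $1\to\bC^\times\to\hat G\to G\to 1$, one notes $\hat G$ has nilpotency class at most $3$, so the Jacobi-type consequence of Hall--Witt kills the generators of $U_1$, while $[\hat g,\hat g^{\,p}]=1$ kills those of $U_2$. (ii) The restriction coordinate vanishes identically: contrary to your expectation, this is not the subtle point --- the preimage of $W$ in $\hat G$ equals $\gamma_2(\hat G)\cdot\bC^\times$, which is abelian because $[\gamma_2(\hat G),\gamma_2(\hat G)]\subseteq\gamma_4(\hat G)=1$, and since $\bC^\times$ is divisible every abelian extension of $W$ by $\bC^\times$ splits; hence $\res$ is the zero map and no bookkeeping of how $E_\infty^{0,2}$ and $E_\infty^{1,1}$ share the factor is needed. (iii) $\chi$ surjects onto $\mathrm{Ann}(U)$: this is the genuine core, and it is entirely absent from your proposal. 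It requires constructing, for each homomorphism $V\otimes W\to\bC^\times$ annihilating $U$, an actual $2$-cocycle of $G$ (equivalently, a central extension of $G$) inducing it, verified against the commutator and $p$-th power relations of $G$; this explicit construction is what the Blackburn--Evens proof consists of, and asserting that the spectral-sequence differentials ``collapse'' to $|V\otimes W|/|U|$ is a restatement of the claim, not an argument for it. So the proposal has a genuine gap at (iii), and a misdiagnosis at (ii).
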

We will continue to use these notations in the next section.

\begin{lemma}
	\label{lemma: elementary-abelian-shur-multiplier}
Let $p$ be an odd prime and $G$ be a $p$-group of exponent $p$, of nilpotency class $2$. Then $\Ho^2(G,\mathbb C^\times)$ is an elementary abelian $p$-group.
\end{lemma}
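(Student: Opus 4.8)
The plan is to reduce the statement to a bound on the exponent and then establish that bound by commutator calculus in a representation group. Since $G$ is a finite $p$-group, $\Ho^2(G,\bC^\times)$ is a finite abelian $p$-group; hence it is elementary abelian if and only if its exponent divides $p$. So I would aim to prove $\exp\Ho^2(G,\bC^\times)\mid p$. Note that Proposition~\ref{blackburn} only computes the order $|\Ho^2(G,\bC^\times)|$, not its isomorphism type, so the order computation alone does not suffice (an extension of two elementary abelian $p$-groups can have exponent $p^2$); the real content here is the exponent bound.

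Fix a representation group $G^\star$ as in the stem extension (\ref{stem-extension-G}), so $1\to A\to G^\star\to G\to 1$ with $A\cong\Ho^2(G,\bC^\times)$ and $A\subseteq Z(G^\star)\cap\gamma_2(G^\star)$. Because $G$ has class $2$, $\gamma_3(G^\star)$ maps into $\gamma_3(G)=1$, so $\gamma_3(G^\star)\subseteq A\subseteq Z(G^\star)$; consequently $\gamma_4(G^\star)=[\gamma_3(G^\star),G^\star]=1$ and $G^\star$ has class at most $3$. In particular $\gamma_2(G^\star)$ is abelian, since $[\gamma_2(G^\star),\gamma_2(G^\star)]\subseteq\gamma_4(G^\star)=1$. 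As $A\subseteq\gamma_2(G^\star)$, it is enough to show $\gamma_2(G^\star)$ has exponent dividing $p$. Choosing lifts $x_1,\dots,x_d\in G^\star$ of a generating set of $G$ consisting of elements of order $p$ (these $x_i$ generate $G^\star$, because $A\subseteq\gamma_2(G^\star)$ lies in the Frattini subgroup), each $x_i^{p}$ lies in $\ker(G^\star\to G)=A\subseteq Z(G^\star)$. Since $G^\star$ has class at most $3$, the abelian group $\gamma_2(G^\star)$ is generated by the commutators $[x_i,x_j]$ together with the triple commutators $[x_i,x_j,x_k]$, so it suffices to show each of these has order dividing $p$.

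The two computations use the standard class-$3$ collection identities $[x,y^{n}]=[x,y]^{n}[x,y,y]^{\binom{n}{2}}$ and $[x^{n},y]=[x,y]^{n}[x,y,x]^{\binom{n}{2}}$, valid whenever the relevant triple commutators are central. For a weight-$3$ generator, put $u=[x_i,x_j]\in\gamma_2(G^\star)$; then $[u,x_k]=[x_i,x_j,x_k]\in\gamma_3(G^\star)$ is central, so $[u,x_k^{p}]=[u,x_k]^{p}=[x_i,x_j,x_k]^{p}$, while $x_k^{p}\in Z(G^\star)$ forces $[u,x_k^{p}]=1$; hence $[x_i,x_j,x_k]^{p}=1$ and $\gamma_3(G^\star)$ has exponent dividing $p$. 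For a weight-$2$ generator, the second identity gives $1=[x_i^{p},x_j]=[x_i,x_j]^{p}[x_i,x_j,x_i]^{\binom{p}{2}}$, so $[x_i,x_j]^{p}=[x_i,x_j,x_i]^{-\binom{p}{2}}$. Since $[x_i,x_j,x_i]\in\gamma_3(G^\star)$ already has order dividing $p$ and $p\mid\binom{p}{2}=p(p-1)/2$ for odd $p$, the right-hand side is trivial, whence $[x_i,x_j]^{p}=1$. Thus every generator of the abelian group $\gamma_2(G^\star)$ has order dividing $p$, so $\exp\gamma_2(G^\star)\mid p$ and therefore $\exp A\mid p$, proving that $\Ho^2(G,\bC^\times)$ is elementary abelian.

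I expect the main obstacle to be exactly the control of the class-$3$ correction terms, that is, making the $\binom{p}{2}$ factor disappear. This is where the oddness of $p$ is indispensable: for odd $p$ one has $p\mid\binom{p}{2}$, and combined with the prior fact that $\gamma_3(G^\star)$ has exponent dividing $p$, this kills the correction term; for $p=2$ the coefficient $\binom{2}{2}=1$ survives and the argument (together with the conclusion) can fail. A secondary point to get right is the order of the two computations: the weight-$3$ bound must be proved first, since the weight-$2$ bound relies on $\gamma_3(G^\star)$ already having exponent dividing $p$.
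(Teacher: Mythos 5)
Your proof is correct, and its computational heart coincides with the paper's: both arguments bound the exponent via the class-$3$ collection identity $[x^p,y]=[x,y]^p[x,y,x]^{\binom{p}{2}}$ applied to elements whose $p$-th powers are central, with the oddness of $p$ (i.e. $p\mid\binom{p}{2}$) killing the correction term. The difference is the model of the multiplier in which the calculation is run. The paper works with a free presentation $G=F/\mathcal{R}$ and Hopf's formula $\Ho^2(G,\bC^\times)\cong (F'\cap\mathcal{R})/[F,\mathcal{R}]$ (citing \cite[Theorem~2.4.6]{Karpilovsky}); since $G$ has exponent $p$ and class $2$, the elements $x^p$ and the subgroup $\gamma_3(F)$ lie in $\mathcal{R}$, hence become central modulo $[F,\mathcal{R}]$, and the chain $1\equiv[x^p,y]\equiv[x,y]^p[[x,y],x]^{\binom{p}{2}}\equiv[x,y]^p\pmod{[F,\mathcal{R}]}$ yields $(F')^p\subseteq[F,\mathcal{R}]$. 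You instead run the identical calculus inside a representation group $G^\star$ from the stem extension (\ref{stem-extension-G}), so you must additionally invoke Schur's existence theorem for $G^\star$ (plus a generating-set argument for $\gamma_2(G^\star)$ by basic commutators), whereas the paper needs only Hopf's formula and gets the uniform statement $(F')^p\subseteq[F,\mathcal{R}]$ without choosing generators. In exchange, your write-up is more complete at exactly the delicate point: the paper's final congruence silently uses that $[[x,y],x]^{\binom{p}{2}}$ vanishes modulo $[F,\mathcal{R}]$, which requires first knowing that triple commutators have order dividing $p$ there (proved just as in your weight-$3$ step); your insistence on establishing the weight-$3$ bound before the weight-$2$ bound supplies precisely this missing justification. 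Your opening observation is also apt: Proposition~\ref{blackburn} only determines $|\Ho^2(G,\bC^\times)|$, so the exponent bound is the genuine content of the lemma.
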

\begin{proof}
	If $G$ has a free presentation $F/\mathcal{R}$, then $\Ho^2(G,\mathbb C^\times) \cong \Ho_2(G,\mathbb Z)\cong  \frac{F'\cap \mathcal{R}}{[F,\mathcal{R}]}$, follows from \cite[Theorem 2.4.6]{Karpilovsky}.
For $x,y \in F$ and for odd $p$,
$$1 \equiv [x^p,y] \equiv [x,y]^p[[x,y],x]^{p\choose 2} \equiv [x,y]^p~\pmod{[F,\mathcal{R}]}$$
imply that  $(F')^p \subset [F,\mathcal{R}]$.  Therefore, for odd $p$,  $\Ho^2(G,\mathbb C^\times)$ is an elementary abelian $p$-group.
\end{proof}

The following result will help us to describe the 2-cocycles of the group $\hfpp$.
\begin{proposition}\label{exponent-p}
	Let $p$ be an odd prime and $G$ be a $p$-group of exponent $p$ and of nilpotency class $2$ such that there is a central subgroup $Z\subseteq Z(G)\cap G'$ with the property
	$$\big |\Ho^2(G,\mathbb C^\times)\big |=\Big |\frac{\Ho^2(G/Z,\mathbb C^\times)}{\Hom(Z,\mathbb C^\times)}\Big|\big|\Hom\big(G/G'\otimes Z,\mathbb C^\times\big)\big|.$$
	Then
	$$\Ho^2(G,\mathbb C^\times) \cong \frac{\Ho^2(G/Z,\mathbb C^\times)}{\Hom(Z,\mathbb C^\times)} \times \Hom\big(G/G'\otimes Z,\mathbb C^\times\big).$$
\end{proposition}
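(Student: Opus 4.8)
The plan is to feed the central subgroup $A = Z$ into the five-term exact sequence \eqref{exact-sequence} and extract from it a short exact sequence of elementary abelian $p$-groups, which then splits for dimension reasons. Since $Z \subseteq Z(G) \cap G'$, the extension $1 \to Z \to G \to G/Z \to 1$ is a stem extension, so the transgression $\tra : \Hom(Z, \bC^\times) \to \Ho^2(G/Z, \bC^\times)$ is injective and $\inf$ induces an injection $\tfrac{\Ho^2(G/Z, \bC^\times)}{\Hom(Z, \bC^\times)} \hookrightarrow \Ho^2(G, \bC^\times)$ whose image equals $\ker(\res, \chi) = \ker(\res) \cap \ker(\chi)$.

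The crucial observation, and the one place where the hypotheses on $Z$ really enter, is that $\res : \Ho^2(G, \bC^\times) \to \Ho^2(Z, \bC^\times)$ is the zero map. Indeed, for a $2$-cocycle $\alpha$ the assignment $\chi([\alpha])(gG' \otimes a) = \alpha(g,a)\alpha(a,g)^{-1}$ is a well-defined homomorphism on $G/G' \otimes Z$, so the pairing $g \mapsto \alpha(g,a)\alpha(a,g)^{-1}$ is trivial on $G'$. Because $Z \subseteq G'$, specializing $g = b \in Z$ gives $\alpha(a,b) = \alpha(b,a)$ for all $a,b \in Z$; that is, the alternating form classifying $\res([\alpha]) = [\alpha|_{Z}] \in \Ho^2(Z,\bC^\times)$ is trivial, whence $\res([\alpha]) = 0$.

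Granting this, $\ker(\res, \chi) = \ker(\chi)$ and the image of $(\res,\chi)$ lands in $\Hom(G/G' \otimes Z, \bC^\times)$, so \eqref{exact-sequence} collapses to the exact sequence
\begin{equation*}
1 \longrightarrow \frac{\Ho^2(G/Z, \bC^\times)}{\Hom(Z, \bC^\times)} \xrightarrow{\ \inf\ } \Ho^2(G, \bC^\times) \xrightarrow{\ \chi\ } \Hom(G/G' \otimes Z, \bC^\times).
\end{equation*}
Exactness then yields $|\mathrm{im}(\chi)| = |\Ho^2(G, \bC^\times)|\cdot|\Hom(Z,\bC^\times)|/|\Ho^2(G/Z, \bC^\times)|$, which by the order equality in the hypothesis equals exactly $|\Hom(G/G' \otimes Z, \bC^\times)|$. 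Hence $\chi$ is surjective, and we obtain a genuine short exact sequence with kernel $\tfrac{\Ho^2(G/Z, \bC^\times)}{\Hom(Z, \bC^\times)}$ and cokernel $\Hom(G/G' \otimes Z, \bC^\times)$.

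To finish, I would invoke Lemma \ref{lemma: elementary-abelian-shur-multiplier}: since $p$ is odd and $G$ has exponent $p$ and nilpotency class $2$, the group $\Ho^2(G, \bC^\times)$ is elementary abelian, i.e. an $\mathbb{F}_p$-vector space. Every short exact sequence of $\mathbb{F}_p$-vector spaces splits, so $\Ho^2(G, \bC^\times)$ decomposes as the asserted direct product. The only step requiring genuine care is the vanishing of $\res$ in the second paragraph; once that is in place, the order hypothesis forces surjectivity of $\chi$ and the remaining steps are formal consequences of the exact sequence and the splitting of vector-space extensions.
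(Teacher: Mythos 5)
Your proof is correct and follows essentially the same route as the paper: apply the five-term sequence \eqref{exact-sequence} to the stem extension $1 \to Z \to G \to G/Z \to 1$, extract the short exact sequence $1 \to \Ho^2(G/Z,\bC^\times)/\Hom(Z,\bC^\times) \to \Ho^2(G,\bC^\times) \xrightarrow{\chi} \Hom(G/G'\otimes Z,\bC^\times) \to 1$, and split it because $\Ho^2(G,\bC^\times)$ is elementary abelian by Lemma~\ref{lemma: elementary-abelian-shur-multiplier}. If anything, you are more careful than the paper, whose proof asserts that short exact sequence directly from \eqref{exact-sequence}: your two supplementary observations --- that $\res$ vanishes because $Z \subseteq G'$ forces $\alpha|_{Z\times Z}$ to be symmetric, hence a coboundary, and that the order hypothesis then forces $\chi$ to be surjective --- are precisely the details needed to make that assertion rigorous.
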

\begin{proof}
By \eqref{exact-sequence}, we have the following exact sequence
	$$
	1 \to \frac{\Ho^2(G/Z,\mathbb C^\times)}{\Hom(Z,\mathbb C^\times)}  \xrightarrow{\inf}   \Ho^2(G,\mathbb C^\times) \xrightarrow{\chi} \Hom\big(G/G'\otimes Z,\mathbb C^\times\big) \to 1.
	$$
	The group $\Ho^2(G,\mathbb C^\times)$ is an elementary abelian $p$-group by Lemma~\ref{lemma: elementary-abelian-shur-multiplier}. Therefore the result follows.
\end{proof}


\section{Schur multiplier of $\mathbb H_3(R)$}\label{Schur-Multiplier}

In this section we prove Theorem \ref{SchurM}.

\begin{proof}[Proof of Theorem \ref{SchurM}.]
By Lemma \ref{lemma: elementary-abelian-shur-multiplier} it follows that, for odd prime $p$,  $\Ho^2(G,\mathbb C^\times)$ is an elementary abelian $p$-group.

\subsection{\bf $G=\hfpp$} This is a nilpotent group of nilpotency class $2$ and is of exponent $p$. By  using the notations of Section~\ref{section: Schur-multiplier-p-groups}, we have $U_2=1$ and so $U=U_1$.
Now from Proposition \ref{blackburn},
$$|\Ho^2(G,\mathbb C^\times)|= |\frac{(G/G' \wedge G/G')}{G'}||\frac{(G/G' \otimes G')}{U}|=\frac{p^{12}}{|U|},$$
where $U$ is the subspace of $(G/G' \otimes G')$ generated by the elements of the form
$$\bar{x}_1  \otimes [x_2,x_3]+ \bar{x}_2 \otimes[x_3,x_1]+\bar{x}_3\otimes [x_1,x_2] \text{ for }  \bar{x}_i=x_iG'\in G/G'.$$
Using the presentation of $G$, we have
\begin{eqnarray*}
	U  &=& \langle \bar{y}_1\otimes [y_2,x_1]+\bar{y}_2\otimes [x_1,y_1],   \bar{x}_1\otimes [y_1,x_2]+\bar{x}_2 \otimes [x_1,y_1], \\
	&&\bar{y}_2 \otimes [y_1,x_2] , \bar{x}_2\otimes [y_2,x_1]\rangle\\
&=&\langle  (\bar{y}_1\otimes z_2+ \bar{y}_2\otimes z_1^{-1}) ,
(\bar{x}_1\otimes z_2+\bar{x}_2\otimes z_1^{-1}), \bar{y}_2\otimes z_2, \bar{x}_2\otimes z_2  \rangle.
\end{eqnarray*}
Therefore  $|U|=p^4$. By Lemma~\ref{lemma: elementary-abelian-shur-multiplier},
$$
\Ho^2(G, \mathbb C^\times) \cong (\mathbb Z/p\mathbb Z)^8.$$

\begin{remark} The similar method works for $\Ho^2\Big(\mathbb{H}_{3}\big(\frac{\mathbb{F}_p[t]}{( t^r)}\big),\mathbb C^\times\Big)$ for $r \in \mathbb N$. One can prove that for  $r \in \mathbb N$,
$$\Ho^2\Big(\mathbb{H}_{3}\big(\frac{\mathbb{F}_p[t]}{( t^r)}\big),\mathbb C^\times\Big) \cong (\mathbb Z/p\mathbb Z)^{2r^2}. $$
\end{remark}

\subsection{\bf $G=\mathbb{H}_{3}(\mathbb{F}_{p^2})$}

By Lemma~\ref{lemma: elementary-abelian-shur-multiplier} and Proposition \ref{blackburn}, $\Ho^2(G,\mathbb C^\times)$ is an elementary abelian group and
$$|\Ho^2(G,\mathbb C^\times)|= \frac{p^{12}}{|U|},$$
where $$U=\big\langle \big(\bar{y}_1\otimes z_1^k+\bar{y}_2 \otimes z_2^{-1}\big), \big(\bar{y}_2\otimes z_1+ \bar{y}_1 \otimes  z_2^{-1}\big), \big(\bar{x}_1\otimes z_1^k+\bar{x}_2\otimes z_2^{-1}\big),  \big(\bar{x}_2\otimes z_1+ \bar{x}_1 \otimes  z_2^{-1} \big)\big\rangle.$$
Therefore $|U|=p^4$
and  we have $$\Ho^2\Big(\mathbb{H}_{3}\big(\frac{\mathbb{F}_p[t]}{(t^2-k)}\big),\mathbb C^\times\Big) \cong (\mathbb Z/p\mathbb Z)^8.$$

\subsection{$\mathbb{H}_{3}(\mathbb Z/p\mathbb Z \times \mathbb Z/p\mathbb Z)$ {and}  $\mathbb{H}_{3}(\mathbb Z/p^2\mathbb Z)$ }
Since $\mathbb{H}_{3}(\mathbb Z/p\mathbb Z \times \mathbb Z/p\mathbb Z) \cong \mathbb{H}_{3}(\mathbb{Z}/p\mathbb{Z})\times \mathbb{H}_{3}(\mathbb{Z}/p\mathbb{Z})$, result follows from Theorem \ref{directproduct}. For the group  $\mathbb{H}_{3}(\mathbb Z/p^2\mathbb Z)$, result follows from \cite[Theorem 1.1]{Urban}.
\end{proof}


\section{Representation group of $\mathbb H_3(R)$}\label{section-representation group}
\subsection{$G=\mathbb{H}_{3}\big( \fpp \big)$}
In this section, we prove Theorem \ref{representation-group}.

	\begin{proof}[Proof of Theorem \ref{representation-group}.]
Recall that $G^\star$ is given by the following:
		\begin{eqnarray}\label{G*}
			G^\star&=&\langle  x_i,y_i , i=1,2 \mid [y_1,x_1]=z_1,  [y_1,x_2]=z_2, [x_2,x_1]=w_1,[y_2,x_1]z_2^{-1}=w_2,
			\nonumber \\
			&&  [y_2,x_2]=w_3,[y_2,y_1]=w_4,  [z_1, x_i]=v_i,  [z_1, y_i]=u_i, [z_2,x_1]=[z_1, x_2],  \nonumber \\
			&&  [z_2,y_1]=[z_1, y_2], x_i^p=y_i^p=z_i^p=w_j^p=1, 1 \leq j \leq 4  \rangle.
		\end{eqnarray}
		If $G^\star$ is  a group of order $p^{14}$, then we have the following stem extension
		$$1 \to Z \to G^\star\to G\to 1$$
		for $Z=\langle u_i,v_i,w_j, 1\leq i \leq 2, 1 \leq j \leq 4 \rangle\cong (\mathbb Z/p\mathbb Z)^8$.
		Since $\Ho^2(G,\mathbb C^\times)\cong (\mathbb Z/p\mathbb Z)^8$,  $G^\star$ is a representation  group of $G$. We now proceed to prove that $G^\star$ is of order $p^{14}$.

	Consider the free group $F$ generated by four elements $x_1,x_2, y_1,y_2$.
	The basic commutators $[x,y]$ form a basis for the free abelian group $\gamma_2(F)/\gamma_3(F)$.
	We fix the following notations:
	$$[y_1,x_1]=z_1, [y_1,x_2]=z_2, [y_2,x_1]z_2^{-1}=w_2, [x_2,x_1]=w_1, [y_2,x_2]=w_3, [y_2,y_1]=w_4,$$
	and
	$$
	 [z_1, x_i]=v_i, [z_1, y_i]=u_i, i=1,2.$$ To prove our result, we show that $G^\star$ is isomorphic to a certain quotient group of $F$.
	
	Let $H_1= F/\langle \gamma_3(F), F^p, w_1,w_2, w_3, w_4\rangle$. Then
	\begin{eqnarray*}
		H_1 &\cong& \langle  y_1,y_2,x_1,x_2 \mid [y_1,x_1]=z_1,  [y_1,x_2]=[y_2,x_1]=z_2, x_i^p=y_i^p=1 \rangle\\
		&\cong&  \hfpp.
	\end{eqnarray*}
	is a group of order $p^6$.
	
We have the following identities in $F$ modulo $\langle \gamma_4(F),  [w_j, x_i],  [w_j, y_i], ~i=1,2, ~1\leq j \leq 4 \rangle$.
	\begin{eqnarray*}
		&&[x_1^{-1}, y_1^{-1}, x_2]=[z_1,x_2]^{-1},~~[x_1^{-1}, y_1^{-1}, y_2]=[z_1,y_2]^{-1}, \\
		&&[x_1^{-1}, y_2^{-1}, x_2]=[z_2,x_2]^{-1},~~[x_2^{-1}, y_1^{-1}, y_2]=[z_2,y_2]^{-1}, \\
		&&[y_1, x_2^{-1}, x_1^{-1}]=[z_2,x_1].
	\end{eqnarray*}

	Observe that,  using Hall-Witt identity we  have the following relations in $F$ modulo $\langle \gamma_4(F),  [w_j, x_i],  [w_j, y_i], ~i=1,2, ~1\leq j \leq 4 \rangle$.
	\begin{eqnarray}\label{HallWitt}
		&&[z_2,x_1]=[z_1,x_2], ~~~[z_2,y_1]=[z_1,y_2], ~~~[z_2,y_2]=[z_2,x_2]=1.
	\end{eqnarray}
	Now consider the group
	\[
	H_2=F/\langle \gamma_4(F), F^p, w_1,w_2, w_3, w_4, [z_1,x_2], [z_1,y_i],  i=1,2 \rangle.
	\]
	By (\ref{HallWitt}), we have
	\[
	H_2 \cong \langle  x_1, x_2, y_1, y_2 \mid [y_1,x_1]=z_1, [y_1,x_2]=[y_2,x_1]=z_2, [z_1,x_1]=v_1, x_i^p=1 \rangle.
	\]
	and $H_2/\langle v_1 \rangle \cong H_1$. Hence $H_2$ is of order $p^7$. We proceed further step by step considering the groups
	\[
	H_3=F/\langle \gamma_4(F), F^p,  w_1,w_2, w_3, w_4,  [z_1,y_i],  i=1,2 \rangle,
	\]
	\[
	H_4=F/\langle \gamma_4(F), F^p,  w_1,w_2, w_3, w_4, [z_1,y_2] \rangle,
	\]
	\[
	H_5=F/\langle \gamma_4(F), F^p,  w_1,w_2, w_3, w_4\rangle.
	\]
	Using the identities in (\ref{HallWitt}), we see  that
	\begin{eqnarray*}
		H_5 &\cong& \langle  x_1, x_2, y_1, y_2 \mid [y_1,x_1]=z_1, [y_1,x_2]=[y_2,x_1]=z_2, [z_1,x_1]=v_1, [z_1,y_1]=u_1,\\
		&&\hspace{5 cm} [z_2,x_1]=[z_1,x_2]=v_2, [z_2,y_1]=[z_1,y_2]=u_2, x_i^p=1 \rangle.
	\end{eqnarray*}
	and $H_5/\langle u_2 \rangle \cong H_4, H_4/\langle u_1 \rangle \cong H_3, H_3/\langle v_2 \rangle \cong H_2$.
	Hence $H_5$ is of order $p^{10}$.

Now consider the group
 \[
K_1=F/\langle \gamma_4(F), F^p, w_1,w_3, w_4, [w_2,x_i], [w_2,y_i], i=1,2 \rangle.
\]
By (\ref{HallWitt}), we have  $[z_2,x_1]=[z_1,x_2]=v_2, ~[z_2,y_1]=[z_1,y_2]=u_2, ~[z_2,y_2]=[z_2,x_2]=1.$
So
\begin{eqnarray*}
K_1 &\cong&  \langle  y_1, y_2, x_1,x_2 \mid [y_1,x_1]=z_1, [y_1,x_2]=z_2, [y_2,x_1]z_2^{-1}=w_2, [z_1,x_1]=v_1, [z_1,y_1]=u_1,\\
&&\hspace{5 cm} [z_2,x_1]=[z_1,x_2]=v_2, [z_2, y_1]=[z_1,y_2]=u_2, x_i^p=1 \rangle.
\end{eqnarray*}
Therefore $K_1/\langle w_2 \rangle \cong H_5$. Hence $K_1$ is of order $p^{11}$. In the similar way, step by step, we consider the groups
\[
K_2=F/\langle \gamma_4(F), F^p,w_3, w_4, [w_j,x_i], [w_j,y_i], ~i=1,2, ~j=1,2 \rangle,
\]
\[
K_3=F/\langle \gamma_4(F), F^p, w_4, [w_j,x_i], [w_j,y_i], ~i=1,2, ~j=1,2,3 \rangle,
\]
\[
K_4=F/\langle \gamma_4(F), F^p, [w_j,x_i], [w_j,y_i], ~i=1,2, ~j=1,2,3,4 \rangle,
\]
and  we see that $K_2/\langle w_1 \rangle \cong K_1, ~K_3/\langle w_3 \rangle \cong K_2, ~K_4/\langle w_4 \rangle \cong K_3$. Finally, we see that $K_4 \cong G^\star$.
Hence $G^\star$ is of order $p^{14}$.
\end{proof}

In the following corollary, we point out the cardinality of a specific group that appeared in the above proof. We require it for later.

\begin{corollary}
	Consider the following  nilpotent class three group:
	\begin{eqnarray}\label{tildeG}
		\tilde{G}&=& \langle   x_i,y_i , i=1,2 \mid [y_1,x_1]=z_1,  [y_1,x_2]=[y_2,x_1]=z_2,  [z_1, x_i]=v_i, [z_1, y_i]=u_i,  \nonumber \\
		&&  \hspace{3cm}  [z_2,x_1]=[z_1, x_2],   [z_2,y_1]=[z_1, y_2],  x_i^p=y_i^p=z_i^p=1 \rangle.
	\end{eqnarray}	
	The group $\tilde{G}$ is of order $p^{10}$. 	
\end{corollary}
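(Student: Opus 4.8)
The plan is to recognize $\tilde{G}$ as the group $H_5$ that already occurred inside the proof of Theorem~\ref{representation-group}, equivalently as the central quotient $G^\star/\langle w_1,w_2,w_3,w_4\rangle$; once this identification is made, the order was essentially computed there. Thus the corollary reduces to a careful comparison of two presentations together with the order count already performed for $H_5$.

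First I would start from the presentation \eqref{G*} of $G^\star$ and impose $w_1=w_2=w_3=w_4=1$. The relation $[y_2,x_1]z_2^{-1}=w_2$ becomes $[y_2,x_1]=z_2$, which together with $[y_1,x_2]=z_2$ gives the relation $[y_1,x_2]=[y_2,x_1]=z_2$ of $\tilde{G}$; the relations $[x_2,x_1]=w_1$, $[y_2,x_2]=w_3$, $[y_2,y_1]=w_4$ collapse to the triviality of those three commutators, which is exactly what the convention on unstated commutators prescribes for $\tilde{G}$. Every remaining relation of $G^\star$, namely $[y_1,x_1]=z_1$, $[z_1,x_i]=v_i$, $[z_1,y_i]=u_i$, $[z_2,x_1]=[z_1,x_2]$, $[z_2,y_1]=[z_1,y_2]$ and the order relations on the generators, is literally a defining relation of $\tilde{G}$. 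Hence $\tilde{G}\cong G^\star/\langle w_1,w_2,w_3,w_4\rangle$, and since the relations $[w_j,x_i]=[w_j,y_i]=1$ of $G^\star$ become vacuous after killing the $w_j$, this quotient is exactly the group $H_5=F/\langle\gamma_4(F),F^p,w_1,w_2,w_3,w_4\rangle$.

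For the order I would invoke the chain of central extensions established in the proof of Theorem~\ref{representation-group}: with $K_4=G^\star$ of order $p^{14}$ one has $K_4/\langle w_4\rangle\cong K_3$, $K_3/\langle w_3\rangle\cong K_2$, $K_2/\langle w_1\rangle\cong K_1$ and $K_1/\langle w_2\rangle\cong H_5$, each step an extension by $\mathbb Z/p\mathbb Z$. This shows $\langle w_1,w_2,w_3,w_4\rangle$ is central elementary abelian of order $p^4$ in $G^\star$, so $|\tilde{G}|=|G^\star|/p^4=p^{14}/p^4=p^{10}$.

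The one point I would verify rather than assert, and the only place where anything could go wrong, is that the relations $z_i^p=1$ written explicitly in $\tilde{G}$ (and the implicit $v_i^p=u_i^p=1$) really do hold in $H_5$, so that no relation is secretly added. In $H_5$ this follows from killing $F^p$ in class $3$: from $y_1^p=1$ and the collection formula one gets $1=[y_1^p,x_1]\equiv z_1^p\,u_1^{\binom p2}\pmod{\gamma_4(F)}$, and since $p$ is odd $\binom p2\equiv 0\pmod p$, while $u_1^p=1$ because $u_1\in\gamma_3$ is central with $[z_1,y_1^p]=u_1^p$; together these force $z_1^p=1$, and the analogous computations handle the remaining generators. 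This confirms that $\tilde{G}$ and $H_5$ have the same presentation, completing the identification and hence the proof.
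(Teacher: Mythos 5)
Your proof is correct and takes essentially the same route as the paper: the paper's own proof of this corollary consists precisely of identifying $\tilde{G}$ with the group $H_5 = F/\langle \gamma_4(F), F^p, w_1,w_2,w_3,w_4\rangle$ from the proof of Theorem~\ref{representation-group} and citing the order $|H_5|=p^{10}$ computed there. Your two deviations are harmless but redundant: extracting the order as $|G^\star|/p^4 = p^{14}/p^4$ reuses a fact ($|G^\star|=p^{14}$) that the paper itself deduced from $|H_5|=p^{10}$, and the final collection-formula verification is unnecessary, since $F^p$ is the subgroup generated by \emph{all} $p$-th powers of $F$ (not just those of the generators $x_i,y_i$), so $z_i^p$, $u_i^p$, $v_i^p$ vanish in $H_5$ by definition.
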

\begin{proof}
	Let $H_5$ be the group that appeared in the proof of Theorem~\ref{representation-group}, where we also proved that $|H_5| = p^{10}$. The result follows because $\tilde{G} \cong H_5$.
\end{proof}

\subsection{$\mathbb{H}_{3}\Big(\frac{\mathbb{F}_p[t]}{(t^2-k)}\Big)$}
\label{sec:representation-group-field} In this section, we give a representation group of $\mathbb{H}_{3}\Big(\frac{\mathbb{F}_p[t]}{(t^2-k)}\Big)$.

\begin{thm}
A representation  group of $G = \mathbb{H}_{3}\Big(\frac{\mathbb{F}_p[t]}{(t^2-k)}\Big)$ is the following nilpotency class three group:
 \begin{eqnarray*}
 	G^\star&=&\langle  x_i, y_i, i=1,2  \mid [y_1,x_1]=z_1,[y_1,x_2]=z_2, [y_2,x_1]z_2^{-1}=w_2, [y_2,x_2]z_1^{-k}=w_3,  \\
 	&&[z_1, y_i]=u_i, [z_1, x_i]=v_i , [z_2,x_1]=[z_1, x_2],   [z_2,y_1]=[z_1, y_2],  [z_2,y_2]=[z_1,y_1]^k,    \\
 	&&[z_2,x_2]=[z_1,x_1]^k, [x_2,x_1]=w_1,  [y_2,y_1]=w_4,
 	x_i^p=y_i^p=w_i^p=1\rangle.
 \end{eqnarray*}
\end{thm}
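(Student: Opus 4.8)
The plan is to adapt, essentially verbatim, the argument used for Theorem~\ref{representation-group}, the only genuinely new feature being the appearance of the parameter $k$ in the third-level commutator relations. First I would set $Z = \langle u_1, u_2, v_1, v_2, w_1, w_2, w_3, w_4 \rangle$ and verify that $1 \to Z \to G^\star \to G \to 1$ is a stem extension: each generator of $Z$ lies in $(G^\star)'$ (being a commutator, or in the case of $w_3 = [y_2,x_2]z_1^{-k}$ the product of a commutator with a power of $z_1 = [y_1,x_1]$), and each is central because the defining relations place $u_i, v_i$ in $\gamma_3(G^\star)$ while the relations $[w_j, x_i] = [w_j, y_i] = 1$ together with $\gamma_4(G^\star) = 1$ (the group being of nilpotency class three) make the $w_j$ central. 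Since $\Ho^2(G, \mathbb{C}^\times) \cong (\mathbb{Z}/p\mathbb{Z})^8$ by Theorem~\ref{SchurM}, once $Z \cong (\mathbb{Z}/p\mathbb{Z})^8$ is checked it remains only to prove $|G^\star| = p^{14}$; this forces the extension to have the correct order and hence $G^\star$ to be a representation group.

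For the order computation I would realize $G^\star$ as an iterated central extension over $\hfpk \cong \mathbb{H}_3(\mathbb{F}_{p^2})$, exactly as the $H_i$ and $K_j$ were built in the proof of Theorem~\ref{representation-group}. Starting from the free group $F$ on $x_1, x_2, y_1, y_2$, the quotient $F/\langle \gamma_3(F), F^p, w_1, w_2, w_3, w_4\rangle$ is $\hfpk$ of order $p^6$; here the crucial point is that killing $w_3 = [y_2,x_2]z_1^{-k}$ imposes $[y_2,x_2] = z_1^k$, reproducing the defining relation of $\hfpk$. Working modulo $\langle \gamma_4(F), [w_j, x_i], [w_j, y_i]\rangle$, I would then use the Hall--Witt identity to compute the commutators of $z_1, z_2$ with the generators. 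Using $[y_2, x_2] = z_1^k$ (which was trivial in the $\fpp$ case) the Hall--Witt identity now yields the twisted relations
\[
[z_2, x_1] = [z_1, x_2], \quad [z_2, y_1] = [z_1, y_2], \quad [z_2, x_2] = [z_1, x_1]^{k}, \quad [z_2, y_2] = [z_1, y_1]^{k}.
\]
Consequently the third-level centre is generated by just $v_1 = [z_1,x_1]$, $v_2 = [z_1,x_2]$, $u_1 = [z_1,y_1]$, $u_2 = [z_1,y_2]$, since every $[z_2, -]$ is expressible through these.

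With the relations in place I would adjoin $v_1, v_2, u_1, u_2$ one at a time, mimicking the chain $H_2, H_3, H_4, H_5$: each adjunction multiplies the order by $p$ because the quotient by the newly added central generator recovers the previous group, yielding an analogue of $H_5$ of order $p^{10}$. I would then adjoin $w_1, w_2, w_3, w_4$ one at a time, mimicking $K_1, K_2, K_3, K_4$, each contributing a further factor $p$, to reach $|G^\star| = p^{14}$, as required.

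The step I expect to be the main obstacle is the Hall--Witt bookkeeping that produces the $k$-twisted relations $[z_2, x_2] = [z_1, x_1]^{k}$ and $[z_2, y_2] = [z_1, y_1]^{k}$: one must track with care the contribution of the term $[y_2, x_2] = z_1^k$, which enters through factors such as $[y_2, x_2^{-1}, x_1] = [z_1, x_1]^{-k}$, and confirm that it appears with exactly the stated exponent (the sign being sensitive to the commutator and Hall--Witt conventions). One must also verify that these relations cause no unexpected collapse, i.e.\ that $u_1, u_2, v_1, v_2$ (and subsequently the $w_j$) remain independent elements of order $p$, so that the order is exactly $p^{14}$ and not smaller; this is precisely what the step-by-step quotient construction guarantees, the verification being routine and parallel to the $\fpp$ case.
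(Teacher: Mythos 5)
Your proposal is correct and follows essentially the same route as the paper: the paper's own proof consists of exhibiting the order-$p^{10}$ subpresentation (your analogue of $H_5$, with the $k$-twisted relations $[z_2,x_2]=[z_1,x_1]^k$, $[z_2,y_2]=[z_1,y_1]^k$) and then invoking, without further detail, the method of Theorem~\ref{representation-group} to conclude $|G^\star|=p^{14}$ and hence that $G^\star$ is a representation group. Your write-up simply makes explicit the steps the paper leaves implicit --- the stem-extension check against $\Ho^2(G,\mathbb{C}^\times)\cong(\mathbb{Z}/p\mathbb{Z})^8$, the Hall--Witt derivation of the $k$-twisted relations from $[y_2,x_2]=z_1^k$, and the chain of central extensions adjoining $v_i,u_i$ and then the $w_j$ one at a time.
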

\begin{proof}  Consider the group
	\begin{eqnarray*}
		H &=&\langle   x_i, y_i, i=1,2  \mid [y_1,x_1]=z_1, [y_2,x_2]=z_1^k, [y_1,x_2]=[y_2,x_1]=z_2,  [z_1, y_i]=u_i\\
		&&[z_1, x_i]=v_i,  [z_2,x_1]=[z_1, x_2],  [z_2,y_1]=[z_1, y_2],   [z_2,y_2]=[z_1,y_1]^k,   \\
		&&  [z_2,x_2]=[z_1,x_1]^k, x_i^p=y_i^p=w_i^p=1 \rangle.
	\end{eqnarray*}
By using the method similar to the proof of Theorem \ref{representation-group}, we obtain that $H$ is of order $p^{10}$ and $G^\star$ is of order $p^{14}$. Hence $G^\star$ is a representation group of $\mathbb{H}_{3}\Big(\frac{\mathbb{F}_p[t]}{(t^2-k)}\Big)$.
\end{proof}

\subsection{$\mathbb{H}_{3}(\mathbb{Z}/p\mathbb{Z}\times \mathbb{Z}/p\mathbb{Z})$}
\begin{thm} A representation  group of $G = \mathbb{H}_{3}(\mathbb{Z}/p\mathbb{Z}\times \mathbb{Z}/p\mathbb{Z})$ is the following nilpotency class three group:
\begin{eqnarray*}
	G^\star &=&\langle x_1,x_2,y_1,y_2 \mid [x_1,x_2]=z_1, [y_1,y_2]=z_2, [z_1, x_1]=t_1, [z_1, x_2]=t_2, [z_2, y_1]=t_3,\\
	&& [z_2, y_2]=t_4, [x_1,y_1]=t_5, [x_1,y_2]=t_6,  [x_2,y_1]=t_7,  [x_2,y_2]=t_8, \\
	&&\hspace{8cm} x_i^p=y_i^p=z_1^p=z_2^p=1, i=1,2 \rangle.
\end{eqnarray*}

\end{thm}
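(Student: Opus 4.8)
The plan is to realize $G^\star$ as a stem cover of $G = \mathbb{H}_{3}(\mathbb Z/p\mathbb Z \times \mathbb Z/p\mathbb Z)$ whose central kernel is the full Schur multiplier. Put $A = \langle t_1, t_2, \dots, t_8\rangle$. Each $t_i$ is a commutator, so $A \subseteq (G^\star)'$; and since $G^\star$ has nilpotency class $3$, the weight-three elements $t_1,\dots,t_4$ lie in $\gamma_3(G^\star) \subseteq Z(G^\star)$, while the weight-two elements $t_5,\dots,t_8$ are central because their commutators with the generators are unlisted weight-three commutators, hence trivial. Thus $A \subseteq Z(G^\star) \cap (G^\star)'$. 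In $G^\star/A$ the cross commutators $[x_i,y_j]$ vanish and the relations $[z_1,x_i]=[z_2,y_i]=1$ make $z_1,z_2$ central, so $\langle x_1,x_2\rangle$ and $\langle y_1,y_2\rangle$ commute and $G^\star/A \cong \mathbb{H}_{3}(\mathbb Z/p\mathbb Z)\times \mathbb{H}_{3}(\mathbb Z/p\mathbb Z)\cong G$, a group of order $p^6$. Hence $1 \to A \to G^\star \to G \to 1$ is a stem extension. Applying \eqref{exact-sequence} to it, the transgression $\Hom(A,\bC^\times)\to\Ho^2(G,\bC^\times)$ is injective (no linear character of $G^\star$ restricts nontrivially to $A\subseteq(G^\star)'$), and since $\Ho^2(G,\bC^\times)\cong(\bZ/p\bZ)^8$ by Theorem~\ref{SchurM}, this gives $|A|\le p^8$. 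Thus $G^\star$ is a representation group of $G$ precisely when $|A|=p^8$, i.e.\ when $|G^\star|=p^{14}$, and this order statement is what remains.

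As in the proof of Theorem~\ref{representation-group}, I would compute $|G^\star|$ along the lower central series, viewing $G^\star$ as the quotient of the free group $F$ on $x_1,x_2,y_1,y_2$ by $\gamma_4(F)$, $F^p$ and the listed relations. The layer $\gamma_1/\gamma_2$ is elementary abelian of rank $4$. The layer $\gamma_2/\gamma_3$ is spanned by the six basic commutators $z_1,z_2,t_5,t_6,t_7,t_8$, every one of which is retained; as the remaining relations are $p$-th powers and weight-three commutator relations, whose normal closures are central and so do not meet $\gamma_2$ modulo $\gamma_3$, this layer has rank exactly $6$. Finally, every weight-three basic commutator reduces, via the listed values and the triviality of the unlisted ones, into $\langle t_1,t_2,t_3,t_4\rangle$, so $\gamma_3$ has rank at most $4$. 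This yields the upper bound $|G^\star|\le p^{4+6+4}=p^{14}$.

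For the matching lower bound I must rule out any collapse among the weight-three generators $t_1,\dots,t_4$, and here the direct-product shape of $G$ is the lever. Modulo the central subgroup $\langle t_5,t_6,t_7,t_8\rangle$ the relations $[x_i,y_j]=1$ hold, so $G^\star/\langle t_5,\dots,t_8\rangle$ decomposes as $G_1^\star\times G_2^\star$, where $G_1^\star=\langle x_1,x_2\mid [x_1,x_2]=z_1,\ [z_1,x_1]=t_1,\ [z_1,x_2]=t_2,\ x_i^p=z_1^p=1\rangle$ is the free class-three exponent-$p$ group on two generators, of order $p^5$, a representation group of $\mathbb{H}_{3}(\mathbb Z/p\mathbb Z)$, and $G_2^\star$ is its analogue on $y_1,y_2$. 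In $G_1^\star\times G_2^\star$ the pairs $t_1,t_2$ and $t_3,t_4$ are independent of order $p$, so $\gamma_3(G^\star)$ has rank at least $4$, hence exactly $4$. The rank-$6$ computation of $\gamma_2/\gamma_3$ already makes $t_5,\dots,t_8$ independent, so $\langle t_5,\dots,t_8\rangle$ has order $p^4$; conceptually these four cross commutators realize the summand $\Hom(G_1/G_1'\otimes G_2/G_2',\bC^\times)\cong(\bZ/p\bZ)^4$ of $\Ho^2(G,\bC^\times)$ supplied by Theorem~\ref{directproduct}. Altogether $|G^\star|=p^{4+6+4}=p^{14}$ and $|A|=p^8$.

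The step I expect to be the main obstacle is this non-collapse of the central generators, in particular checking that the weight-three relations do not entangle $t_1,\dots,t_4$ with the cross commutators. I would settle it with the Hall–Witt identity on the mixed triples such as $(x_1,x_2,y_1)$ and $(x_1,y_1,y_2)$: since $z_1=[x_1,x_2]$ and $z_2=[y_1,y_2]$ involve disjoint generators and the $t_j$ are central, each identity collapses to $1=1$ and imposes nothing new. This disjointness is exactly what makes the present case more transparent than that of $\hfpp$, where the identifications $[y_1,x_2]=[y_2,x_1]=z_2$ genuinely couple the two Heisenberg blocks. Once confirmed, $A\cong(\bZ/p\bZ)^8\cong\Ho^2(G,\bC^\times)$ and $G^\star$ is a representation group of $G$.
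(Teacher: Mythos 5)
Your overall framework is sound and matches the criterion the paper itself uses: exhibit a stem extension $1\to A\to G^\star\to G\to 1$ with $A=\langle t_1,\dots,t_8\rangle$, note $|A|\le p^8=|\Ho^2(G,\bC^\times)|$ by injectivity of the transgression, and reduce everything to the order computation $|G^\star|=p^{14}$. Your route to that order (lower central series layers plus quotients) differs from the paper's, which instead realizes $G^\star$ explicitly as $\big(H'\times(\bZ/p\bZ)^4\big)\rtimes K'$ with $H'\cong K'$ of order $p^5$. But there is a genuine gap, and it sits exactly at the step you yourself flag as the main obstacle: the independence of $t_5,\dots,t_8$, i.e.\ the claim that $\gamma_2(G^\star)/\gamma_3(G^\star)$ has rank exactly $6$ (equivalently $|\langle t_5,\dots,t_8\rangle|=p^4$). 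Your justification --- that the remaining relations have normal closures which are central and ``do not meet $\gamma_2$ modulo $\gamma_3$'' --- is false as stated and circular where it matters: working in the free class-three group, $z_1^p$ itself lies in $\gamma_2$, the normal closure of $x_1^p$ contains $[x_1^p,y_1]\equiv t_5^{\,p}\pmod{\gamma_3}$, and the normal closure of $z_1^p$ contains $t_1^p$; so these closures do meet $\gamma_2$ and $\gamma_3$, and ruling out any collapse \emph{beyond} these $p$-th powers is precisely the lower bound to be proved. The Hall--Witt check only confirms that the listed relations are mutually consistent; it cannot exclude hidden consequences of the relations (for $\hfpp$ the paper needs the entire tower $H_1,\dots,H_5,K_1,\dots,K_4$ in Theorem~\ref{representation-group} to establish exactly this kind of non-collapse). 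Consequently your later sentence ``the rank-$6$ computation of $\gamma_2/\gamma_3$ already makes $t_5,\dots,t_8$ independent'' rests on an unproved assertion, and with it the conclusion $|A|=p^8$.

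The gap is fixable inside your framework by one more explicit quotient. Let $\cH$ be the class-two group of order $p^8$ whose elements are triples $(a,b,c)$ with $a,b\in\mathbb{F}_p^2$, $c\in\mathbb{F}_p^2\otimes\mathbb{F}_p^2$, and product $(a,b,c)(a',b',c')=(a+a',\,b+b',\,c+c'+a\otimes b')$. Sending $x_i\mapsto(e_i,0,0)$ and $y_j\mapsto(0,e_j,0)$ satisfies every defining relation of $G^\star$: one gets $[x_1,x_2]=[y_1,y_2]=1$ (so $z_1,z_2,t_1,\dots,t_4$ die), while $[x_i,y_j]\mapsto(0,0,e_i\otimes e_j)$, so $t_5,\dots,t_8$ map to a basis of the centre. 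Hence $|\langle t_5,\dots,t_8\rangle|\ge p^4$, and combined with your (correct) identification $G^\star/\langle t_5,\dots,t_8\rangle\cong G_1^\star\times G_2^\star$ of order $p^{10}$ --- granting the classical fact $|G_i^\star|=p^5$, which the paper also assumes; note only that calling $G_1^\star$ the free class-three \emph{exponent}-$p$ group is inaccurate at $p=3$, where it has exponent $9$ --- this yields $|G^\star|\ge p^{14}$, matching your upper bound. This is also the moral of the paper's own proof: its semidirect-product model $\big(H'\times(\bZ/p\bZ)^4\big)\rtimes K'$ is precisely a concrete group of order $p^{14}$ satisfying the presentation, certifying the independence of all eight central generators in one stroke --- the step your argument asserts rather than proves.
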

\begin{proof}
We have $$
\Ho^2(G, \mathbb C^\times) \cong  (\mathbb{Z}/p\mathbb{Z})^8$$
and
\begin{eqnarray*}
G^\star & \cong&  \Big(\langle x_1,x_2\mid [x_1,x_2]=z_1, [z_1, x_1]=t_1, [z_1, x_2]=t_2 \rangle \times \langle  t_5,t_6,t_7,t_8\rangle\Big) \\
&&\rtimes \langle y_1,y_2 \mid   [y_1,y_2]=z_2, [z_2, y_1]=t_3,  [z_2, y_2]=t_4\rangle\\
&& \cong \big(H' \times ( \mathbb Z/p\mathbb Z)^4\big)  \rtimes K',
\end{eqnarray*}
where $H'\cong K'$ is of order $p^5$.
 Thus $G^\star$ is  a semidirect product of a normal subgroup of order $p^9$ and a subgroup of order $p^5$.
 So $G^\star$ of order $p^{14}$ such that the  following sequence
$$
1 \to \langle t_i\rangle_{i=1}^{8}  \cong \Ho^2(G, \mathbb C^\times) \to G^\star \to G \to 1
$$
 is exact.
Hence the result follows.
\end{proof}



\section{Description of $2$-cocycles of $\mathbb{H}_{3}(R)$}\label{section-cocycle-description}

In this section we describe the $2$-cocycles of $\mathbb{H}_3(R).$
\subsection{ $\hfpp$}

We start with a proof of Theorem \ref{2cocycle}.
\begin{proof}[Proof of Theorem \ref{2cocycle}]
	Let $G=\hfpp$. We show that $\Ho^2(G, \bC^\times)$ is the direct product of two groups which we compute explicitly. In order to do this, consider the central subgroup $Z=\langle z_1 \rangle$ of $G$.
	Then
	$$H=G/Z \cong \langle  y_1, y_2,x_1, x_2\mid   [y_1,x_2]=[y_2,x_1]=z_2, x_i^p=y_i^p=1   \rangle$$
	 is an extra-special $p$-group of order $p^5$.  From \eqref{exact-sequence}, we obtain the exact sequence
	\begin{eqnarray*}
		&&  1 \to \Hom(Z, \bC^\times)   \xrightarrow{\tra} \Ho^2(H, \bC^\times)   \xrightarrow{\inf}  \Ho^2(G, \bC^\times)  \xrightarrow{\chi}  \Hom(G/G' \otimes Z,  \bC^\times).
	\end{eqnarray*}
	Since
	$\Ho^2(G,  \bC^\times)\cong (\bZ/p\bZ)^8, \Ho^2(H,  \bC^\times)\cong (\bZ/p\bZ)^5,$
	$\mathrm{Im}(\inf)\cong \frac{\Ho^2(H, \bC^\times)}{\mathrm{Im}(\tra)}$
	and $\Hom(G/G' \otimes Z,  \bC^\times) \cong (\bZ/p\bZ)^4$
	we have, by Proposition \ref{exponent-p},
		\begin{eqnarray}\label{split}
	\Ho^2(G,  \bC^\times)\cong \mathrm{Im}\big( \inf) \times \mu\big(\Hom(G/G' \otimes Z,  \bC^\times)\big),
		\end{eqnarray}
for a section $\mu:\Hom(G/G' \otimes Z,  \bC^\times)  \to  \Ho^2(G, \bC^\times)$  which is an injective homomorphism($\mu$ will be defined in Step 2 below).
	 We describe  $\mathrm{Im}(\inf)$ and $ \mu\big(\Hom(G/G' \otimes Z,  \bC^\times)\big)$ in the following two steps. For simplification of notations, let $g = x_1^{a_1}x_2^{a_2}y_1^{b_1}y_2^{b_2}z_1^{c_1}z_2^{c_2}$, $g' =  x_1^{a_1'}x_2^{a_2'}y_1^{b_1'}y_2^{b_2'}z_1^{c_1'}z_2^{c_2'}$ such that $g$ and $g'$ are arbitrary elements of $G$ and let $N = \langle Z, z_2 \rangle$
	 be the subgroup generated by $Z$ and $z_2$. \\

\noindent \textbf{Step 1:} To describe $\mathrm{Im}(\inf)$ we proceed as follows.
Consider the exact sequence
\begin{eqnarray*}
		&&  1 \to \Hom(\langle z_2 \rangle, \bC^\times)   \xrightarrow{\tra} \Ho^2(H/\langle z_2 \rangle, \bC^\times)   \xrightarrow{\inf}  \Ho^2(H, \bC^\times).
	\end{eqnarray*}
	 Since $\Ho^2(H, \bC^\times)\cong (\bZ/p\bZ)^5$ and $H/\langle z_2 \rangle \cong (\bZ/p\bZ)^4,$ the map
$$\inf:  \Ho^2(H/\langle z_2 \rangle,\bC^\times)\to \Ho^2(H,\bC^\times)$$ is surjective. Hence every  $2$-cocycle $\beta$ of $H$ is of the form $[\beta]=\inf([\delta]), \delta \in \Ho^2(H/\langle z_2 \rangle, \bC^\times)   $. The $2$-cocycles of $H/\langle z_2 \rangle$, being an elementary abelian group, are well known. Therefore
	\begin{eqnarray*}
		\beta\big(gZ, g'Z\big)=\delta\big(gN, g'N\big)
		=\mu_1^{a_2a_1'} \lambda_1^{b_1a_1'}\lambda_2^{b_2a_1'}\lambda_3^{b_1a_2'}\mu_3^{b_2a_2'} \mu_4^{b_2 b_1'},
	\end{eqnarray*}
where $\mu_j, ~j \in \{ 1, 3, 4\}$ and $\lambda_\ell, ~\ell \in \{1, 2, 3\}$ are scalars whose $p$-th power is one.

	Now recall (\ref{split}) and define $[\alpha_1] =\inf([\beta]) \in  \Ho^2(G, \bC^\times)$ for $\beta \in \Ho^2(H, \bC^\times)$.  Then
	\begin{eqnarray*}
		 \alpha_1\big(g, g' \big)=\beta\big(gZ, g'Z\big)
		=\mu_1^{a_2a_1'} \lambda_1^{b_1a_1'}\lambda_2^{b_2a_1'}\lambda_3^{b_1a_2'}\mu_3^{b_2a_2'} \mu_4^{b_2 b_1'}.
	\end{eqnarray*}
	For $i\in \{ 1,2 \}$, define maps $f_i:G \to \bC^\times$ by $$f_1(g)=\lambda_1^{c_1},~~f_2(g)=\lambda_3^{c_2},
	 \text{ for } g=x_1^{a_1}x_2^{a_2}y_1^{b_1}y_2^{b_2}z_1^{c_1}z_2^{c_2} \in G.$$
This shows that
	$\lambda_1^{b_1a_1'}, \lambda_3^{b_2a_1'+b_1a_2'}$ are coboundaries in $G$. Hence up to cohomologous,
	\begin{eqnarray}\label{cocycle1}
	 \alpha_1\big(g, g'\big)=\mu_1^{a_2a_1'}({\lambda_2\lambda_3^{-1}})^{b_2a_1'}\mu_3^{b_2a_2'} \mu_4^{b_2 b_1'}
	=\mu_1^{a_2a_1'} \mu_2^{b_2a_1'}\mu_3^{b_2a_2'} \mu_4^{b_2 b_1'}.
	\end{eqnarray}
	Therefore, in (\ref{split}), $\mathrm{Im}(\inf)$  consists of $[\alpha_1]$ such that $\alpha_1$ is a $2$-cocycle of the above form. \\

\noindent	\textbf{Step 2:} Consider the group $\tilde{G}$ from (\ref{tildeG}).
	We have the central exact sequence
	$$1 \to \langle u_1,u_2,v_1,v_2 \rangle \xrightarrow{i} \tilde{G} \xrightarrow{\pi} G \to 1.$$
	 Observe that in $\tilde{G}$, we have
	\begin{eqnarray*}
		&&[y_1^i, x_1^j]=z_1^{ij}v_1^{i{j \choose 2}}u_1^{j {i \choose 2}}, ~ [y_1^i, x_2^j]=z_2^{ij}u_2^{j {i \choose 2}},~[y_2^i, x_1^j]=z_2^{ij}v_2^{i{j \choose 2}}.
	\end{eqnarray*}
For $g=x_1^{a_1}x_2^{a_2}y_1^{b_1}y_2^{b_2}z_1^{c_1}z_2^{c_2}, g'=x_1^{a_1'}x_2^{a_2'}y_1^{b_1'}y_2^{b_2'}z_1^{c_1'}z_2^{c_2'}$ in $G$,
 $$gg'=x_1^{a_1+a_1'}x_2^{a_2+a_2'}y_1^{b_1+b_1'}y_2^{b_2+b_2'}z_1^{c_1+c_1'+b_1a_1'}z_2^{c_2+c_2'+b_1a_2'+b_2a_1'}.$$

	We denote the numbers $b_1{a_1' \choose 2}+a_1'c_1,$  $b_2{a_1' \choose 2}+a_1'c_2+a_2'c_1,$ $a_1' {b_1\choose 2}-b_1c_1',$ $a_2'{b_1\choose 2}-b_2c_1'-b_1c_2'$ by $k_5, k_6, k_7, k_8$ respectively.
Define a section $s:G \to \tilde{G}$ by $$s(x_1^{a_1}x_2^{a_2}y_1^{b_1}y_2^{b_2}z_1^{c_1}z_2^{c_2})=
	x_1^{a_1}x_2^{a_2}z_1^{c_1}z_2^{c_2}y_1^{b_1}y_2^{b_2}
	 \in \tilde{G}.$$ Then,
\begin{eqnarray*}
		s(g)s(g')&=& (x_1^{a_1}x_2^{a_2}z_1^{c_1}z_2^{c_2}y_1^{b_1}y_2^{b_2})(x_1^{a_1'}x_2^{a_2'}z_1^{c_1'}z_2^{c_2'}y_1^{b_1'}y_2^{b_2'})\\
		&=&x_1^{a_1+a_1'}x_2^{a_2+a_2'}z_1^{c_1+c_1'+b_1a_1'}z_2^{c_2+c_2'+b_1a_2'+b_2a_1'}y_1^{b_1+b_1'}y_2^{b_2+b_2'}v_1^{k_5}v_2^{k_6+b_1a_1'a_2'}\\
		&&  u_1^{k_7-b_1^2a_1'}  u_2^{k_8-b_1(b_1a_2'+b_2a_1')},\\
		s(g)s(g')s(gg')^{-1}&=&v_1^{k_5}v_2^{k_6+b_1a_1'a_2'}  u_1^{k_7-b_1^2a_1'}  u_2^{k_8-b_1(b_1a_2'+b_2a_1')}
	\end{eqnarray*}

	Consider the exact sequence
	\begin{eqnarray*}
		&&  1 \to \Hom(\langle u_1,u_2,v_1,v_2 \rangle, \bC^\times)   \xrightarrow{\tra} \Ho^2(G, \bC^\times)   \xrightarrow{\inf}  \Ho^2(\tilde{G}, \bC^\times).
	\end{eqnarray*}
	Let $f \in \Hom(\langle u_1,u_2,v_1,v_2\rangle, \bC^\times)$  and  $f(v_1), f(v_2), f(u_1),f(u_2)$ be denoted by  $\mu_{5}, \mu_{6}, \mu_{7},\mu_{8} \in \bC^\times$ respectively. Then
	$[\alpha_2]=\tra(f) \in \Ho^2(G,\bC^\times)$ is defined by
	\begin{eqnarray*}
	\hspace{.2cm} \alpha_2\big(g, g' \big) &&= f(s(g)s(g')s(gg')^{-1})\nonumber \\
	&&=  f\big(v_1^{k_5}v_2^{k_6+b_1a_1'a_2'}  u_1^{k_7-b_1^2a_1'}  u_2^{k_8-b_1(b_1a_2'+b_2a_1')}\big)\nonumber \\
	&&=\mu_5^{k_5}\mu_6^{k_6+b_1a_1'a_2'}\mu_7^{k_7-b_1^2a_1'}  \mu_8^{k_8-b_1(b_1a_2'+b_2a_1')}
	\end{eqnarray*}	
	For $i\in \{1,2,3\}$, define the maps
	$f_i:G \to \bC^\times$ by $$f_1(g)=\mu_6^{c_1},~~f_2(g)=\mu_7^{c_1},~~ f_3(g)=\mu_8^{c_2} \text{ for } g=x_1^{a_1}x_2^{a_2}y_1^{b_1}y_2^{b_2}z_1^{c_1}z_2^{c_2} \in G.$$ This gives that $\mu_6^{b_1a_1'a_2'}, \mu_7^{-b_1^2a_1'},\mu_8^{(b_1a_2'+b_2a_1')}$ are coboundaries in $G$.
	Therefore up to cohomologous we have
	\begin{eqnarray}\label{cocycle2}
		\alpha_2\big(g, g'\big) =\mu_5^{k_5}\mu_6^{k_6}\mu_7^{k_7}  \mu_8^{k_8}.
		\end{eqnarray}
	Consider the subgroup $S$ of $\Ho^2(G,\mathbb C^\times)$ consisting of $[\alpha_2]$ such that $\alpha_2$ of the  form (\ref{cocycle2}). Now
	\begin{eqnarray*}
\chi([\alpha_2])\big(x_1^{a_1}x_2^{a_2}y_1^{b_1}y_2^{b_2}G' \otimes z_1^{c_1}\big)&=&\alpha_2\big(x_1^{a_1}x_2^{a_2}y_1^{b_1}y_2^{b_2}, z_1^{c_1}\big)\alpha_2\big(z_1^{c_1}, x_1^{a_1}x_2^{a_2}y_1^{b_1}y_2^{b_2}\big)^{-1}\\
		&=&  \mu_5^{-a_1c_1}\mu_6^{-a_2c_1} \mu_7^{-b_1c_1}  \mu_8^{-b_2c_1}.
	\end{eqnarray*}
	Define a section $\mu: \Hom(G/G' \otimes Z, \mathbb C^\times)\to S$ by
		\begin{eqnarray*}
\mu(f)\big(g, g'\big)=f(x_1\otimes z_1)^{-k_5}f(x_2\otimes z_1)^{-k_6}f(y_1\otimes z_1)^{-k_7} f(y_2\otimes z_1)^{-k_8}.
	\end{eqnarray*}
It is easy to check that $\mu$ is a homomorphism and  $\chi |_{S}\circ \mu=\mathrm{id}|_{\Hom(G/G' \otimes Z, \mathbb C^\times)}$ and $\mu \circ \chi |_{S}=\mathrm{id}|_{S}$.
In particular,
\[
\mu\big(\Hom(G/G' \otimes Z, \mathbb C^\times)\big)\cong S.
\]

We are now in a position to complete the proof.
	Using (\ref{cocycle1}) and  (\ref{cocycle2}) we define
	$\alpha=\alpha_1\alpha_2$ and so we have
	\begin{eqnarray}\label{cocycle7}
	&& \hspace{.2cm}   \alpha\big(x_1^{a_1}x_2^{a_2}y_1^{b_1}y_2^{b_2}z_1^{c_1}z_2^{c_2}, x_1^{a_1'}x_2^{a_2'}y_1^{b_1'}y_2^{b_2'}z_1^{c_1'}z_2^{c_2'}\big)=\mu_1^{a_2a_1'} \mu_2^{b_2a_1'}\mu_3^{b_2a_2'} \mu_4^{b_2 b_1'}  \mu_5^{k_5}\mu_6^{k_6}\mu_7^{k_7}  \mu_8^{k_8}.
	\end{eqnarray}
Therefore the result follows.
\end{proof}

\subsection{ $\mathbb{H}_3\Big(\frac{\mathbb{F}_p[t]}{(t^2-k)}\Big)$}

We denote the numbers  $b_1{a_1' \choose 2}+kb_1{a_2' \choose 2}+ kb_2a_1'a_2'+a_1'c_1+ka_2'c_2, ~b_2{a_1' \choose 2}+kb_2{a_2'\choose 2}+b_1a_1'a_2'+a_1'c_2+a_2'c_1, ~a_1' {b_1\choose 2}+ka_1'{b_2 \choose 2}-b_1^2a_1'-b_1c_1'-kb_2c_2'$ and $a_2'{b_1\choose 2}+ka_2'{b_2\choose 2}-kb_2^2a_2'-b_2c_1'-b_1c_2'$ by $q_5, q_6, q_7$ and $q_8$ respectively.
\begin{thm}
Upto cohomologous, every $2$-cocycle $\alpha$ of $\mathbb{H}_{3}\Big(\frac{\mathbb{F}_p[t]}{(t^2-k)}\Big)$ is of the following form:
\begin{eqnarray*}
&&\alpha\big(x_1^{a_1}x_2^{a_2}y_1^{b_1}y_2^{b_2}z_1^{c_1}z_2^{c_2}, x_1^{a_1'}x_2^{a_2'}y_1^{b_1'}y_2^{b_2'}z_1^{c_1'}z_2^{c_2'}\big)=\mu_1^{a_2a_1'} \mu_2^{b_2a_1'}\mu_3^{b_2a_2'} \mu_4^{b_2 b_1'}  \mu_5^{q_5} \mu_6^{q_6} \mu_7^{q_7} \mu_8^{q_8},
\end{eqnarray*}
 where  $\mu_i \in \bC^\times$ such that $\mu_i^p=1$.	
\end{thm}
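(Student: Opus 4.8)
The plan is to follow the strategy of the proof of Theorem~\ref{2cocycle}, adapting each step to the $k$-twisted relations $[y_2,x_2]=z_1^k$, $[z_2,y_2]=[z_1,y_1]^k$, and $[z_2,x_2]=[z_1,x_1]^k$ that distinguish $G=\hfpk$ from $\hfpp$. I would take the central subgroup $Z=\langle z_1\rangle$. Passing to the quotient kills both $[y_1,x_1]$ and $[y_2,x_2]=z_1^k$, so $H=G/Z$ is again the extra-special $p$-group of order $p^5$ with $[y_1,x_2]=[y_2,x_1]=z_2$. By Theorem~\ref{SchurM} and Lemma~\ref{lemma: elementary-abelian-shur-multiplier}, $\Ho^2(G,\bC^\times)\cong(\bZ/p\bZ)^8$ is elementary abelian; since $|\Ho^2(H,\bC^\times)|=p^5$, $|\Hom(Z,\bC^\times)|=p$, and $|\Hom(G/G'\otimes Z,\bC^\times)|=p^4$, the hypothesis of Proposition~\ref{exponent-p} holds and we obtain the splitting
\[
\Ho^2(G,\bC^\times)\cong\mathrm{Im}(\inf)\times\mu\big(\Hom(G/G'\otimes Z,\bC^\times)\big).
\]

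For the first factor I would run Step~1 of Theorem~\ref{2cocycle} verbatim. Because $H$ is literally the same extra-special group of order $p^5$, its $2$-cocycles inflate from the elementary abelian quotient $H/\langle z_2\rangle$, and after clearing the resulting coboundaries one obtains the contribution $\mu_1^{a_2a_1'}\mu_2^{b_2a_1'}\mu_3^{b_2a_2'}\mu_4^{b_2b_1'}$. No $k$ enters here, which is exactly why the first four factors of the asserted formula agree with those of Theorem~\ref{2cocycle}.

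For the transgression factor (Step~2), the relevant object is the nilpotency-class-three group of order $p^{10}$ constructed in Section~\ref{sec:representation-group-field} for the representation group of $G$; call it $\tilde G$, with central extension $1\to\langle u_1,u_2,v_1,v_2\rangle\to\tilde G\xrightarrow{\pi}G\to1$. I would first record the power-commutator identities in $\tilde G$ --- such as $[y_1^i,x_1^j]=z_1^{ij}v_1^{i{j\choose2}}u_1^{j{i\choose2}}$ and their $k$-twisted analogues for $[y_2^i,x_2^j]$ induced by $[y_2,x_2]=z_1^k$, $[z_2,x_2]=[z_1,x_1]^k$, and $[z_2,y_2]=[z_1,y_1]^k$. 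Using the ordered section $s(g)=x_1^{a_1}x_2^{a_2}z_1^{c_1}z_2^{c_2}y_1^{b_1}y_2^{b_2}\in\tilde G$, the transgression cocycle is $\alpha_2(g,g')=f\big(s(g)s(g')s(gg')^{-1}\big)$ for $f\in\Hom(\langle u_1,u_2,v_1,v_2\rangle,\bC^\times)$, with $\mu_5,\mu_6,\mu_7,\mu_8$ the values of $f$ on $v_1,v_2,u_1,u_2$. Collecting the $y_1,y_2$ powers past the central letters and reading off the exponents of $v_1,v_2,u_1,u_2$ gives $q_5,q_6,q_7,q_8$ up to terms that are coboundaries (removed exactly as in Step~2 of Theorem~\ref{2cocycle}); this yields the factor $\mu_5^{q_5}\mu_6^{q_6}\mu_7^{q_7}\mu_8^{q_8}$. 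Setting $\alpha=\alpha_1\alpha_2$ then gives the stated form.

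The main obstacle will be the explicit evaluation of $s(g)s(g')s(gg')^{-1}$ in $\tilde G$. The $k$-twisted commutation rules mean that moving the $x_2,y_2$ powers past $z_1,z_2$ produces additional central contributions weighted by $k$ (for instance $z_1^{k(\cdots)}$ feeding into the $v_i,u_i$ exponents), so the bookkeeping is markedly heavier than in the split case $\hfpp$. Verifying that these contributions collect precisely into $q_5,\dots,q_8$, and that the remaining spurious terms are genuine coboundaries, is the delicate part of the argument.
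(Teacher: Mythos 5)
Your proposal is correct and takes essentially the same approach as the paper: the paper's entire proof of this theorem is the one-line remark that it "goes on the same lines as the proof of Theorem \ref{2cocycle}, using the representation group of Section \ref{sec:representation-group-field}," which is precisely the plan you spell out (splitting via Proposition \ref{exponent-p} with $Z=\langle z_1\rangle$, Step 1 through the extra-special quotient $H=G/Z$, Step 2 via transgression from the class-three group of order $p^{10}$). The only minor imprecision is your claim that "no $k$ enters" in Step 1: since the product in $G=\hfpk$ carries a $z_1^{kb_2a_2'}$ term coming from $[y_2,x_2]=z_1^k$, the coboundaries used to clear terms there do involve $k$, but their effect is absorbed into redefining $\mu_2,\mu_3$, so the stated form of the first four factors is unaffected.
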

\begin{proof}
The proof of this result goes on the same lines as the proof of Theorem \ref{2cocycle}, using the representation group of $\mathbb{H}_3\Big(\frac{\mathbb{F}_p[t]}{(t^2-k)}\Big)$ that appeared in Section~\ref{sec:representation-group-field}.
\end{proof}

\subsection{$\mathbb{H}_3(\mathbb Z/p\mathbb Z \times \mathbb Z/p\mathbb Z)$}

In the following result, the element $x_1^{a_1}x_2^{a_2}y_1^{b_1}y_2^{b_2}z_1^{c_1}z_2^{c_2}$ denotes the element
\[
\begin{pmatrix}
1 & (b_1,b_2) & (c_1,c_2)\\
0 & 1& (a_1,a_2)\\
0 & 0& 1
 \end{pmatrix}
 \in \mathbb{H}_3(\bZ/p\bZ \times \bZ/p\bZ).
\]
\begin{thm}
Up to cohomologous, every $2$-cocycle $\alpha$ of $\mathbb{H}_{3}\big(\mathbb Z/p\mathbb Z \times \mathbb Z/p\mathbb Z \big)$ is of the following form:
\begin{eqnarray*}
&&\alpha \big(x_1^{a_1}x_2^{a_2}y_1^{b_1}y_2^{b_2}z_1^{c_1}z_2^{c_2}, x_1^{a_1'}x_2^{a_2'}y_1^{b_1'}y_2^{b_2'}z_1^{c_1'}z_2^{c_2'}\big)
=\lambda_1^{c_1'a_1+b_1{a_1'\choose 2}+a_1b_1a_1'}\\
&&\hspace{2.6 cm} \lambda_2^{c_1'b_1+a_1'{b_1\choose 2}} \lambda_3^{c_{2}'a_{2}+b_{2}{a_{2}'\choose 2}+a_{2}b_{2}a_{2}'}\lambda_4^{c_{2}'b_{2}+a_{2}'{b_{2}\choose 2}}
\lambda_5^{a_1'a_{2}}\lambda_6^{a_1'b_{2}}\lambda_7^{b_1'a_2}\lambda_8^{b_1'b_2},
\end{eqnarray*}

where $\lambda_i \in \mathbb C^\times$ such that $\lambda_i^p=1$.
\end{thm}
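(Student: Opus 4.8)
The plan is to exploit the direct product decomposition
$$G = \mathbb{H}_{3}(\bZ/p\bZ \times \bZ/p\bZ) \cong G_1 \times G_2,$$
where $G_1 = \langle x_1, y_1 \rangle$ and $G_2 = \langle x_2, y_2 \rangle$ are the two copies of $\mathbb{H}_{3}(\bZ/p\bZ)$ carrying the index-$1$ and index-$2$ coordinates respectively. Applying Theorem \ref{directproduct} gives
$$\Ho^2(G, \bC^\times) \cong \Ho^2(G_1, \bC^\times) \times \Ho^2(G_2, \bC^\times) \times \Hom\Big(\tfrac{G_1}{G_1'} \otimes \tfrac{G_2}{G_2'}, \bC^\times\Big).$$
Since $\Ho^2(\mathbb{H}_{3}(\bZ/p\bZ), \bC^\times) \cong (\bZ/p\bZ)^2$ and $G_i/G_i' \cong (\bZ/p\bZ)^2$, the three factors have orders $p^2$, $p^2$ and $p^4$, together accounting for the full $(\bZ/p\bZ)^8$ of Theorem \ref{SchurM}. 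The strategy is therefore to exhibit an explicit cocycle representative for each of the three factors and take their product.

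For the first two factors I would recall the description of the $2$-cocycles of $\mathbb{H}_{3}(\bZ/p\bZ)$ from \cite{PS}: up to cohomologous, a cocycle on $\langle x, y\rangle \cong \mathbb{H}_{3}(\bZ/p\bZ)$ in the coordinates $x^a y^b z^c$ has the form $\lambda^{c'a + b\binom{a'}{2} + aba'}\,\nu^{c'b + a'\binom{b}{2}}$ with $\lambda^p = \nu^p = 1$. Inflating this cocycle along the projection $G \to G_1$ (which sends all index-$2$ generators to $1$) produces a cocycle on $G$ depending only on the index-$1$ coordinates; this is exactly the $\lambda_1, \lambda_2$ part of the claimed formula. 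Running the identical argument for the projection $G \to G_2$ yields the $\lambda_3, \lambda_4$ part in the index-$2$ coordinates. The binomial corrections $\binom{a_i'}{2}$ and $\binom{b_i}{2}$ are forced by the non-abelian Heisenberg structure of each factor, precisely as in the proof of Theorem \ref{2cocycle}.

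The third factor is the genuinely new ingredient and comes from the bilinear construction underlying Theorem \ref{directproduct}. Each $f \in \Hom(G_1/G_1' \otimes G_2/G_2', \bC^\times)$ determines a $2$-cocycle $\alpha_f$ on $G$ that pairs the index-$1$ component of its \emph{second} argument against the index-$2$ component of its \emph{first}. Concretely, writing the arguments in coordinates, the index-$1$ component of $g'$ has image $a_1'\bar x_1 + b_1'\bar y_1$ in $G_1/G_1'$ and the index-$2$ component of $g$ has image $a_2\bar x_2 + b_2\bar y_2$ in $G_2/G_2'$, so that
$$\alpha_f(g, g') = f\big((a_1'\bar x_1 + b_1'\bar y_1)\otimes(a_2\bar x_2 + b_2\bar y_2)\big).$$
Expanding by bilinearity over the basis $\bar x_1\otimes \bar x_2,\ \bar x_1 \otimes \bar y_2,\ \bar y_1\otimes \bar x_2,\ \bar y_1\otimes \bar y_2$, the four values $f(\bar x_1\otimes \bar x_2),\dots,f(\bar y_1\otimes \bar y_2)$ become the free parameters $\lambda_5,\lambda_6,\lambda_7,\lambda_8$ and produce exactly the cross-terms $\lambda_5^{a_1'a_2}\lambda_6^{a_1'b_2}\lambda_7^{b_1'a_2}\lambda_8^{b_1'b_2}$ of the statement.

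Setting $\alpha$ equal to the product of these three cocycles gives the displayed formula and establishes the existence direction; since $Z^2(G,\bC^\times)$ is closed under pointwise multiplication, $\alpha$ is automatically a $2$-cocycle once each piece is. That every class is cohomologous to one of this shape, and that the eight parameters are cohomologically independent, follows because the product cocycle realizes the entire direct product decomposition above, whose total order $p^8$ matches $|\Ho^2(G,\bC^\times)|$ from Theorem \ref{SchurM}. I expect the main difficulty to be bookkeeping rather than conceptual: one must fix the coboundary representatives so that the two inflated Heisenberg cocycles appear in the exact normalized form above rather than a cohomologous variant, and choose the orientation of the bilinear pairing so that the mixed term lands on the precise cross-monomials $a_1'a_2,\,a_1'b_2,\,b_1'a_2,\,b_1'b_2$ and not their transposes — the latter would correspond to the inverse homomorphism $f^{-1}$, and hence to a different representative.
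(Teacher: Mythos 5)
Your proposal is correct and follows essentially the same route as the paper: the paper's proof consists precisely of the decomposition $\mathbb{H}_{3}(\mathbb Z/p\mathbb Z \times \mathbb Z/p\mathbb Z) \cong \mathbb{H}_{3}(\mathbb{Z}/p\mathbb{Z})\times \mathbb{H}_{3}(\mathbb{Z}/p\mathbb{Z})$ followed by a citation of a known theorem giving the cocycles of such a product. What you have done is reconstruct the content of that cited result — inflation of the two Heisenberg cocycles plus the bilinear cross-term realizing $\Hom(G_1/G_1' \otimes G_2/G_2', \mathbb C^\times)$, with the counting against Theorem \ref{SchurM} — which is exactly the argument the paper leaves to the reference.
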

\begin{proof}
	Since $\mathbb{H}_{3}(\mathbb Z/p\mathbb Z \times \mathbb Z/p\mathbb Z) \cong \mathbb{H}_{3}(\mathbb{Z}/p\mathbb{Z})\times \mathbb{H}_{3}(\mathbb{Z}/p\mathbb{Z})$,
result  follows from \cite[Theorem 9.6]{MA}.
\end{proof}

\subsection{$\mathbb{H}_{3}\big(\mathbb Z/p^2\mathbb Z  \big)$} In the following result, the element $x^ay^bz^c$ denotes the element
\[
\begin{pmatrix}
1 & b & c\\
0 & 1& a\\
0 & 0& 1
\end{pmatrix}
\in \mathbb{H}_3(\bZ/p^2\bZ).
\]
\begin{thm}
Upto cohomologous, every $2$-cocycle $\alpha$ of   $\mathbb H_3(\mathbb Z/p^2\mathbb Z)$ is of the following form:
\[
\alpha\big(x^{a}y^{b}z^{c}, x^{a'}y^{b'}z^{c'}\big)=\lambda_1^{c'a+b{a'\choose 2}+aba'}\lambda_2^{c'b+a'{b\choose 2}}, \,\, \lambda_1^{p^2}= \lambda_2^{p^2}=1.
\]
\end{thm}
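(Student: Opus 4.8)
The plan is to adapt the transgression argument of Theorem~\ref{2cocycle} to $G = \mathbb{H}_3(\bZ/p^2\bZ) = \langle x, y \mid [y,x] = z,\ x^{p^2} = y^{p^2} = 1 \rangle$ (with $z$ central of order $p^2$), using the representation group $G^\star$ of $G$ constructed in \cite{PS}. By Theorem~\ref{SchurM} we have $\Ho^2(G, \bC^\times) \cong (\bZ/p^2\bZ)^2$, so $G^\star$ sits in a stem extension $1 \to A \to G^\star \xrightarrow{\pi} G \to 1$ with $A \cong (\bZ/p^2\bZ)^2$ central; writing $z = [y,x]$ inside $G^\star$, the two extra class-three generators recorded in \cite{PS} are $t_1 = [z,x]$ and $t_2 = [z,y]$, each central of order $p^2$, so $A = \langle t_1, t_2 \rangle$. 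First I would invoke the exact sequence \eqref{exact-sequence} for this extension: it begins $1 \to \Hom(A, \bC^\times) \xrightarrow{\tra} \Ho^2(G, \bC^\times)$, so $\tra$ is injective, and since $|\Hom(A,\bC^\times)| = |A| = |\Ho^2(G,\bC^\times)| = p^4$ it is an isomorphism. Hence every class in $\Ho^2(G,\bC^\times)$ equals $\tra(f)$ for a unique $f \in \Hom(A,\bC^\times)$, and setting $\lambda_1 = f(t_1)$, $\lambda_2 = f(t_2)$ gives $\lambda_1^{p^2} = \lambda_2^{p^2} = 1$.

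Next I would fix the section $s\colon G \to G^\star$, $s(x^a y^b z^c) = x^a y^b z^c$ (in the normal form of $G^\star$), and compute the factor set $s(g)s(g')s(gg')^{-1} \in A$ by collecting the word $x^a y^b z^c x^{a'} y^{b'} z^{c'}$ into normal form inside the class-three group $G^\star$. The elementary moves are $zx = xz\,t_1$ and $zy = yz\,t_2$, together with $y^b x^{a'} = x^{a'} y^b z^{ba'} t_1^{b\binom{a'}{2}} t_2^{a'\binom{b}{2}}$ obtained from the Hall--Petrescu collection using $[z,x]=t_1$ and $[z,y]=t_2$. Carrying out the collection produces a factor set of the form $t_1^{P(g,g')} t_2^{Q(g,g')}$, where $P$ and $Q$ are integer polynomials in $a,b,c,a',b',c'$ whose terms are the binomials $b\binom{a'}{2}$ and $a'\binom{b}{2}$, the ``leading'' contributions arising from the interaction of the central generator $z$ with $x$ and $y$ through $[z,x]=t_1$ and $[z,y]=t_2$, and a cubic contribution $aba'$ from the triple interaction characteristic of class three. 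Applying $f$ then yields $\alpha = \tra(f)$ as an explicit product $\lambda_1^{P}\lambda_2^{Q}$.

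Finally I would reduce modulo coboundaries, exactly as in Step~2 of Theorem~\ref{2cocycle}: multiplying by the coboundaries of suitable $1$-cochains $g \mapsto \lambda_i^{\psi_i(g)}$, with $\psi_i$ quadratic in the coordinates of $g$, removes every term of $P$ and $Q$ that is a coboundary, leaving the claimed representative with $\lambda_1$-exponent $c'a + b\binom{a'}{2} + aba'$ and $\lambda_2$-exponent $c'b + a'\binom{b}{2}$. Since $\tra$ is an isomorphism and $|\Ho^2(G,\bC^\times)| = p^4$ by Theorem~\ref{SchurM}, the resulting $p^4$ cocycles are pairwise non-cohomologous and exhaust $\Ho^2(G,\bC^\times)$, which proves the statement. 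As a cross-check one can instead verify directly that the displayed formula satisfies the $2$-cocycle identity and that $(\lambda_1,\lambda_2) \mapsto [\alpha]$ is an injective homomorphism $(\bZ/p^2\bZ)^2 \hookrightarrow \Ho^2(G,\bC^\times)$, whence surjectivity follows from the same order count.

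The hardest part will be the commutator collection over $\bZ/p^2\bZ$. Because $x,y,z$ have order $p^2$ rather than $p$, the degree-three term $aba'$ and the binomial coefficients must be tracked modulo $p^2$, and one must check that the corrections forced by the power relations $x^{p^2} = y^{p^2} = z^{p^2} = 1$ contribute nothing spurious to $P$ and $Q$; pinning down precisely which monomials in $P$ and $Q$ are coboundaries --- so that the factor set collapses to the clean cubic representative above --- is the real content of the argument.
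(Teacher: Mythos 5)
Your proposal is correct in substance, but it takes a genuinely different route from the paper: the paper's entire proof of this theorem is a one-line citation to \cite[Lemma 2.2]{PS}, whereas you re-derive the result from scratch via the transgression attached to the stem extension $1 \to A \to G^\star \to G \to 1$, with $G^\star$ the representation group of \cite[Theorem 1.2]{PS} --- in effect transplanting Step 2 of the paper's proof of Theorem~\ref{2cocycle} to the ring $\bZ/p^2\bZ$. Your route buys self-containedness (it is presumably the argument the cited lemma itself makes); the citation buys brevity. One bookkeeping wrinkle in your execution should be flagged: with your section $s(x^ay^bz^c)=x^ay^bz^c$ and the paper's commutator conventions, collection yields the factor set $t_1^{ca'+b\binom{a'}{2}}\,t_2^{cb'+a'\binom{b}{2}+ba'b'}$, so the cubic term that actually appears is $ba'b'$ (in the $t_2$-exponent), not $aba'$, and the central generator enters through $ca'$, $cb'$ rather than $c'a$, $c'b$. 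This discrepancy cannot be erased by coboundaries at fixed $(\lambda_1,\lambda_2)$: for any $1$-cochain $\psi$ and the central element $z$ one has $\delta\psi(g,z)\,\delta\psi(z,g)^{-1}=1$, whereas the ratio of the two candidate representatives has this invariant equal to $\lambda_1^{2a}\lambda_2^{2b}\neq 1$; what is true is that your collected cocycle is cohomologous to the displayed form with parameters $(\lambda_1^{-1},\lambda_2^{-1})$. Since the theorem quantifies over all $p^2$-th roots of unity, this relabelling ($f\mapsto f^{-1}$, or an inverted sign convention for $t_1,t_2$) is harmless, but the step ``multiply by coboundaries to reach the claimed representative'' would stall if carried out literally at fixed parameters. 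Finally, note that your closing cross-check is actually the cleanest complete proof and could serve as the main argument: the displayed formula satisfies the cocycle identity (direct verification using $\binom{a'+a''}{2}=\binom{a'}{2}+\binom{a''}{2}+a'a''$), the assignment $(\lambda_1,\lambda_2)\mapsto[\alpha]$ is a homomorphism which is injective because the coboundary-invariant quantity $\alpha(g,z)\alpha(z,g)^{-1}=\lambda_1^{a}\lambda_2^{b}$ is nontrivial unless $\lambda_1=\lambda_2=1$, and surjectivity then follows from $|\Ho^2(\mathbb H_3(\bZ/p^2\bZ),\bC^\times)|=p^4$, which is Theorem~\ref{SchurM}.
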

\begin{proof}
The cocycle description  follows from \cite[Lemma 2.2]{PS}.
\end{proof}

\section{Projective representations of $\mathbb H_3(R)$}\label{Non-degenerate-cocycles}

In this section we give a construction of all projective irreducible representations of   $\hfpp$ and $\hzpp$. We also classify their non-degenerate cocycles. In particular,  Theorem~\ref{thm:number-non-degenerate} follows from this section.

\subsection{Projective representations of $\hfpp$}
\label{section:projection-function-field}

By Theorem~\ref{representation-group}, a representation group of $G = \hfpp$ is given by

\begin{eqnarray*}
	G^\star&=&\langle  x_i,y_i , i=1,2 \mid [y_1,x_1]=z_1,  [y_1,x_2]=z_2, [x_2,x_1]=w_1,[y_2,x_1]z_2^{-1}=w_2,
	\\
	&&  [y_2,x_2]=w_3,[y_2,y_1]=w_4,  [z_1, x_i]=v_i,  [z_1, y_i]=u_i, [z_2,x_1]=[z_1, x_2],   \\
	&&  [z_2,y_1]=[z_1, y_2], x_i^p=y_i^p=z_i^p=w_j^p=1, 1 \leq j \leq 4  \rangle.
\end{eqnarray*}
Consider the normal subgroup $N =\langle z_i, u_i,v_i, w_j,  i=1,2, 1\leq j \leq 4\rangle \cong (\mathbb Z/p\mathbb Z)^{10}$ of $G^\star$.
Any one dimensional ordinary representation, say $\chi$, of $N$ is given by
$$\chi(w_j)=\mu_j, 1\leq j \leq 4, \chi(u_1)=\mu_7, \chi(u_2)=\mu_6,  \chi(v_1)=\mu_5, \chi(v_2)=\mu_8, \chi(z_1) = \mu_9, \chi(z_2) = \mu_{10},$$
where $\mu_i \in \mathbb C^\times$ such that $\mu_i^p = 1$ for all $i$. 	If $\mu_6 \neq 1$, assume that $\mu_i=\mu_6^{r_i}$ for $1 \leq i \leq 8$ with $r_6 = 1$
and,  if $\mu_8 \neq 1$ assume that $\mu_i=\mu_8^{t_i}$ for $1 \leq i \leq 7$ with $t_8 = 1$. \\

Our first step is to determine the stabilizer $I_{G^\star}(\chi)$ of the character $\chi$. An element $g = x_1^{a_1}x_2^{a_2}y_1^{b_1}y_2^{b_2}n$ for  $n\in N$ satisfies $g \in I_{G^\star}(\chi)$ if and only if
$\chi([x_1^{a_1}x_2^{a_2}y_1^{b_1}y_2^{b_2}, z_1])=1$ and  $\chi([x_1^{a_1}x_2^{a_2}y_1^{b_1}y_2^{b_2}, z_2])=1$.  From the definition of $G^\star$, we observe that
\begin{eqnarray}\label{commutator}
	&&[x_1^{a_1}x_2^{a_2}y_1^{b_1}y_2^{b_2}z_1^{c_1}z_2^{c_2}, x_1^{a_1'}x_2^{a_2'}y_1^{b_1'}y_2^{b_2'}z_1^{c_1'}z_2^{c_2'}] = z_1^{a_1'b_1-a_1b_1'}z_2^{a_2'b_1-a_2b_1'+a_1'b_2-a_1b_2'}w_1^{a_1'a_2-a_1a_2'}\nonumber \\
	&&\hspace{3.6cm} w_2^{a_1'b_2-a_1b_2'}w_3^{a_2'b_2-a_2b_2'}w_4^{b_1'b_2-b_1b_2'}u_1^{a_1'{b_1\choose 2}-a_1{b_1'\choose 2}+b_1b_1'(a_1'-a_1)+(b_1'c_1-b_1c_1')}\nonumber \\
	&& \hspace{1.2cm} u_2^{a_2'{b_1\choose 2}-a_2{b_1'\choose 2}+b_1b_1'(a_2'-a_2)-a_1b_1'(b_2'+b_2)         +b_1b_2'(a_1'-a_1)+a_1'b_2(b_1+b_1')+(b_1'c_2-b_1c_2')+(b_2'c_1-b_2c_1')}\nonumber\\
	&&\hspace{1.2cm} v_1^{b_1{a_1'\choose 2}-b_1'{a_1\choose 2}+a_1'c_1-a_1c_1'}v_2^{b_2{a_1'\choose 2}-b_2'{a_1\choose 2}+(a_1'a_2'b_1-a_1a_2b_1')+(a_1'c_2-a_1c_2')+(a_2'c_1-a_2c_1')}.
\end{eqnarray}
Therefore $g \in I_{G^\star}(\chi)$ if and only if
\begin{eqnarray}\label{stabilizer}
&& \chi(v_1)^{a_1} \chi(v_2)^{a_2}\chi(u_1)^{b_1}\chi(u_2)^{b_2}=1 \text { and } \chi(v_2)^{a_1}\chi(u_2)^{b_1}=1.
\end{eqnarray}
Our next goal is to describe  $\mathrm{Irr}(I_{G^\star}(\chi) \mid \chi)$. We note that $I_{G^\star}(\chi)/N$ is abelian and $|I_{G^\star}(\chi)/N| \leq p^4$. Therefore, by Clifford theory, all representations in $\mathrm{Irr}(I_{G^\star}(\chi) \mid \chi)$ for a fixed $\chi$ will have the
same dimension. We now consider various cases: \\


$(i)$ Assume $\mu_6=\mu_8=1$.
Then by (\ref{stabilizer}),  $g \in I_{G^\star}(\chi)$ if and only if $\chi(v_1)^{a_1}\chi(u_1)^{b_1}=1$.
Hence $\left \vert \frac{ I_{G^\star}(\chi)}{N} \right \vert \in  \{p^3, p^4\}.$ We consider these cases separately.

Suppose { $\left \vert \frac{ I_{G^\star}(\chi)}{N} \right \vert = p^4$.} For this case, $I_{G^\star}(\chi) = G^\star$. by Lemma~\ref{lemma:quotient-group-bijection}, representations are determined by certain projective representations of the abelian group $G^\star/N$. Since $\vert G^\star/N \vert = p^4$, any irreducible representation of $\irr(G^\star \mid \chi)$ will be of dimension either $p$ or $p^2.$

Next suppose { $\left \vert \frac{ I_{G^\star}(\chi)}{N}\right \vert = p^3$.} Again by Lemma~\ref{lemma:quotient-group-bijection}, representations are determined by certain projecive representative representations of the abelian group $I_{G^\star}(\chi)/N$. Since $\vert G^\star/N \vert = p^4$, any irreducible representation of $\irr(G^\star \mid \chi)$ in this case will too be of dimension either $p$ or $p^2.$ \\

$(ii)$ Assume  that $\mu_6=1, \mu_8\neq1$.  Then by (\ref{stabilizer}), $g = x_1^{a_1}x_2^{a_2}y_1^{b_1}y_2^{b_2}n \in I_{G^\star} (\chi)$ if and only if  $a_1=0$ and  $a_2=-t_7b_1$. Therefore $|I_{G^\star}(\chi)/N| = p^2$ in this case. By Lemma \ref{lemma:dimension-of-abelian-representations} and the discussion before it, an irreducible representation of $\irr(I_{G^\star}(\chi) \mid \chi)$ will be of dimension one or $p$. Further it will be one dimensional if and only if $\chi$ extends to $I_{G^\star}(\chi)$. By Lemma~\ref{lemma:extension-condition}, this holds if and only if $[I_{G^\star}(\chi), I_{G^\star}(\chi)] \subseteq \mathrm{Ker}(\chi)$, that is
\begin{eqnarray*}
&& \chi\big([x_2^{-t_7b_1}y_1^{b_1}y_2^{b_2}z_1^{c_1}z_2^{c_2}, x_2^{-t_7b_1'}y_1^{b_1'}y_2^{b_2'}z_1^{c_1'}z_2^{c_2'}]\big)=1.
\end{eqnarray*}
By using (\ref{commutator}), this is equivalent to
\begin{eqnarray*}
 \chi(v_2)^{(t_3t_7-t_4)(b_1b_2'-b_1'b_2)}=1,
\end{eqnarray*}
where $b_1, b_2, b_1', b_2'$ are arbitrary. So we must have $t_4=t_3t_7$.

This discussion altogether implies that for $t_4 = t_3t_7$, $\irr(G^\star \mid \chi)$ consists of $p^2$ dimensional irreducible representations and for $t_4\neq t_3t_7$, all representations of $\irr(G^\star \mid \chi)$ are of dimension $p^3$.\\

$(iii)$ Similar to $(ii),$ in this case  any representation of $\irr(G^\star \mid  \chi )$ is of dimension $p^2$ if $r_1 = -r_3r_5$ and is of dimension $p^3$ if $r_1\neq -r_3r_5$. \\

%
%
%

$(iv)$  Assume  that $\mu_6\neq 1, \mu_8\neq 1$.
Then by (\ref{stabilizer}), $g = x_1^{a_1}x_2^{a_2}y_1^{b_1}y_2^{b_2}n \in I_{G^\star} (\chi)$ if and only if
$b_1=-r_8a_1, b_2=-r_8a_2+(r_7r_8-r_5)a_1$. Therefore $|I_{G^\star}(\chi)/N| = p^2$ in this case. As earlier, we only need to answer whether $\chi$ extends to $I_{G^\star}(\chi)$ or not.
Now $\chi$ extends to $I_{G^\star}(\chi)$ if and only if
$$\chi\big([x_1^{a_1}x_2^{a_2}y_1^{b_1}y_2^{b_2}z_1^{c_1}z_2^{c_2}, x_1^{a_1'}x_2^{a_2'}y_1^{b_1'}y_2^{b_2'}z_1^{c_1'}z_2^{c_2'}] \big)= 1.$$ By using  \eqref{commutator}
 and substituting the values $\chi(w_i), \chi(v_j)$ for $1 \leq i \leq 4, 1\leq j \leq 2$ in terms of $\chi(u_2)$ we get,
 $$\chi(u_2)^{(a_1'a_2-a_1a_2')\big(r_1-r_2r_8-r_3(r_7r_8-r_5)+r_4r_8^2-\frac{r_8}{2}+\frac{r_8^2}{2}\big)}=1.
$$
Since $a_1, a_1', a_2,a_2'$ are arbitrary, we must have
\begin{eqnarray}
\label{eq:long-degenarte-condition}
r_1-r_2r_8-r_3(r_7r_8-r_5)+r_4r_8^2+{r_8\choose 2}=0.
\end{eqnarray}
As earlier, this implies that $\irr(G^\star \mid \chi)$ consists of $p^2$ dimensional irreducible representations if (\ref{eq:long-degenarte-condition}) holds and of dimension $p^3$ otherwise. This completes our discussion regarding the irreducible representations of $G^\star$.

\subsection{Non-degenerate 2-cocycles of $\hfpp$}\label{nondegco1}In this section we describe the degenerate and non-degenerate cocycles of  $\hfpp$. Recall that upto cohomologous every $2$-cocycle $\alpha$ of $\hfpp$  is given by
\begin{eqnarray}
	\label{2-cocycle-heisenberg-function}
	&&\alpha\big(x_1^{a_1}x_2^{a_2}y_1^{b_1}y_2^{b_2}z_1^{c_1}z_2^{c_2}, x_1^{a_1'}x_2^{a_2'}y_1^{b_1'}y_2^{b_2'}z_1^{c_1'}z_2^{c_2'}\big) =\mu_1^{a_2a_1'} \mu_2^{b_2a_1'}\mu_3^{b_2a_2'} \mu_4^{b_2 b_1'}  \mu_5^{b_1{a_1' \choose 2}+a_1'c_1} \nonumber \\
	&&\hspace{5cm}  \mu_6^{b_2{a_1' \choose 2}+a_1'c_2+a_2'c_1} \mu_7^{ a_1' {b_1\choose 2}-b_1c_1'} \mu_8^{a_2'{b_1\choose 2}-b_2c_1'-b_1c_2'},
\end{eqnarray}
where  $\mu_i \in \bC^\times$ such that $\mu_i^p=1$ for all $i$. Following the notations of Section~\ref{section:projection-function-field}, if $\mu_6 \neq 1$, we assume that $\mu_i=\mu_6^{r_i}$ for $1 \leq i \leq 8$ with $r_6 = 1$
and,  if $\mu_8 \neq 1$ we assume that $\mu_i=\mu_8^{t_i}$ for $1 \leq i \leq 7$ with $t_8 = 1$. Let $G^\star, N, \chi$ be as given in the Section~\ref{section:projection-function-field}.

\begin{thm}\label{nondegenerate}
	The cocycle $\alpha$ as given in \eqref{2-cocycle-heisenberg-function} is  non-degenerate if and only if one of the following holds.
	\begin{enumerate}[label=(\roman*)]		
		\item   $\mu_6=1,$ $ \mu_8 \neq 1$ and $t_4 \neq  t_3t_7$.
		\item     $\mu_8=1$, $\mu_6 \neq 1$ and  $r_1 \neq -r_3r_5$.
		\item    $\mu_8 \neq 1, \mu_6 \neq 1$ and $r_1-r_2r_8-r_3(r_7r_8-r_5)+r_4r_8^2+{r_8\choose 2} \neq 0$.
	\end{enumerate}
In particular, the number of non-degenerate cocycles of  $\hfpp$ is   $p^5(p-1)^2(p+1)$.
\end{thm}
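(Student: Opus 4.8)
The plan is to combine Lemma~\ref{lemma:non-degenerate-cocycles} with the dimension computations already carried out in Section~\ref{section:projection-function-field}, and then to count. A cocycle $\alpha$ as in \eqref{2-cocycle-heisenberg-function} corresponds, via the transgression map, to a linear character of the central subgroup $A = \langle u_i, v_i, w_j\rangle \cong \Ho^2(G,\mathbb C^\times)$; extending it arbitrarily to $N$ by prescribing $\chi(z_1), \chi(z_2)$ gives the character $\chi$ of Section~\ref{section:projection-function-field}. Since $G = \hfpp$ has order $p^6$, Lemma~\ref{lemma:non-degenerate-cocycles} says that $\alpha$ is non-degenerate if and only if some $\rho \in \mathrm{Irr}(G^\star \mid \chi)$ satisfies $\dim\rho = p^3$. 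I would first note that the dimensions found in Section~\ref{section:projection-function-field} are expressed purely in terms of $\mu_1,\dots,\mu_8$; this reflects the fact, visible from \eqref{commutator}, that for $g,g'$ in the inertia group $I_{G^\star}(\chi)$ the $z_1$- and $z_2$-exponents of $[g,g']$ both vanish, so the extension criterion of Lemma~\ref{lemma:extension-condition} never involves $\chi(z_1), \chi(z_2)$. Thus non-degeneracy is a well-defined function of $[\alpha]$ alone.

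With this in hand, the characterization is a direct transcription of the four cases of Section~\ref{section:projection-function-field}, organised by whether $\mu_6$ and $\mu_8$ are trivial. When $\mu_6 = \mu_8 = 1$ every constituent of $\mathrm{Irr}(G^\star \mid \chi)$ has dimension $p$ or $p^2$, so no $p^3$-dimensional representation appears and every such $\alpha$ is degenerate; this accounts for the absence of this case from the statement. The remaining three cases produce a $p^3$-dimensional representation precisely under the stated inequalities: $t_4 \neq t_3 t_7$ when $\mu_6 = 1,\ \mu_8 \neq 1$; $r_1 \neq -r_3 r_5$ when $\mu_8 = 1,\ \mu_6 \neq 1$; and the failure of \eqref{eq:long-degenarte-condition} when $\mu_6 \neq 1,\ \mu_8 \neq 1$. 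This is exactly clauses (i)--(iii), and establishes the ``if and only if''.

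For the count I would set $\mu_i = \zeta^{m_i}$ with $\zeta$ a fixed primitive $p$-th root of unity and $m_i \in \mathbb F_p$, turning each condition into a polynomial (in)equality over $\mathbb F_p$. In the case $\mu_6=1,\ \mu_8\neq 1$, clearing denominators sends $t_4 \neq t_3 t_7$ to $m_4 m_8 \neq m_3 m_7$; with $m_6 = 0$, $m_8 \in \mathbb F_p^\times$ and $m_1, m_2, m_5$ free, and noting that for fixed $m_8 \neq 0$ exactly $p^2$ of the $p^3$ triples $(m_3, m_4, m_7)$ meet $m_4 m_8 = m_3 m_7$, this case gives $(p-1)\,p^3\,(p^3-p^2) = p^5(p-1)^2$. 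The case $\mu_8 = 1,\ \mu_6 \neq 1$ is entirely analogous and contributes another $p^5(p-1)^2$. In the case $\mu_6 \neq 1,\ \mu_8 \neq 1$, the left side of \eqref{eq:long-degenarte-condition} is affine-linear in $r_1$ with unit leading coefficient, so for each of the $p^5$ choices of the remaining coordinates exactly one value makes it vanish; together with $m_6, m_8 \in \mathbb F_p^\times$ this contributes $(p-1)^2\,(p^6 - p^5) = p^5(p-1)^3$. Adding the three gives $p^5(p-1)^2\big[1 + 1 + (p-1)\big] = p^5(p-1)^2(p+1)$, as claimed.

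The genuinely new content of this theorem is modest: the dimension computations are all imported from Section~\ref{section:projection-function-field}, so the work is the bookkeeping of the first paragraph together with the elementary counting. The step I expect to require the most care is pinning down the parameter-to-character dictionary exactly as used in Section~\ref{section:projection-function-field}---in particular the roles assigned to $u_2$ and $v_2$---and confirming that $\chi(z_1), \chi(z_2)$ drop out, so that the three cases are disjoint and jointly exhaust the non-degenerate cocycles without double counting.
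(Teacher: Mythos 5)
Your proposal is correct and takes essentially the same route as the paper: the paper's (three-sentence) proof likewise applies Lemma~\ref{lemma:non-degenerate-cocycles} to the stem extension $1 \to A \to G^\star \to G \to 1$ with $A=\langle u_i,v_i,w_j\rangle$ and $\tra(\chi|_{A})=[\alpha]$, and reads off the three cases from the dimension analysis of Section~\ref{section:projection-function-field}. The details the paper leaves implicit --- that the choice of $\chi(z_1),\chi(z_2)$ extending $\chi|_A$ to $N$ does not affect whether a $p^3$-dimensional constituent exists, and the count $2\cdot p^5(p-1)^2+p^5(p-1)^3=p^5(p-1)^2(p+1)$ --- are exactly what you supply, and your computations are correct; your caution about the $u_2$/$v_2$ dictionary is also well placed, since the labelling in Section~\ref{section:projection-function-field} must be matched against the transgression computation in the proof of Theorem~\ref{2cocycle} for $\tra(\chi|_A)=[\alpha]$ to hold.
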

\begin{proof} Let $A =\langle  u_i,v_i, w_j,  i=1,2, 1\leq j \leq 4\rangle \cong (\mathbb Z/p\mathbb Z)^{8}$ be a subgroup of $G^\star$. Then $A$ is a central subgroup of $G^\star$ such that $1 \rightarrow A \rightarrow G^\star \rightarrow G \rightarrow 1$ is a stem extension. For this case, we note that $\tra(\chi|_{A})=[\alpha]$. This fact, Lemma~\ref{lemma:non-degenerate-cocycles} and Section~\ref{section:projection-function-field} give the result.
	\end{proof}

\begin{remark} We remark that the existence of $p^5$ number of non-degenerate cocycles out of  the above $p^5(p-1)^2(p+1)$ has already appeared in \cite[Theorem A]{Ginosar}.


\end{remark}

\subsection{Projective representations of $\mathbb{H}_{3}\big(\mathbb Z/p^2\mathbb Z \big)$} In this section we first give a construction of projective representations of $G = \mathbb H_3(\mathbb Z/p^2\mathbb Z)$. As mentioned earlier, it is enough to give a construction of all ordinary irreducible representations of a representation group of $G$.
From \cite[Theorem 1.2]{PS},
$$
G^\star=\langle x,y\mid [y,x]=z, [z,x]=z_1, [z,y]=z_2, x^{p^2}=y^{p^2}=z^{p^2}=1 \rangle
$$
is a representation group of $G$. Note that,  $x^{a} y^{b} z^{c},  x^{a'} y^{b'} z^{c'}\in G^\star$ satisfy $$
[x^{a} y^{b} z^{c},  x^{a'} y^{b'} z^{c'}]=z^{a'b-ab'}z_1^{b{a'\choose 2}-b'{a\choose 2}+(a'c-ac')}z_2^{a'{b\choose 2}-a{b'\choose 2}+(b'c-bc')}.
$$

Consider $N=\langle z,z_1,z_2\rangle \cong (\mathbb Z/p^2\mathbb Z)^3$, a normal subgroup of $G^\star$.
Any character $\chi:N \to \mathbb C^\times$ is given by  $\chi(z_i)=\lambda_i$ for $i=1,2$ and $\chi(z) = \lambda$, where $\lambda_i^{p^2} = 1$  for $i = 1,2$ and $\lambda^{p^2} = 1$. The inertia group $S_\chi = I_{G^\star}(\chi)$ of $\chi$  consists of the elements $x^ay^bn, n \in N$ such that $\chi([x^ay^b,z])=1$, that is $\chi(z_1)^{a}\chi(z_2)^b=1$.
We consider the various cases of $\lambda_i'$s. \\

(i) Assume $\lambda_1 = \lambda_2 = 1$. In this case $S_\chi = G^\star$. We note that $|S_{\chi}/N| = |G^\star|/|N| = p^4$, $S_\chi/N$ is abelian. Hence by Clifford theory, depending on $\chi(z)$, $\irr(G^\star \mid \chi)$ consists of representations of either $1$, $p$ or $p^2$ dimension. \\

(ii) Assume $\lambda_1=1,\lambda_2\neq 1$ such that $\lambda_2^p = 1$. Then  $S_\chi=\langle N, x, y^{p} \rangle$. Therefore $\vert S_\chi/N\vert = p^3$ and $S_\chi/N$ is abelian. Hence $\irr(G^\star \mid \chi)$ consists of $p$-dimensional representations if $\chi$ extends to $S_\chi$ and if $\chi$ does not extend to $S_\chi$ then  $\irr(G^\star \mid \chi)$ consists to $p^2$ dimensional representations. We use Lemma~\ref{lemma:extension-condition} and  $\chi(z_2)^{pt}=\chi(z_2)^{pt_1}=1$ to observe that $\chi$ extends to $S_\chi$ if and only if
$$
\chi([x^{a} y^{pt} z^{c},  x^{a'} y^{pt_1} z^{c'}])=\chi(z)^{p(ta'-t_1a)}=1.
$$
Therefore $\chi$ extends if and only if $\lambda^p = 1$. \\

(iii) Assume $\lambda_1=1$ and order of $\lambda_2$ is $p^2$. In this case, we obtain $S_\chi = \langle N, x \rangle$. Since $S_\chi/N$ is cyclic of order $p^2$.  By Clifford theory, $\irr(G^\star\mid \chi) $ consists of $p^2$ dimensional irreducible representations. \\

(iv) For $\lambda_2 = 1$ and $\lambda_1 \neq 1$, we proceed as above. \\

(iv) Suppose $\lambda _1 \neq 1,\lambda_2 \neq 1$ and $\lambda_1^p = \lambda_2^p = 1$.  Assume that $\chi(z_1)=\chi(z_2)^r$ for some $1\leq r< p-1$. For the inertia group computations,  $a$ and $b$ are such that $\chi(z_1)^a\chi(z_2)^b=1$, that is $\chi(z_2)^{b+ra}=1$. This implies
$b \in \{-ra+pt \mid 0\leq t \leq p-1 \}$. Therefore $S_\chi=\langle N, x^ay^{-ra+pt}\rangle $. So $\vert S_\chi/N \vert = p^3$ and $S_\chi/N$ is abelian. As in (ii),  $\irr(G^\star \mid \chi)$ consists of $p$-dimensional representations if $\chi$ extends to $S_\chi$ and if $\chi$ does not extend to $S_\chi$ then  $\irr(G^\star \mid \chi)$ consists to $p^2$ dimensional representations. So it remains to determine when the conditions for which $\chi$ extend to $S_\chi$.
By Lemma~\ref{lemma:extension-condition}, $\chi$ extends to $S_\chi$ if and only if $\chi([x^{a} y^{-ra+pt} z^{c},  x^{a'} y^{-ra'+pt'} z^{c'}])=1$. This is equivalent to
\[
\chi(z)^{p(a't-at')} \chi(z_2)^{r^2aa'(a-a')} = 1,
\]
where $a, a', t, t'$ are arbitrary. Therefore, we must have $\lambda_2^{r^2} = 1$ and $\chi(z)^p = 1$.  This implies that $r$ is a multiple of $p$. This in turn gives that $\lambda_1 = \lambda_2^r = 1$, a contradiction. Hence $\chi$ does not extend to $S_\chi$ in this case. Therefore $\irr(G^\star \mid \chi)$ consists to $p^2$ dimensional representations. \\

(v) Assume that $\lambda_1 \neq 1$ and $\lambda_2$ is of order $p^2$. Assume that $\lambda_1 = (\lambda_2)^r$. It is easy to see that $S_\chi = \langle N, x^{a} y^{-ra} \rangle$, and therefore  $S_\chi/N$ is an abelian group of order $p^2$. As in (iv), we obtain that $\chi$ extends to  $S_\chi$ if and only if $\lambda_1^p = 1$. Hence in this case $\irr(G^\star \mid \chi)$ consists of $p^2$-dimensional representations. If both $\lambda_1$ and $\lambda_2$ are of order $p^2$, then  $\irr(G^\star \mid \chi)$ consists of $p^3$-dimensional representations. \\

(vi) Assume that order of $\lambda_1$ is $p^2$ and that of $\lambda_2$ is $p$. This case can be done parallel to (iv). Here we obtain that $\irr(G^\star \mid \chi)$ consists of $p^2$ dimensional representations.

\subsection{Non-degenerate 2-cocycles of $\mathbb{H}_{3}\big(\mathbb Z/p^2\mathbb Z \big)$}\label{nondegco2}
In this section we describe the non-degenerate cocycles of the group $\mathbb H_3(\mathbb Z/p^2\mathbb Z)$.
\begin{thm}
The cocycle $\alpha$ as given in \eqref{cocycledescription} is  non-degenerate if and only if $\lambda_1$ and $\lambda_2$ are of order $p^2$. In particular, the number of non-degenerate cocycles is $(p-1)^2 p^2$.
\end{thm}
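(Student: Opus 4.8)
The plan is to deduce this directly from Lemma~\ref{lemma:non-degenerate-cocycles} together with the dimension computations carried out for $\irr(G^\star \mid \chi)$ in the previous subsection, exactly as was done for $\hfpp$ in the proof of Theorem~\ref{nondegenerate}. First I would single out the central subgroup $A = \langle z_1, z_2 \rangle \cong (\bZ/p^2\bZ)^2$ of $G^\star$; since $A \cong \Ho^2(G,\bC^\times)$ and $A \subseteq Z(G^\star) \cap (G^\star)'$, the sequence $1 \to A \to G^\star \to G \to 1$ is the stem extension required by Lemma~\ref{lemma:non-degenerate-cocycles}. As $|G| = (p^2)^3 = p^6$, the relevant threshold dimension is $\sqrt{|G|} = p^3$, so by parts (1) and (4) of that lemma the cocycle $\alpha$ is non-degenerate if and only if some $\rho \in \irr(G^\star \mid \chi)$ has $\dim(\rho) = p^3$, where $\chi \in \Hom(A,\bC^\times)$ satisfies $\tra(\chi) = [\alpha]$.

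Next I would pin down the correspondence between $\chi$ and the parameters $\lambda_1, \lambda_2$ of the cocycle. Using a section of $G^\star \to G$ and the explicit commutator formula for $G^\star$ recorded above, a short transgression computation (in the spirit of Step~2 in the proof of Theorem~\ref{2cocycle}) shows that $\tra(\chi|_A) = [\alpha]$ precisely when $\chi(z_1) = \lambda_1$ and $\chi(z_2) = \lambda_2$, matching the two parameters appearing in the cocycle formula. With this identification in hand, I would invoke the case-by-case dimension analysis from the previous subsection. There the irreducible constituents of $\irr(G^\star \mid \chi)$ were computed for a character $\chi$ of $N = \langle z, z_1, z_2 \rangle$, and the value $p^3$ was attained in exactly one situation, namely when both $\lambda_1 = \chi(z_1)$ and $\lambda_2 = \chi(z_2)$ have order $p^2$; in every other case the maximal occurring dimension is at most $p^2$. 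Hence $\alpha$ is non-degenerate if and only if both $\lambda_1$ and $\lambda_2$ have order $p^2$.

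Finally, the count follows by enumerating such classes: a parameter $\lambda_i$ of order $p^2$ ranges over the $\phi(p^2) = p(p-1)$ primitive $p^2$-th roots of unity, and since distinct pairs $(\lambda_1,\lambda_2)$ yield distinct cohomology classes (the cocycle description sets up a bijection between these pairs and $\Ho^2(G,\bC^\times) \cong (\bZ/p^2\bZ)^2$), we obtain $|X(G)| = (p(p-1))^2 = p^2(p-1)^2$. I expect the main obstacle to be the bookkeeping that reconciles the two groups used: the dimension analysis is phrased in terms of characters of $N = \langle z, z_1, z_2\rangle$, whereas non-degeneracy via Lemma~\ref{lemma:non-degenerate-cocycles} is phrased in terms of characters of the stem subgroup $A = \langle z_1, z_2\rangle$. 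One must check that every $\rho$ lying above $\chi|_A$ restricts on $N$ to characters extending $\chi|_A$, so that the attained dimensions are indeed governed by the $N$-analysis, and that this dimension depends only on the orders of $\lambda_1$ and $\lambda_2$ and not on the auxiliary value $\chi(z)$; granting this, both the stated equivalence and the final count follow.
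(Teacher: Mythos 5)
Your proposal is correct and takes essentially the same route as the paper: the paper likewise uses the stem extension $1 \to \langle z_1, z_2\rangle \to G^\star \to G \to 1$, notes that $\tra(\chi|_{\langle z_1,z_2\rangle}) = [\alpha]$ with $\chi(z_i) = \lambda_i$, and combines Lemma~\ref{lemma:non-degenerate-cocycles} with the case-by-case dimension analysis of $\irr(G^\star \mid \chi)$ from the preceding subsection, where dimension $p^3 = \sqrt{|G|}$ occurs exactly when both $\lambda_1$ and $\lambda_2$ have order $p^2$, yielding the count $(p^2-p)^2 = p^2(p-1)^2$. The bookkeeping issue you flag (characters of $N = \langle z, z_1, z_2\rangle$ versus characters of the stem kernel) is resolved in the paper exactly as you suggest, by restricting characters of $N$ to $\langle z_1, z_2\rangle$ and observing that the attained dimensions do not depend on the auxiliary value $\chi(z)$.
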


\begin{proof}

Recall that every $2$-cocycle  of   $G = \mathbb H_3(\mathbb Z/p^2\mathbb Z)$, upto cohomologous, is given by
\begin{eqnarray}\label{cocycledescription}
\alpha\big(x^{a_1}y^{b_1}z^{c_1}, x^{a_1'}y^{b_1'}z^{c_1'}\big)=\lambda_1^{c_1'a_1+b_1{a_1'\choose 2}+a_1b_1a_1'}\lambda_2^{c_1'b_1+a_1'{b_1\choose 2}},  \lambda_1^{p^2}= \lambda_2^{p^2}=1.
\end{eqnarray}
Let $Z $ be a subgroup of $G^\star$ generated by $z_1$ and $z_2$. Then $Z \subseteq N$ and $1 \rightarrow Z \rightarrow G^\star \rightarrow G \rightarrow 1$ is a stem extension. For any character $\chi$ of $N$ as given above, let $\chi|_Z$ denote its restriction to the group $Z$. We note that $\tra(\chi|_{Z})=[\alpha]$. The result now follows from above description of ordinary irreducible representations of $G^\star$, Theorem~\ref{inf} and Lemma~\ref{lemma:non-degenerate-cocycles}.
\end{proof}
\noindent {\bf Acknowldgement:}
 SH acknowledges the financial support received from IISc in the form of Raman postdoctoral fellowship (R(IA)/CVR-PDF/2020/2700). EKN thanks SERB, India for the financial support through MATRICS grant MTR/2018/000501. PS thanks SERB, India and MHRD, India for the financial support through MATRICS grant \\
 MTR/2018/000094 and MHRD-SPARC grant (SPARC/2018-
 2019/P88/SL).
\bibliographystyle{amsplain}
\bibliography{Heisenberg-over-rings}

\end{document}